\DeclareMathAlphabet{\mathscrbf}{OMS}{mdugm}{b}{n}
\tikzset{
  no line/.style={draw=none,
    commutative diagrams/every label/.append style={/tikz/auto=false}},
  from/.style args={#1 to #2}{to path={(#1)--(#2)\tikztonodes}}}
\title[Moduli of oriented formal groups and cellular motivic spectra]{
Moduli stack of oriented formal groups\\ and cellular  motivic spectra over $\mathbf C$
}
\author{Rok Gregoric}
\thanks{University of Texas at Austin}
\date{\today}
\subjclass[2020]{55N22, 14F42, 14A30}
\address{Department of Mathematics, University of Texas at Austin, Austin, TX 78712, USA}
\email{gregoric@math.utexas.edu}
\newtheorem{theorem}{Theorem}[subsection]
\newtheorem{theoremm}{Theorem}
\newtheorem{varianthm}[theorem]{Variant}
\newtheorem{corollary}[theorem]{Corollary}
\newtheorem{lemma}[theorem]{Lemma}
\newtheorem{prop}[theorem]{Proposition}
\theoremstyle{definition}
\newtheorem{definition}[theorem]{Definition}
\newtheorem{cons}[theorem]{Construction}
\newtheorem{ex}[theorem]{Examples}
\newtheorem{exun}[theorem]{Example}
\newtheorem{remark}[theorem]{Remark}
\newtheorem{remmark}[theoremm]{Remark}
\newcommand*{\C}{{\mathbf C}}
\newcommand*{\Cat}{\mathcal C\mathrm{at}_\infty}
\newcommand*{\CAlg}{{\operatorname{CAlg}}}
\newcommand*{\CAlgcn}{{\operatorname{CAlg^{cn}}}}
\newcommand*{\mC}{\mathcal C}
\newcommand*{\mD}{\mathcal D}
\newcommand*{\mL}{\mathcal L}
\newcommand*{\mX}{\mathcal X}
\newcommand*{\mY}{\mathcal Y}
\newcommand*{\mS}{\mathcal S}
\newcommand*{\sL}{\mathscr L}
\newcommand*{\sA}{\mathscr A}
\newcommand*{\sO}{\mathcal O}
\newcommand*{\sF}{\mathscr F}
\newcommand*{\E}{\mathbb E_\infty}
\newcommand*{\heart}{\heartsuit}
\newcommand*{\sheafhom}{\mathscr{H}\kern -.5pt om}
\DeclareMathOperator{\Novak}{\mathscr{N}\text{\kern -3pt {\calligra\large ovak}}\,\,}
\DeclareMathOperator{\fHom}{\mathscr{H}\text{\kern -3pt {\calligra\large om}}\,}
\DeclareMathOperator{\id}{\operatorname{id}}
\DeclareMathOperator{\Spfil}{\mathrm{Sp^{fil}}}
\DeclareMathOperator{\Sp}{\operatorname{Sp}}
\DeclareMathOperator{\Fun}{\operatorname{Fun}}
\DeclareMathOperator{\Spec}{\operatorname{Spec}}
\DeclareMathOperator{\Map}{\operatorname{Map}}
\DeclareMathOperator{\QCoh}{\operatorname{QCoh}}
\DeclareMathOperator{\IndCoh}{\operatorname{IndCoh}}
\DeclareMathOperator{\Ind}{\operatorname{Ind}}
\DeclareMathOperator{\Tot}{\operatorname{Tot}}
\DeclareMathOperator{\fib}{\operatorname{fib}}
\DeclareMathOperator{\cofib}{\operatorname{cofib}}
\DeclareMathOperator{\Sym}{\operatorname{Sym}}
\DeclareMathOperator{\MP}{\mathrm{MP}}
\DeclareMathOperator{\M}{\mathcal M^\mathrm{or}_\mathrm{FG}}
\DeclareMathOperator{\Mo}{\mathcal M^\heart_\mathrm{FG}}
\DeclareMathOperator{\Mn}{\mathcal M^{\mathrm{or}, \le n}_\mathrm{FG}}
\DeclareMathOperator{\GL}{\operatorname{GL}}
\DeclareMathOperator{\G}{\mathbf G}
\DeclareMathOperator{\Z}{\mathbf Z}
\DeclareMathOperator{\A}{\mathbf A}
\DeclareMathOperator{\Mod}{\operatorname{Mod}}
\renewcommand{\i}{\infty}
\newcommand{\w}{\widehat}
\renewcommand{\i}{\infty}
\renewcommand{\Mn}{\mathcal M^{\mathrm{or}, \le n}_\mathrm{FG}}
\DeclareFontFamily{U}{matha}{\hyphenchar\font45}
\DeclareFontShape{U}{matha}{m}{n}{
      <5> <6> <7> <8> <9> <10> gen * matha
      <10.95> matha10 <12> <14.4> <17.28> <20.74> <24.88> matha12
      }{}
\DeclareSymbolFont{matha}{U}{matha}{m}{n}
\DeclareMathSymbol{\varsubset}{3}{matha}{"80}
\renewcommand{\i}{\infty}
\renewcommand{\o}{\otimes}
\begin{document}

\begin{abstract}
We exhibit a relationship between motivic homotopy theory and spectral algebraic geometry, based on the motivic
 $\tau$-deformation picture of Gheorghe, Isaksen, Wang, Xu.
More precisely, we identify cellular motivic spectra over $\mathbf C$ with  ind-coherent sheaves (in a slighly non-standard sense) on a certain spectral stack $\tau_{\ge 0}(\M)$. The latter is the connective cover of the non-connective spectral stack $\M$, the moduli stack of oriented formal groups, which we have introduced previously and studied in connection with chromatic homotopy theory. We also provide a geometric origin on the level of stacks for the observed $\tau$-deformation behavior on the level of sheaves, based on a notion of extended effective Cartier divisors in spectral algebraic geometry.
\end{abstract}
\maketitle

\section*{Introduction}

Motivic homotopy theory emerged around the dawn of the new millennium in the work of Morel and Voevodsky, e.g.\ \cite{Morel-Voevodsky}, \cite{Voevodsky1} \cite{Morel}, as well as Dundas, Hopkins, Levine, \O stv\ae r, R\"ondigs, Spitzweck, and others, as a setting where tools and ideas from homotopy theory could be applied to algebro-geometric objects.
It played a key role in solving a number of long-standing open questions in algebraic K-theory \cite{VoeK1}, \cite{VoeK2}.

Working over a fixed ground field $k$, motivic spaces are defined as certain kinds of space-valued sheaves on the site of smooth $k$-schemes, and motivic  spectra are obtained from (pointed) motivic spaces by inverting the operation of smashing with the projective line $\mathbf P^1_k$, viewed as an algebro-geometric analogue of the circle. It is hence clear that motivic spaces and motivic spectra receive canonical maps from usual spaces and spectra respectively. In the stable case, and for the ground field $k$ algebraically closed of characteristic zero, this turns out to be a fully faithful embedding by \cite[Theorem 1]{Levine}. We may therefore view motivic spectra as an algebro-geometric enlargement of usual homotopy-theoretic spectra.
Motivic homotopy theory is generally much more complicated than its classical non-motivic counterpart.
It is consequently quite surprising that the extra structure, present in the motivic setting, can in some cases be harnessed to aid in computations regarding classical homotopy theory. This is profitably exploited, with $k=\mathbf C$, by some of the most successful recent computational advances in understanding the (classical, i.e.\ non-motivic) stable homotopy groups of spheres \cite{Stable stems}, \cite{More stable stems}. 

These computational strides have inspired a substantial body of work, providing  conceptual explanations of the phenomena which underlie this uncanny success: \cite{Chow t}, \cite{DI},  \cite{GIKR}, \cite{GWX}, \cite{Krause} \cite{Pstragowski} to name but a few. Let us sketch the picture that emerged, which we will call the \textit{$\tau$-deformation picture},  also often known under the name \textit{cofiber of $\tau$ philosophy} in the literature.
 
 First we must fix the notation and terminology regarding motivic stable homotopy theory.
Let $\Sp_\C$ denote the motivic stable category, i.e.\ the $\i$-category of motivic spectra over the field of complex numbers $\C$. A characteristic aspect of motivic stable homotopy theory is that spheres in it are naturally bi-graded spheres $S^{t,w}$, with a \textit{topological degree} $t\in\Z$, and \textit{weight} $w\in \Z$. Under this indexing convention, the projective line $\mathbf P^1_{\mathbf C}$ coincides with the sphere $S^{2,1}$. For $t=w=0$, in which case the corresponding sphere is the unit for the smash product of motivic spectra, we prefer  the special notation $S_{\C} :=S^{0,0}$. The full subcategory $\Sp_\C^\mathrm{cell}\subseteq\Sp_\C$, generated by the spheres $S^{t,w}$ under colimits, is called \textit{cellular} (or \textit{Tate}) \textit{motivic spectra}. The association $X\mapsto X(\C)$, sending a smooth $\C$-scheme to the underlying homotopy type of the corresponding complex manifold, induces the \textit{Betti realization} functor $\Sp_{\C}\to\Sp$. This realization functor sends $S^{t,w}\mapsto S^t$, associating to motivic spheres the corresponding ordinary spheres.

With these conventions in place, the $\tau$-deformation picture may be encapsulated in the following statement:

\begin{theoremm}[{\cite{Gheorghe}, \cite{GWX}}]\label{The philosophy}
Everything in the following is implicitly $p$-completed, for an arbitrary prime $p$.
There exists a distinguished element $\tau\in \pi_{0, -1}(S_{\C})$ such that:
\begin{enumerate}[label =\textup{(\roman*)}]
\item \label{philun}
The Betti realization functor $\Sp_{\C}\to\Sp$ sends $\tau$ to the unit $1\in \Z\simeq \pi_0(S)$, and induces an equivalence of symmetric monoidal $\i$-categories
$$
(\Sp_{\C}^\mathrm{cell})^{\mathrm{Loc}(\tau)}\simeq \Sp
$$
between $\tau$-local cellular motivic spectra and the ordinary $\i$-category of spectra.

\item \label{phildeux}
The cofiber $S_{\C}/\tau = \cofib (S^{0, -1}\xrightarrow{\tau} S_\C)$ carries a canonical $\E$-algebra structure. There is a canonical equivalence of  symmetric monoidal $\i$-categories
$$
\Mod_{S_{\C}/\tau}(\Sp^\mathrm{cell}_{\C})\simeq \IndCoh(\Mo)
$$
of cellular motivic $S_{\C}/\tau$-module spectra and ind-coherent sheaves on the classical algebraic stack of formal groups $\Mo$.
\end{enumerate}
\end{theoremm}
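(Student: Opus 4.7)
The plan is to establish both parts via the motivic cobordism spectrum $\MU_\C$ and its comparison with classical $\MU$ under Betti realization. The element $\tau \in \pi_{0,-1}(S_\C)$ is pinned down by Morel's computation of the zero-stem in weight $-1$: in the $p$-completed setting there is a canonical generator whose Betti realization is $1 \in \pi_0(S)$. Equivalently, one may define $\tau$ as the lift along the Hurewicz map of the tautological generator of $\pi_{0,-1}(H\mathbf{F}_p) \cong \mu_p(\C)$, and then check by inspection that realization sends this class to the identity of $S^0$.

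For part \ref{philun}, I would combine Levine's fully faithful inclusion $\Sp \hookrightarrow \Sp_\C^{\mathrm{cell}}$ (available $p$-completely) with the claim that its essential image consists precisely of the $\tau$-periodic cellular motivic spectra. Realization is the left adjoint, so what needs checking is that $\tau$-periodization reproduces it. The key observation is that after inverting $\tau$, the map $S^{0,-1} \to S_\C$ becomes an equivalence and weight shifts collapse, so that $S^{t,w}$ is canonically identified with $S^{t,0}$. The $\tau$-inverted category is then generated under colimits by the $S^{t,0}$'s with the morphism spaces one has in ordinary spectra, yielding the desired equivalence.

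For part \ref{phildeux}, I would proceed in two steps. First, endow $S_\C/\tau$ with an $\E$-algebra structure: this is not formal from the cofiber sequence, but can be obtained either by Gheorghe's obstruction-theoretic argument, or more transparently by the filtered-spectrum model of Pstr\k{a}gowski and Krause, in which $S_\C/\tau$ is realized as the underlying spectrum of the filtered $\E$-algebra associated to the $\MU$-based Adams--Novikov tower of $S$. Second, compute $(S_\C/\tau)_{*,*}(S_\C/\tau)$ and verify that it is isomorphic to the Hopf algebroid $(\MU_*, \MU_*\MU)$ presenting $\Mo$; descent along the flat cover $\Spec(\MU_*) \to \Mo$ then promotes this to the desired equivalence $\Mod_{S_\C/\tau}(\Sp_\C^{\mathrm{cell}}) \simeq \IndCoh(\Mo)$, once one checks that the $S_\C/\tau$-based Adams spectral sequence collapses at $E_2$.

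The principal obstacle is the construction and verification of the $\E$-algebra structure on $S_\C/\tau$ — the naive cofiber is only an $\E_1$-algebra, and the $\E_\infty$-enhancement demands genuine input, such as a filtered-spectrum model or delicate obstruction theory. A secondary difficulty is matching categories on both sides: one must use the version of $\IndCoh$ built as the ind-completion of compact/perfect objects on the non-Noetherian stack $\Mo$, and prove that this matches the entire category $\Mod_{S_\C/\tau}(\Sp_\C^{\mathrm{cell}})$ rather than a proper subcategory (e.g.\ of almost-perfect or bounded-above objects).
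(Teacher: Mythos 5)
This theorem is cited in the paper (to Gheorghe and GWX), not proved there; what the paper does is \emph{re-derive} it as a corollary of its main Theorem \ref{main theorem}, identifying $\tau$ with the Bott class $\beta$ so that inverting $\beta$ recovers $\IndCoh(\M)\simeq\Sp$ (from \cite[Theorem 2.4.4]{ChromaticCartoon}), while the cofiber sequence of Lemma \ref{connective cover from beta} gives $\sO_{\tau_{\ge 0}(\M)}/\beta\simeq\sO_{\Mo}$. Your proposal instead sketches the original Gheorghe--GWX route. That is a legitimate strategy, but a couple of the claims in your sketch are incorrect as stated.

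On the definition of $\tau$: Morel's $\pi_0$-line theorem computes $\pi_{-n,-n}(S_\C)$, not $\pi_{0,-1}(S_\C)$, so it does not by itself produce $\tau$. The standard construction (see \cite[Subsection 2.1]{BHS}) uses the constancy of $\mu_{p^\infty}$ over $\C$; and the existence and uniqueness of your Hurewicz lift also require a motivic Adams spectral sequence argument rather than inspection.

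More seriously, the claim that $(S_\C/\tau)_{*,*}(S_\C/\tau)$ recovers the Hopf algebroid $(\MU_*,\MU_*\MU)$ is wrong. In fact $\pi_{*,*}(S_\C/\tau)$ is not $\MU_*$: GWX compute it to be $\mathrm{Ext}^{*,*}_{\MU_*\MU}(\MU_*,\MU_*)$, the algebraic Adams--Novikov $E_2$-page, and the self-cooperations of $S_\C/\tau$ are correspondingly larger. The Hopf algebroid $(\MU_*,\MU_*\MU)$ enters instead through the cobar complex of $\mathrm{MGL}/\tau$ (equivalently $\MP_\C/\tau$) in $\Mod_{S_\C/\tau}(\Sp_\C^\mathrm{cell})$: one shows $\pi_{*,*}(\mathrm{MGL}/\tau)\cong\MU_*$ and $\pi_{*,*}(\mathrm{MGL}/\tau\wedge_{S_\C/\tau}\mathrm{MGL}/\tau)\cong\MU_*\MU$, and descends along the flat cover $\Spec(\MU_*)\to\Mo$; this is exactly what Lemma \ref{cosimplicial filtered level} encodes. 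Relatedly, in part (i) the step ``the essential image of Levine's embedding is precisely the $\tau$-local cellular spectra'' is not a formal consequence of weight-collapse --- it is the content of the statement, and is established in \cite{Dugger-Isaksen}, \cite{Gheorghe} using convergence of the ($p$-completed) motivic Adams--Novikov spectral sequence.
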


Informally, the theorem may be summarized as asserting that the $\i$-category of cellular motivic spectra $\Sp_{\C}^\mathrm{cell}$ forms a $1$-parameter deformation, with deformation parameter $\tau$, whose \textit{generic fiber}, i.e.\ at $\tau^{-1}$, is the usual stable category, and whose \textit{special fiber}, i.e.\ $\tau=0$, is the $\i$-category of ind-coherent sheaves $\IndCoh(\Mo)$. Hence the name \textit{$\tau$-deformation picture}; see also \cite[Subsection 1.3, p.\,10]{BHS} for further discussion on this perspective.

\begin{remmark}
With the exception of \cite{BHS}, most of the literature on this subject does not make explicit mention of ind-coherent sheaves, instead preferring to view the special fiber at $\tau=0$  as  Hovey's stable category of $\mathrm{MU}_*\mathrm{MU}$-comodules. Since the two $\i$-categories are equivalent by \cite[Proposition 5.40]{Bartel Heard Valenzuela}, this is merely an aesthetic difference.
\end{remmark}

The $\tau$-deformation picture has been extended in many directions: in \cite{BHS} to work over the base $\mathbf R$ and relating to $\mathrm C_2$-equivariant spectra, in \cite{Pstragowski} to the integral (i.e.\ non-$p$-local) setting via synthetic spectra, and in \cite{Chow t} to an integral setting over any field $k$ 
 via the Chow $t$-structure. In \cite{GIKR}, an alternative approach to proving Theorem \ref{The philosophy} is given (also followed by \cite{BHS}), which relies on filtered spectra.

In this paper, we follow a suggestion of Morava and connect the $\tau$-deformation picture with spectral algebraic geometry. More specifically, we relate it to our prior work \cite{ChromaticCartoon}, where we examined how chromatic homotopy theory manifests in non-connective spectral algebraic geometry. The main object of interest there, building on the work of Lurie from \cite{Elliptic 2}, was \textit{the moduli stack of oriented formal groups} $\M$.  It is a non-connective spectral stack, with one of its characteristic properties being
 a Bott-isomorphism-like equivalence of quasi-coherent sheaves
$$
\beta:\Sigma^2(\omega_{\M})\simeq \sO_{\M},
$$
where $\sO_{\M}$ is the structure sheaf and $\omega_{\M}$ is the dualizing line of the universal oriented formal group. We also showed in \cite[Theorem 2.4.4]{ChromaticCartoon} that the global sections functor $\sF\mapsto\Gamma(\M; \sF)$ induces an equivalence of $\i$-categories
\begin{equation}\label{MTCCI}
\IndCoh(\M)\simeq \Sp
\end{equation}
between the stable $\i$-category of spectra and ind-coherent sheaves (in the non-standard sense of \cite[Definition 2.4.2]{ChromaticCartoon}, equivalent to Definition \ref{Def of IndCoh}) on the non-connective spectral stack $\M$.
 In this paper, the main object of interest will be its \textit{connective cover} $\tau_{\ge 0}(\M)$, studied in Section \ref{Section 1.4}. It is a (connective) spectral stack, obtained from $\M$ by affine-locally applying the $\E$-ring-level connective cover functor $A\mapsto \tau_{\ge 0}(A)$. On the level of $\tau_{\ge 0}(\M)$, there still exists the dualizing line of the universal formal group $\omega$, and a Bott map
$$
\beta :\Sigma^2(\omega)\to\sO_{\tau_{\ge 0}(\M)},
$$
which is however no longer an equivalence of quasi-coherent sheaves. With that setup,
here is the main result of this paper:

\begin{theoremm}[Theorem \ref{section 6}, Corollary \ref{dictionary}]\label{MTI}
Everything in the following statement is implicitly $p$-completed, for an arbitrary prime $p$.
There is a canonical equivalence of symmetric monoidal $\i$-categories
$$
\IndCoh(\tau_{\ge 0}(\M))\simeq \Sp_{\C}^\mathrm{cell}
$$
between ind-coherent sheaves on $\tau_{\ge 0}(\M)$ in the sense of Definition \ref{Def of IndCoh}, and cellular motivic spectra over $\C$. Under this equivalence, we have the following correspondence between motivic and sheaf-theoretic notions:
\begin{eqnarray*}
S_{\C} &\leftrightarrow & \sO_{\tau_{\ge 0}(\M)}\\
S^{2,1} &\leftrightarrow & \omega^{\o-1}\\
S^{t,w} &\leftrightarrow&  \Sigma^{t-2w} (\omega^{\o-w}) \\
S^{0, -1}\xrightarrow{\tau} S_\C &\leftrightarrow & \Sigma^2(\omega)\xrightarrow{\beta}\sO_{\tau_{\ge 0}(\M)}\\
S_\C[\tau^{-1}] &\leftrightarrow & \sO_{\M}\\
 S_\C/\tau &\leftrightarrow &  \sO_{\Mo}.
\end{eqnarray*}
\end{theoremm}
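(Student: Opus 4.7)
The strategy is to match the $\tau$-deformation structure present on both sides. On the motivic side, inverting $\tau$ recovers $\Sp$, while modding out by $\tau$ gives $\IndCoh(\Mo)$, by Theorem \ref{The philosophy}. On the sheaf side, the connective cover $\tau_{\ge 0}(\M)$ is designed so that inverting the Bott map $\beta$ restores the original non-connective stack $\M$ (and hence $\IndCoh \simeq \Sp$ by \eqref{MTCCI}), while the vanishing locus of $\beta$ is the classical truncation $\Mo = \pi_0(\tau_{\ge 0}(\M))$. The $p$-completion hypothesis is used implicitly to make the resulting gluing well-behaved.

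The first step is to produce a symmetric monoidal functor $\Phi : \Sp_\C^{\mathrm{cell}} \to \IndCoh(\tau_{\ge 0}(\M))$ realizing the correspondence of Corollary \ref{dictionary}. Concretely, one may describe $\Sp_\C^{\mathrm{cell}}$ as the presentably symmetric monoidal stable $\i$-category freely generated by an invertible object $S^{2,1}$ together with a morphism $\tau : S^{0,-1} \to S_\C$ to the unit. The triple $(\IndCoh(\tau_{\ge 0}(\M)), \omega^{\o -1}, \beta)$, in which $\omega^{\o -1}$ is invertible since $\omega$ is a line bundle, supplies test data for this universal property and so defines such a $\Phi$, sending $S^{t,w} \mapsto \Sigma^{t-2w}(\omega^{\o -w})$ and $\tau \mapsto \beta$. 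An alternative is to first build $\Phi$ on the two fibers and glue, sidestepping the universal property at the cost of additional bookkeeping.

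To show $\Phi$ is an equivalence, one verifies the two fiberwise statements. Inverting $\tau$ on the source yields $\Sp$ by Theorem \ref{The philosophy}\ref{philun}; on the target, inverting $\beta$ restores $\M$ essentially by construction of $\tau_{\ge 0}(\M)$, so $\Phi[\beta^{-1}]$ becomes a symmetric monoidal functor $\Sp \to \IndCoh(\M)$ that must coincide with \eqref{MTCCI} by universality. Passing to modules over the cofiber of $\tau$ on the source yields $\IndCoh(\Mo)$ by Theorem \ref{The philosophy}\ref{phildeux}; on the target, the analogous operation recovers $\IndCoh$ of the vanishing locus of $\beta$, which is precisely $\Mo$. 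To promote these fiberwise equivalences to an equivalence of the total $\i$-categories, both $\Sp_\C^{\mathrm{cell}}$ and $\IndCoh(\tau_{\ge 0}(\M))$ must sit in matching recollements (or fracture squares) along the $\tau$/$\beta$ map whose pieces are precisely the two sides already identified.

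The main obstacle is the sheaf-side fracture square. The introduction advertises a geometric origin for the $\tau$-deformation via extended effective Cartier divisors, suggesting the following: the Bott morphism $\beta$ should define such a divisor with closed locus $\Mo$ and open complement $\M$, yielding a Beauville--Laszlo-type gluing of $\IndCoh(\tau_{\ge 0}(\M))$. Establishing this framework in spectral algebraic geometry and proving the gluing for $\IndCoh$ in the non-standard sense of Definition \ref{Def of IndCoh} is the technical heart of the argument; this is presumably where $p$-completeness enters essentially, ensuring that the two sides of the fracture jointly determine the middle. Once this is in place, Corollary \ref{dictionary} is a direct readout of the defining data of $\Phi$.
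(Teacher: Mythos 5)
Your first step contains a fatal error: $\Sp_\C^{\mathrm{cell}}$ is \emph{not} the free presentably symmetric monoidal stable $\i$-category on an invertible object $S^{2,1}$ and a morphism $\tau : S^{0,-1}\to S_\C$. That universal property characterizes something far smaller --- essentially $\Sp^{\mathrm{fil}}\simeq\QCoh(\mathbf A^1/\mathbf G_m)$ (or a mild variant thereof), whose unit has trivial endomorphisms. The motivic sphere $S_\C$ has an enormous bigraded homotopy ring that no freeness statement could produce; freeness would force the mapping spectra out of the unit to be computable by bar constructions on $\tau$ alone. So the functor $\Phi$ is not constructed, and the rest of the argument has nothing to act on.

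The role the paper assigns to the (true) universal property of filtered spectra is different and essential to see: rather than claiming $\Sp_\C^{\mathrm{cell}}$ is free, one uses the GIKR presentation (Theorem \ref{Theorem Ctau}) to exhibit $\Sp_\C^{\mathrm{cell}}$ as $\Mod_{\Omega^{0,*}_{\mathrm{st}}(S_\C)}(\Sp^{\mathrm{fil}})$, i.e.\ as a module category over $\Sp^{\mathrm{fil}}$. In parallel, Proposition \ref{IndCoh as filtered module} uses a monadicity argument (Construction \ref{construction of adjunction}, Lemma \ref{existence of right adjoint}, and the criterion of \cite[Proposition A.4]{BHS}) to exhibit $\IndCoh(\tau_{\ge 0}(\M))$ as $\Mod_{\Gamma(\tau_{\ge 0}(\M);\,\Sigma^{2*}(\omega^{\o *}))}(\Sp^{\mathrm{fil}})$. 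The proof then reduces to identifying the two filtered $\E$-algebras, which is done by combining the connective-cover identification $\tau_{\ge 2n}(\sO_\M)\simeq \Sigma^{2n}(\omega^{\o n})$ of Proposition \ref{higher connective covers} with the simplicial presentation $\M\simeq |\Spec(\MP^{\o\bullet+1})|$ and Lemma \ref{comparison lemma}. No recollement or Beauville--Laszlo gluing is needed at all; the $\tau$-deformation behavior you want to use as an ingredient is instead read off as a corollary once the equivalence is in hand.

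Your proposal also uses both halves of Theorem \ref{The philosophy} as black boxes, which would make the argument circular in spirit (the paper is reproving and geometrizing those very statements), and misplaces the role of $p$-completeness: it enters through the GIKR presentation and the computation $\Omega^{0,*}_{\mathrm{st}}(\mathrm{MGL}^{\o\bullet+1})\simeq\tau_{\ge 2*}(\mathrm{MU}^{\o\bullet+1})$ of Lemma \ref{cosimplicial filtered level} (cf.\ Remark \ref{don't worry about odd primes}), not through any gluing step. Finally, even granting a functor $\Phi$, a fiberwise check on the $\tau$-invertible and $\tau=0$ loci would only suffice if one had established a matching pair of recollements and $\tau$-completeness of the motivic sphere --- none of which is supplied; you correctly flag this as the ``technical heart'' but it is precisely the gap the filtered-module strategy is designed to avoid.
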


The proof of Theorem \ref{MTI} relies on the filtered module presentation of cellular motivic spectra from \cite[Section 6]{GIKR}, which we modify slightly in Section \ref{Section 1.3} to use the periodic complex bordism spectrum $\mathrm{MP}$, instead of its non-connective analogue $\mathrm{MU}$. We then give a similar filtered module presentation for the $\i$-category of ind-coherent sheaves on $\tau_{\ge 0}(\M)$ in Section \ref{Section 1.5}, and identify the two $\E$-algebra objects in Section \ref{section main}.
The main technical ingredient on the spectral algebraic geometry side is the observation of Lemma \ref{connective cover from beta} that the cofiber sequence
\begin{equation}\label{cofib intro}
\Sigma^2(\omega)\xrightarrow{\beta} \sO_{\tau_{\ge 0}(\M)}\to \pi_0(\sO_{\tau_{\ge 0}(\M)})\simeq \sO_{\mathcal M^\heart_\mathrm{FG}}
\end{equation}
 exhibits an equivalence with the $2$-connective cover
$
\tau_{\ge 2}(\sO_{\M})\simeq \Sigma^2(\omega)
$
in the $\i$-category of quasi-coherent sheaves on $\tau_{\ge 0}(\M)$.

From this perspective, the equivalences of $\i$-categories of Theorem \ref{The philosophy} are induced upon ind-coherent sheaves from the stack-level identifications:
\begin{enumerate}[label =(\roman*)]
\item The localization $\sO_{\tau_{\ge 0}}(\M)[\beta^{\pm 1}]\simeq \sO_{\M}$ recovers the structure sheaf of the non-connective stack $\M$, with the connective cover map $\M\to\tau_{\ge 0}(\M)$, which is an affine morphism. Using this, part \ref{philun} of Theorem \ref{The philosophy} reduces to  \eqref{MTCCI}.
\item By \eqref{cofib intro}, the cofiber $\sO_{\tau_{\ge 0}(\M)}/\beta$ is equivalent to the sheaf of $\E$-rings $\sO_{\Mo}$, the structure sheaf of the affine morphism of spectral stacks $\Mo\to \tau_{\ge 0}(\M)$. Part \ref{phildeux} of Theorem \ref{The philosophy} follows.
\end{enumerate}
In short, we may view the canonical cospan
$$
\M\to \tau_{\ge 0}(\M)\leftarrow \Mo
$$
as a deformation diagram of non-connective spectral stacks, with total space $\tau_{\ge 0}(\M)$, the deformation parameter $\beta$, the generic fiber (i.e.~$\beta$ invertible) being $\M$, and the special fiber (i.e.~$\beta= 0$) being $\Mo$.

We devote Part \ref{Part 2} to singling out the algebro-geometric circumstance that is responsible for this behavior. In Theorem \ref{defeorem}, we boil it down to the observation that the underlying ordinary stack map $\Mo\to\tau_{\ge 0}(\M)$ is an \textit{extended effective Cartier divisor} in the sense of Definition \ref{Def of effCart} (i.e.\ a closed immersion with invertible ideal sheaf), while the connective cover map $\M\to\tau_{\ge 0}(\M)$ is the complement of said divisor. This is as close as we know how to come with a stack-level deformation statement to the ideal ``fibration over $\mathbf A^1/\mathbf G_m$" picture, which would on the level of sheaves recover (and is suggested by) the  $\tau$-deformation picture. We discuss two separate issues with refining it in Remarks \ref{Remark problems 1}
and \ref{Remark problems 2}.

Though this deformation behavior of the stack $\tau_{\ge 0}(\M)$ is integral (i.e.\ not requiring an assumption of $p$-completeness), the comparison with motivic spectra of Theorem \ref{MTI} can only exist upon $p$-completion, else the motivic homotopy class $\tau\in \pi_{0, -1}(S_{\C})$ is not well-defined. On the other hand, \cite{Pstragowski} has proposed another explanation of the $\tau$-deformation picture, by introducing the algebraic notion of \textit{synthetic spectra}. These have since found other applications, for instance to asymptotic chromatic algebraicity \cite{Pstragowski2}, Goerss-Hopkins obstruction theory \cite{Pstragowski on GH}, and questions of manifold geometry in \cite{BHS:Manifolds}, \cite{BHS:Manifolds2}, and have been expanded in scope in \cite{PaPs}. Using the same filtered module spectra technique, which we use in the $p$-complete setting to prove Theorem \ref{MTI}, we obtain in Subsection \ref{section synthetic} an  integral comparison with synthetic spectra.

\begin{theoremm}[Theorem \ref{synthetic} and Remark \ref{MU over MP}]
There is a canonical equivalence of symmetric monoidal $\i$-categories
$$
\IndCoh(\tau_{\ge 0}(\M))\simeq \mathrm{Syn}_{\mathrm{MU}}^\mathrm{ev}
$$
between ind-coherent sheaves on $\tau_{\ge 0}(\M)$ and even synthetic spectra based on the  complex bordism spectrum $\mathrm{MU}$.
\end{theoremm}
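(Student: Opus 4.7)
The plan is to mirror the proof of Theorem \ref{MTI}. That proof proceeded by constructing a filtered $\E$-algebra presentation of $\Sp_\C^\mathrm{cell}$ in Section \ref{Section 1.3}, a parallel filtered $\E$-algebra presentation of $\IndCoh(\tau_{\ge 0}(\M))$ in Section \ref{Section 1.5}, and then identifying the two resulting filtered $\E$-algebras in Section \ref{section main}. The algebra on the stack side has $\MP$ as its underlying $\E$-ring, with filtration built out of iterates of the Bott map $\beta:\Sigma^2(\omega)\to\sO_{\tau_{\ge 0}(\M)}$. Since the $p$-completion on the motivic side was required only to make the element $\tau$ available, once an integral filtered-module target in place of $\Sp_\C^\mathrm{cell}$ is identified, the same machine should apply without $p$-completion.

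Such an integral target is $\mathrm{Syn}_{\mathrm{MU}}^\mathrm{ev}$. By the filtered-module characterization of even $\mathrm{MU}$-synthetic spectra (as in \cite{Pstragowski}, \cite{PaPs}), $\mathrm{Syn}_{\mathrm{MU}}^\mathrm{ev}$ is canonically equivalent to the $\i$-category of modules in $\Fun(\Z^\op,\Sp)$ over the double-speed Whitehead tower $\{\tau_{\ge 2\star}(\mathrm{MU})\}$, regarded as a commutative algebra object in filtered spectra. Its $n$-th associated graded piece is $\Sigma^{2n}(\mathrm{MU}_{2n})$. On the stack side, iterating the cofiber sequence of Lemma \ref{connective cover from beta} identifies the layers of the Bott filtration on $\sO_{\tau_{\ge 0}(\M)}$ with $\Sigma^{2n}(\omega^{\o n})$, whose global sections match $\mathrm{MU}_{2n}$ after the translation between $\MP$ and $\mathrm{MU}$ explained in Remark \ref{MU over MP}: this translation undoes the $2$-periodicity of $\MP$ using the Bott generator, and recovers the Whitehead filtration of $\mathrm{MU}$ from the Bott filtration of $\MP$.

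The main obstacle I expect is checking that the resulting identification of filtered $\E$-algebras is symmetric monoidal, not merely $\E$-algebraic. The symmetric monoidal structure on $\mathrm{Syn}_{\mathrm{MU}}^\mathrm{ev}$ arises from a Day convolution on filtered $\mathrm{MU}$-modules, while the one on the filtered presentation of $\IndCoh(\tau_{\ge 0}(\M))$ is inherited from $\QCoh(\tau_{\ge 0}(\M))$. Both arise from commutative algebra structures on the same underlying filtered $\E$-algebra, so the comparison should propagate, but its coherence must be tracked through the filtered-module formalism exactly as in the $p$-complete case. Once this is done, applying $\Mod_{(-)}$ yields the asserted symmetric monoidal equivalence $\IndCoh(\tau_{\ge 0}(\M))\simeq\mathrm{Syn}_{\mathrm{MU}}^\mathrm{ev}$, now available integrally rather than only after $p$-completion.
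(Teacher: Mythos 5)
Your high-level plan (redo the filtered-module presentations of both sides and match the filtered $\E$-algebras, noting that $p$-completeness only entered to make $\tau$ available motivically) is the right one and is essentially what the paper does. But your description of the synthetic side contains a genuine error, and it obscures a nontrivial hypothesis that the paper actually has to check.

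You assert that $\mathrm{Syn}_{\mathrm{MU}}^\mathrm{ev}$ is equivalent to modules in $\Spfil$ over the double-speed Whitehead tower $\{\tau_{\ge 2\star}(\mathrm{MU})\}$. This is not correct: modules over that filtered $\E$-ring are (filtered) $\mathrm{MU}$-modules, whereas $\mathrm{Syn}_{\mathrm{MU}}^\mathrm{ev}$ contains objects like $\nu S^0$ that are not $\mathrm{MU}$-module spectra. The filtered $\E$-ring controlling the synthetic category is instead the descent/cobar object $\Tot(\tau_{\ge 2*}(\mathrm{MU}^{\o\bullet+1}))$ (or equivalently $\Tot(\tau_{\ge 2*}(\MP^{\o\bullet+1}))$, the one that appears on the stack side via the simplicial presentation of $\tau_{\ge 0}(\M)$). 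Your claim about the associated graded pieces being $\Sigma^{2n}(\mathrm{MU}_{2n})$, and about global sections of $\Sigma^{2n}(\omega^{\o n})$ matching $\mathrm{MU}_{2n}$, are symptoms of the same confusion: those global sections are the Adams--Novikov $E_2$-input, not homotopy groups of $\mathrm{MU}$.

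The second gap is that identifying $\mathrm{Syn}_{\MP}^\mathrm{ev}$ with $\Mod_{\Tot(\tau_{\ge 2*}(\MP^{\o\bullet+1}))}(\Spfil)$ via \cite[Proposition C.12]{BHS} requires, in addition, that the synthetic unit be $\tau$-complete, i.e.\ $\mathbf 1^\wedge_\tau\simeq \mathbf 1$ in $\mathrm{Syn}^\mathrm{ev}_\mathrm{MP}$, which in turn comes from $\MP$-nilpotent completeness of the sphere. Your proposal skips this entirely, and without it the equivalence simply fails (only the $\tau$-complete subcategory is captured). Finally, the passage from $\MP$ to $\mathrm{MU}$ is not a filtered-algebra translation as you suggest; it is the elementary observation (Remark \ref{MU over MP}) that $\Sp_{\MP}^\mathrm{fpe}=\Sp_\mathrm{MU}^\mathrm{fpe}$ and the two surjectivity conditions in Construction \ref{cons synth} agree, so $\mathrm{Syn}^\mathrm{ev}_{\MP}\simeq\mathrm{Syn}^\mathrm{ev}_{\mathrm{MU}}$ directly from the definitions, with no deshearing or filtration-matching required.
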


We also consider in Variant \ref{variant} a finite-height analogue of this result. For any fixed height $n$ (and implicit prime $p$), it relates ind-coherent sheaves on the connective cover $\tau_{\ge 0}(\Mn)$ of the moduli stack $\Mn$ of oriented formal groups of height $\le n$ from \cite[Definition 3.2.1]{ChromaticFiltration}, and synthetic spectra based on the Lubin-Tate spectrum $E_n$.

In summation, the results of this paper demonstrate how (at least over the complex numbers and in the $p$-complete cellular context) stable motivic homotopy theory manifests in  spectral algebraic geometry, using the spectral stack $\tau_{\ge 0}(\M)$. Our prior work \cite{ChromaticCartoon}, \cite{ChromaticFiltration} shows that the same is true of chromatic homotopy theory, using the non-connective spectral stack $\M$. In this sense, present results may not seem so surprising; Voevodsky's conjecture \cite[Conjecture 9]{Voevodsky} (proved in \cite{Levine} by relying on work of Hopkins-Morel, see \cite{Hoyois}), and especially the related \cite[Theorem 4]{Levine} and \cite{Levine2}, already showcase a deep connection between the non-motivic Adams-Novikov spectral sequence, the progenitor of chromatic homotopy theory, and (the \textit{d\'ecalage} of) the slice filtration, one of the  fundamental constructions in motivic stable homotopy theory.

On the other hand, this may seem somewhat counter-intuitive (though not contradictory) in light of the results of \cite{Cisinski-Khan} about invariance of motivic homotopy under derived (or spectral) enhancement of the base. More precisely, \cite[Corollary 2.7.5]{Cisinski-Khan} shows that, for any quasi-compact quasi-separated spectral algebraic space $B$, the canonical functor between appropriately-defined $\i$-categories of motivic spectra $\Sp_B\to \Sp_{B^\heart}$, induced from sending a smooth spectral $B$-scheme $X$ to the ordinary smooth $B^\heart$-scheme $X^\heart$, is an equivalence of $\i$-categories. That means that one ready way of attempting to combine spectral algebraic geometry with motivic homotopy theory yields trivial results. We believe that the results of the present paper are instead a shadow of phenomena in the other possible combination of the two: \textit{motivic spectral algebraic geometry}. 
That should be roughly algebraic geometry, in which the role of affines is played by motivic $\E$-ring spectra.
In that case, Theorem \ref{MTI} suggests a close relationship between a motivic version of the stack $\M$, and the non-motivic connective cover $\tau_{\ge 0}(\M)$.
 We hope to return to this in future work.

The novel connection between  spectral algebraic geometry and motivic homotopy theory, established this paper, may lead to other new insights and applications. 
For instance, it suggests a connection between the motivic spectrum \textit{mmf} of motivic modular forms, defined  in a somewhat \textit{ad hoc} way using filtered spectra in \cite{GIKR}, and the connective cover $\tau_{\ge 0}(\mathcal M_\mathrm{Ell}^\mathrm{or})$ of the spectral moduli stack of oriented elliptic curves $\mathcal M_\mathrm{Ell}^\mathrm{or}$, which is connected to the usual spectrum of topological modular forms \textit{tmf} in \cite{survey} and \cite{Elliptic 2}. 
The relationship between preorientations of formal groups in the sense of Lurie and motivic homotopy theory has been touched on in the height $1$ case by \cite{Hornbostel: Preorientations}, but we hope that the connection with spectral algebraic geometry and the stack $\M$ that we demonstrate in this paper may shed some light on the fascinating interactions of chromatic and motivic ideas, as considered in various forms  and from various perspectives in \cite{Transchromatic},  \cite{Gheorghe: Exotic}, \cite{Hornbostel: Localizations}, \cite{Joachimi}, \cite{Krause}, \cite{AMG}, \cite{Motivic Landweber}, \cite{Sta2} etc.

\subsection*{Acknowledgments}
 Thanks to David Ben-Zvi and Andrew Blumberg for their constant support and encouragement, and to Viktor Burghardt, Tom Gannon, Paul Goerss, Elchanan Nafcha, and Sam Raskin for useful conversations about this project.

This paper was born out of a suggestion by Jack Morava that there should be a relationship between \cite{ChromaticCartoon} and the $\tau$-deformation picture of \cite{GWX} \textit{et al.}
 That came at much surprise to the author, who is therefore immensely grateful to Morava, both for his generosity in sharing this idea, as well as for his insistence that we give it serious thought.

\section{Comparison results}\label{Section 1}

\subsection{Conventions regarding motivic spectra}
Let $\Sp_{\C}$ denote the stable $\i$-category of motivic spectra over $\C$. The full subcategory of \textit{cellular motivic spectra} $\Sp_{\C}^\mathrm{cell}\subseteq\Sp_{\C}$ is taken to be the full subcategory generated under colimits by the bigraded spheres $S^{t, w}\simeq \Sigma^{t-w}(\Sigma^\infty(\G_m)^{\otimes w})$ for all $t,w\in\Z$. We denote $S_\C := S^{0, 0}$, often also denoted $\mathbf 1$ in the literature. It is the unit for the smash product symmetric monoidal structure on $\Sp_\C$.
The functor $X\mapsto X(\mathbf C)$, sending a smooth $\mathbf C$-scheme to the set of its $\mathbf C$-points, equipped with its analytic topology, gives rise to the \textit{Betti realization} functor $\Sp_\C\to\Sp$. Under Betti realization, bigraded spheres are sent to ordinary spheres as $S^{t,w}\mapsto S^t = \Sigma^t(S)$.

 If we  work in the $p$-completed setting for some prime $p$, there exists a special element $\tau\in \pi_{0, -1}(S_{\C})$ with several interesting properties - see \cite[Subsection 2.1]{BHS} for an explicit construction. Under Betti realization,  it corresponds to $1\in \mathbf Z\simeq \pi_0(S)$.

\begin{remark}
With \cite{GWX} as a prominent exception, most literature on stable motivic homotopy theory prefers the classical notation $\mathcal{SH}(\C)$ to what we call $\C$. That name is chosen  to comply with the convention that $\mathcal{SH}$ denotes usual non-motivic spectra. On the other hand, we follow \cite{HA} (and much of the rest of the modern homotopy-theoretic literature) in denoting the $\i$-category of spectra by $\Sp$, and so $\Sp_{\C}$ for motivic spectra seems more appropriate. Another perhaps more objective reason to prefer the notation $\Sp_{\C}$ over $\mathcal{SH}(\C)$ is that the latter suggests a homotopy category, whereas we mean the full $\i$-category instead.
\end{remark}

\begin{remark}
Another name in the $\tau$-deformation literature, that may differ from the standard conventions in the field of the motivic homotopy theory, is that of cellular spectra. This notion was introduced, and thoroughly studied in analogy with other homotopy-theoretic incarnations of cellularity, in \cite{Dugger-Isaksen}. On the other hand, much of the motivic literature, including \cite{BHS},  prefers the more traditional algebro-geometric term \textit{Tate motivic spectra}.  This has to do with the fact that the spheres $S^{0, n}$ for $n\in \Z$, which generate $\Sp_{\C}^\mathrm{cell}$ under limits, colimits and extensions, encode in motivic cohomology analogous structure to Tate twists in \'etale cohomology. 
\end{remark}

\subsection{Conventions regarding filtered spectra}
In what follows, the $\i$-category of \textit{filtered spectra} is defined as $\Spfil\simeq\Fun(\mathbf Z_\mathrm{poset}^\mathrm{op}, \Sp)$, which is to say, contravariant functors from the poset $\mathbf Z$ to spectra. A filtered spectrum $X$ therefore roughly consists of a diagram
$$
\cdots \to X_{n+1}\to X_n\to X_{n-1}\to\cdots
$$
in the $\i$-category $\Sp$. The $\i$-category of filtered spectra $\Spfil$ acquires a symmetric monoidal structure through Day convolution, which is explicitly given filtered-degree-wise by
$$
(X\o Y)_n \simeq \varinjlim_{n\le i+j} X_i\o Y_j
$$
in terms of the smash product of spectra, for all $n\in \Z$. For more details on filtered spectra, see \cite[Section 3]{Lurie Rotation}, \cite[Section 2.1]{GIKR}, or, in a more general context, \cite[Appendix B]{BHS}.

\subsection{Cellular motivic spectra as filtered module spectra}\label{Section 1.3}
The following is little more than a recap of \cite[Section 6]{GIKR}, the only change being that we use a periodic version of the algebraic bordisms spectrum.
Everything in this section is implicitly $p$-completed, for an arbitrary fixed prime $p$.

\begin{remark}
The $\i$-category of motivic spectra $\Sp_\C$ is presentable and stable, and therefore by \cite[Proposition 4.8.2.18]{HA} is tensored over the $\i$-category of spectra $\Sp$.  This allows us to form the mapping spectrum $\mathrm{map}_{\Sp_\C}(X, Y)\in \Sp$ for any pair of motivic spectra $X, Y\in \Sp_\C$, whose underlying infinite loop space $\Omega^\i(\mathrm{map}_{\Sp_\C}(X, Y))\simeq \Map_{\Sp_\C}(X, Y)$ recovers the usual mapping space.
\end{remark}

\begin{definition}
Given a motivic spectrum $X\in \Sp_{\C}$, we define its \textit{stable homotopy groups} to be the filtered spectrum $\Omega^{0, *}_\mathrm{st}(X)\in \Spfil$, defined as follows. Its $n$-th filtration level is given by the mapping spectra
$$
\Omega^{0, n}_\mathrm{st}(X)\simeq \mathrm{map}_{\mathrm{Sp}_\C}(S^{0, n}, X),
$$
and its filtration structure $\Omega^{0, n}_\mathrm{st}(X)\to \Omega^{0, n-1}_\mathrm{st}(X)$ is induced on mapping spectra by the map $S^{0, n-1}\xrightarrow{\tau} S^{0, n}$.
\end{definition}

The functor $\Omega^{0, *}_\mathrm{st} : \mathrm{Sp}_{\C}\to\Spfil$ is lax symmetric monoidal by \cite[Proposition 6.6]{GIKR}. It sends the monoidal unit $S_{\C}:=S^{0, 0}$ of motivic spectra into an $\E$-algebra object $\Omega^{0, *}_\mathrm{st}(S_{\C})\in \CAlg(\Spfil)$. The following is the main comparison result, expressing cellular motivic spectra in terms of filtered spectra:

\begin{theorem}[{\cite[Theorem 6.12]{GIKR}}]\label{Theorem Ctau}
The functor $\Omega^{0, *}_\mathrm{st}$ exhibits an equivalence of symmetric monoidal $\i$-categories
$
\Sp_\mathrm{\C}^\mathrm{cell}\simeq \Mod_{\Omega^{0, *}_\mathrm{st}(S_{\C})}(\Spfil).
$
\end{theorem}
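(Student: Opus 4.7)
The statement is essentially \cite[Theorem 6.12]{GIKR}, and the strategy I would pursue is the one outlined there: a monadic/Schwede--Shipley-style recognition of the target module category via the standard generators. Since $\Omega^{0,*}_\mathrm{st}$ is lax symmetric monoidal, it factors canonically as a symmetric monoidal functor
$$
\Phi : \Sp_\C^\mathrm{cell} \longrightarrow \Mod_{\Omega^{0,*}_\mathrm{st}(S_\C)}(\Spfil),
$$
and the task reduces to proving that $\Phi$ is an equivalence of symmetric monoidal $\i$-categories.

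The first step is to check that $\Phi$ preserves small colimits and is conservative. Colimit preservation reduces, via the compact generation of $\Sp_\C^\mathrm{cell}$ by the bigraded spheres $S^{0,n}$, to the compactness of each $S^{0,n}$ in $\Sp_\C^\mathrm{cell}$, which is standard. Conservativity follows from the identification $\pi_{t,w}(X)\simeq \pi_t((\Omega^{0,*}_\mathrm{st}(X))_w)$, since a cellular motivic spectrum vanishes if and only if all of its bigraded homotopy groups do.

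The crux of the proof is the identification of $\Phi$ on generators. Directly from the definition,
$$
(\Omega^{0,*}_\mathrm{st}(S^{t,w}))_k \simeq \mathrm{map}_{\Sp_\C}(S^{0,k}, S^{t,w}) \simeq \Sigma^t(\Omega^{0,*}_\mathrm{st}(S_\C))_{k-w},
$$
which exhibits $\Phi(S^{t,w})$ as a topological suspension of a filtration-shift of the unit in $\Mod_{\Omega^{0,*}_\mathrm{st}(S_\C)}(\Spfil)$. Since such shifts form a set of compact generators of the target module category, combining this with the previous paragraph and invoking the Barr--Beck--Lurie monadicity theorem (together with the standard criterion for identifying module categories from \cite{HA}) concludes that $\Phi$ is a symmetric monoidal equivalence.

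The main obstacle lies in the third step: the pointwise identification of filtered spectra above is nearly tautological, but one must verify that the filtration structure maps, induced by precomposition with $\tau : S^{0,k-1} \to S^{0,k}$, are genuinely compatible with the $\Omega^{0,*}_\mathrm{st}(S_\C)$-module structure on $\Phi(S^{t,w})$, so that on each side the compact generators and their actions correspond precisely. This reduces to the compatibility between the motivic smash product and the Day convolution symmetric monoidal structure on $\Spfil$, which is encoded in the lax monoidal structure of $\Omega^{0,*}_\mathrm{st}$ and treated carefully in \cite[Proposition 6.6]{GIKR}.
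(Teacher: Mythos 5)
The paper does not give a proof of this statement: it is presented as a direct citation of \cite[Theorem 6.12]{GIKR}, so there is no argument in the present paper against which to compare yours. What you write is a plausible reconstruction of the strategy that \cite{GIKR} (and similar ``Schwede--Shipley recognition'' results such as \cite[Proposition A.4]{BHS}, which the paper itself invokes for the parallel Proposition \ref{IndCoh as filtered module}) actually use, and the main computational content --- the formula $(\Omega^{0,*}_\mathrm{st}(S^{t,w}))_k \simeq \Sigma^t(\Omega^{0,*}_\mathrm{st}(S_\C))_{k-w}$, compactness of the bigraded spheres, and conservativity via bigraded homotopy groups --- is correct.

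One point to tighten. Lax symmetric monoidality of $\Omega^{0,*}_\mathrm{st}$ gives a canonical factorisation through a \emph{lax} symmetric monoidal functor $\Phi : \Sp_\C^\mathrm{cell}\to\Mod_{\Omega^{0,*}_\mathrm{st}(S_\C)}(\Spfil)$; that $\Phi$ is in fact \emph{strong} symmetric monoidal is not a formal consequence of lax monoidality and is part of what the theorem asserts. The upgrade is true, but it requires the rest of your argument (colimit-preservation plus the identification on generators and on mapping spectra between them), so it should not be stated up front as if it were automatic. Related to this, after identifying $\Phi(S^{t,w})$ levelwise with a suspension of a filtration-shift of the unit module, you still need this identification to hold as $\Omega^{0,*}_\mathrm{st}(S_\C)$-modules and not merely underlying filtered spectra; you flag this yourself in your last paragraph, and the right thing to point to is that the module structure map $\Omega^{0,*}_\mathrm{st}(S_\C)\otimes\Omega^{0,*}_\mathrm{st}(X)\to\Omega^{0,*}_\mathrm{st}(X)$ supplied by the lax structure is already a map of filtered spectra, so the filtration compatibility comes for free; the actual work is the compact-generation and full-faithfulness-on-generators bookkeeping, which you correctly identify. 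With those adjustments the sketch is sound and follows the approach of the cited source.
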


To utilize this theorem, we must express the $\E$-algebra $\Omega^{0, *}_\mathrm{st}(S_\C)$ in a way that does not directly invoke motivic homotopy theory. For this purpose, let $\MP_{\C}:=\bigoplus_{i\in \mathbf Z}\Sigma^{2i, i}(\mathrm{MGL})$ be the motivic periodic bordism spectrum. We equip it with a standard $\E$-structure, e.g.\ either the one coming from the motivic Thom spectrum presentation
$$
\MP_\C\simeq \varinjlim (\Omega^\infty(\mathrm K(\C))\xrightarrow{S_{\C}^-}\Sp_{\C}),
$$
or the (in light of the analogous non-motivic result of \cite{Hahn-Yuan}, likely distinct) one coming from the equivalence $\MP_\C\simeq \Sigma^\infty_+(\mathrm{BGL})[\beta^{-1}]$ of motivic ring spectra of \cite{Gepner-Snaith}.

\begin{lemma}\label{cosimplicial filtered level}
There is a canonical equivalence $\Omega^{0, *}_{\mathrm{st}}(\MP_{\C}^{\otimes \bullet +1})\simeq \tau_{\ge 2*}(\MP^{\otimes \bullet +1})$ of cosimplicial objects in filtered spectra .
\end{lemma}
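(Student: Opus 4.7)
This is the periodic analogue of the result for $\mathrm{MGL}_\C$/$\mathrm{MU}$ in \cite[Section 6]{GIKR}, and I would follow the same strategy, modified slightly to accommodate the passage from connective to periodic cobordism. First, I would construct a natural comparison map. Betti realization $\mathrm{Be} : \Sp_\C \to \Sp$ is symmetric monoidal with $\mathrm{Be}(S^{0, n}) \simeq S^0$ and $\mathrm{Be}(\MP_\C) \simeq \MP$ (the latter following from $\mathrm{Be}(\mathrm{MGL}_\C) \simeq \MU$ together with its compatibility with the bigraded shifts entering the definitions of $\MP_\C$ and $\MP$). Applying $\mathrm{Be}$ to mapping spectra then furnishes a natural map
$$
\Omega^{0, n}_{\mathrm{st}}(\MP_\C^{\otimes k+1}) \;=\; \mathrm{map}_{\Sp_\C}(S^{0, n}, \MP_\C^{\otimes k+1}) \;\longrightarrow\; \mathrm{map}_{\Sp}(S^0, \MP^{\otimes k+1}) \;\simeq\; \MP^{\otimes k+1},
$$
and its cosimplicial compatibility is automatic since the cosimplicial structure on both sides arises from the $\E$-multiplication on $\MP_\C$ and $\MP$ respectively, and $\mathrm{Be}$ is monoidal.

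Second, I would reduce the equivalence claim to two homotopy-level assertions: (a) $\pi_{i, n}(\MP_\C^{\otimes k+1}) = 0$ for $i < 2n$, so that the above map factors canonically through $\tau_{\ge 2n}(\MP^{\otimes k+1})$; and (b) the resulting map $\pi_{i, n}(\MP_\C^{\otimes k+1}) \to \pi_i(\MP^{\otimes k+1})$ is an isomorphism for $i \ge 2n$. This bigraded homotopy computation is the main technical obstacle. In the $p$-complete cellular setting over $\C$, the analogous facts for $\mathrm{MGL}_\C^{\otimes k+1}$ are precisely what \cite[Proposition 6.7]{GIKR} establishes, relying on the Hopkins--Morel theorem (as proved by \cite{Hoyois}) together with Levine's comparison \cite{Levine}. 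To pass from $\mathrm{MGL}_\C$ to $\MP_\C$, I would use the motivic direct-sum decomposition $\MP_\C \simeq \bigoplus_{i \in \mathbf Z} \Sigma^{2i, i}(\mathrm{MGL}_\C)$ (valid as an identification of underlying motivic spectra, independently of the chosen $\E$-structure) and its $(k+1)$-fold tensor power, which expresses $\MP_\C^{\otimes k+1}$ as a sum of bigraded suspensions of $\mathrm{MGL}_\C^{\otimes k+1}$. The same decomposition under Betti realization yields $\MP^{\otimes k+1}$ as a sum of ordinary suspensions of $\MU^{\otimes k+1}$, making (a) and (b) for $\MP_\C^{\otimes k+1}$ a direct consequence of the corresponding statements for $\mathrm{MGL}_\C^{\otimes k+1}$.

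Finally, I would check compatibility with the filtration transition maps. The structure map $\Omega^{0, n}_{\mathrm{st}}(\MP_\C^{\otimes k+1}) \to \Omega^{0, n-1}_{\mathrm{st}}(\MP_\C^{\otimes k+1})$ is induced by $\tau : S^{0, n-1} \to S^{0, n}$, which Betti-realizes to the identity of $S^0$. Under the identification of step two, this precisely matches the canonical inclusion $\tau_{\ge 2n}(\MP^{\otimes k+1}) \hookrightarrow \tau_{\ge 2(n-1)}(\MP^{\otimes k+1})$, yielding the claimed equivalence as a cosimplicial object of $\Spfil$. The difficulty is concentrated almost entirely in the homotopy-group computation; once that is granted, the rest of the argument is formal bookkeeping inherited from \cite{GIKR}.
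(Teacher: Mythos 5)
Your proposal is correct and follows essentially the same strategy as the paper: both reduce the periodic case to the non-periodic equivalence $\Omega^{0,*}_\mathrm{st}(\mathrm{MGL}^{\otimes\bullet+1})\simeq\tau_{\ge 2*}(\mathrm{MU}^{\otimes\bullet+1})$ from \cite[Lemma 6.7]{GIKR}, using the direct-sum decomposition $\MP_\C\simeq\bigoplus_i\Sigma^{2i,i}(\mathrm{MGL})$, compactness of the bigraded spheres to commute the mapping spectrum past the direct sum, and compactness of the ordinary spheres to commute connective covers past the direct sum. The only presentational difference is that you construct an explicit comparison map via Betti realization and verify it is an equivalence by checking bigraded homotopy groups, whereas the paper writes a direct chain of natural equivalences and imports the GIKR result as a black box at the cosimplicial filtered-spectrum level; your homotopy-group verification (a)–(b) is really just unwinding what the GIKR equivalence already packages, so the paper's route is marginally more economical but not materially different.
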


\begin{proof}
Recall from \cite[Lemma 6.7]{GIKR} (see also Remrk \ref{don't worry about odd primes} the analogous non-periodic result, i.e.\ the equivalence
\begin{equation}\label{lemma in the non-periodic case}
\Omega^{0, *}_\mathrm{st}(\mathrm{MGL}^{\o\bullet +1})\simeq \tau_{\ge 2*}(\mathrm{MU}^{\o\bullet +1}).
\end{equation}
From this, we obtain a canonical series of equivalences
\begin{eqnarray*}
\Omega^{0, *}_\mathrm{st}(\MP_{\C})&\simeq & \mathrm{map}_{\Sp_\C}\big(S^{0, *}, \bigoplus_{i_0, \ldots, i_\bullet\in \Z} \Sigma^{2(i_0+
\cdots + i_\bullet), (i_0 + \cdots +i_\bullet)}(\mathrm{MGL}^{\otimes\bullet +1})\big)\\
&\simeq &  \bigoplus_{i_0, \ldots, i_\bullet\in \Z} \Sigma^{2(i_0+
\cdots + i_\bullet)} \mathrm{map}_{\Sp_\C}(S^{0, *-(i_0+\cdots + i_\bullet)}, \mathrm{MGL}^{\otimes\bullet +1})\\
&\simeq &  \bigoplus_{i_0, \ldots, i_\bullet\in \Z} \Sigma^{2(i_0+
\cdots + i_\bullet)} \tau_{\ge 2(*-(i_0+\cdots + i_\bullet))}  (\mathrm{MU}^{\otimes\bullet +1})\\
&\simeq &  \tau_{\ge 2*}\big( \bigoplus_{i_0, \ldots, i_\bullet\in \Z} \Sigma^{2(i_0+
\cdots + i_\bullet)}  (\mathrm{MU}^{\otimes\bullet +1})\big)\\
&\simeq &\tau_{\ge 2*}(\MP^{\otimes \bullet +1}).
\end{eqnarray*}
Here the second equivalence follows from compactness of the bigraded spheres in $\Sp_{\C}$, the third is the result mentioned above, and the fourth is a consequence of connective covers commuting with direct sums due to compactness of the spheres. The first and the last equivalences are a matter of definitions.
\end{proof}

\begin{remark}\label{don't worry about odd primes}
Equivalence \eqref{lemma in the non-periodic case} is the key technical result underlying this paper. It is proved in \cite[Lemma 6.7]{GIKR} for $p=2$, but the argument given there works for all primes $p$, as shown also in \cite[Section 7.4]{Pstragowski}. Indeed, as given in \cite{GIKR}, the argument requires two technical inputs:
\begin{enumerate}[label = (\alph*)]
\item  The map $\Map_{\Sp_{\C}}(S^{0,a}, S^{0, b})\to \Map_{\Sp}(S^0, S^0)$, induced by the Betti realization, is a homotopy equivalence for all $a\le b$.\label{point one of this remark}
\item There is a bigraded ring isomorphism $\pi_{*, *}(\mathrm{MGL})\simeq\mathbf Z[\tau][t_1, t_2, \ldots]$ with $|t_i| = (2i, i)$, compatible via Beti realization with Milnor's famous graded-ring isomorphism $\pi_*(\mathrm{MU})\simeq \mathbf Z[t_1, t_2, \ldots]$ with $|t_i|=2i$.
\label{second point of this remark}
\end{enumerate}
For the second point, recall that everything is implicitly $p$-completed. To justify these two points,  \cite{GIKR} invokes the computations of \cite{GI16} for \ref{point one of this remark}, and \cite[Theorem 7]{HKO11} for \ref{second point of this remark}. Both of these references work exclusively in the setting of $p=2$, but the results extent to arbitrary prime $p$. For \ref{second point of this remark}, see \cite[Section 2.5]{Sta16}, and \ref{point one of this remark} follows from that using the same motivic Adams-Novikov spectral sequence argument as used in \cite{GI16}.
\end{remark}

\begin{lemma}\label{comparison lemma}
There is a canonical equivalence $\Omega^{0, *}_\mathrm{st}(S_{\C})\simeq \Tot(\tau_{\ge 2*}(\MP^{\otimes \bullet+1}))$ of $\E$-algebra objects in filtered spectra.
\end{lemma}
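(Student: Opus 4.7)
The plan is to combine Lemma \ref{cosimplicial filtered level} with $\MP_{\C}$-based Adams descent for the $p$-completed motivic sphere, and then pass to totalizations. Concretely, I would establish the following chain of equivalences in $\CAlg(\Spfil)$:
$$
\Omega^{0,*}_\mathrm{st}(S_{\C}) \;\simeq\; \Omega^{0,*}_\mathrm{st}\bigl(\Tot(\MP_{\C}^{\otimes \bullet +1})\bigr) \;\simeq\; \Tot\bigl(\Omega^{0,*}_\mathrm{st}(\MP_{\C}^{\otimes \bullet +1})\bigr) \;\simeq\; \Tot\bigl(\tau_{\ge 2*}(\MP^{\otimes \bullet +1})\bigr),
$$
with the last equivalence being exactly Lemma \ref{cosimplicial filtered level}.

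\textbf{Step 1: Motivic Amitsur descent for $\MP_{\C}$.} I would first check that $p$-completely, the motivic sphere spectrum is recovered as the totalization of the cobar construction on $\MP_\C$, i.e.\ $S_{\C} \simeq \Tot(\MP_{\C}^{\otimes \bullet +1})$. The argument can be bootstrapped from the non-periodic case: the analogous statement $S_{\C}\simeq \Tot(\mathrm{MGL}^{\otimes\bullet+1})$ is the convergence of the ($p$-complete) motivic Adams-Novikov spectral sequence, which underlies \cite[Section 6]{GIKR}, and the passage $\mathrm{MGL}\rightsquigarrow \MP_{\C}=\bigoplus_{i\in \Z}\Sigma^{2i,i}\mathrm{MGL}$ does not affect the Adams-Novikov descent since the $\Sigma^{2i,i}$-shifts are $\o$-invertible and the direct sum is finite at each cosimplicial degree after smashing.

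\textbf{Step 2: Commuting $\Omega^{0,*}_\mathrm{st}$ past $\Tot$.} The functor $\Omega^{0,*}_\mathrm{st}:\Sp_{\C}\to\Spfil$ preserves limits, since at each filtration level it is $\mathrm{map}_{\Sp_{\C}}(S^{0,n},-)$, a corepresentable functor into a stable $\i$-category, and since limits in $\Spfil$ are computed filtered-degree-wise. Consequently we may pull $\Omega^{0,*}_\mathrm{st}$ inside the totalization, which combined with Lemma \ref{cosimplicial filtered level} yields the desired underlying equivalence of filtered spectra.

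\textbf{Step 3: Promotion to $\E$-algebras.} For the $\E$-algebra refinement, I would invoke the lax symmetric monoidality of $\Omega^{0,*}_\mathrm{st}$ (\cite[Proposition 6.6]{GIKR}), which gives a lax symmetric monoidal functor from augmented cosimplicial $\E$-algebras to augmented cosimplicial $\E$-algebras in filtered spectra; upon totalization this produces a map of $\E$-algebras. The Amitsur cosimplicial object $\MP_{\C}^{\otimes\bullet+1}$ is canonically augmented over $S_{\C}$ as an $\E$-algebra, and by Lemma \ref{cosimplicial filtered level} the comparison $\Omega^{0,*}_\mathrm{st}(\MP_{\C}^{\otimes\bullet+1})\simeq \tau_{\ge 2*}(\MP^{\otimes \bullet+1})$ respects this cosimplicial $\E$-algebra structure (since the $\tau_{\ge 2*}$ Whitehead filtration is multiplicative). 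Taking $\Tot$ in $\CAlg(\Spfil)$ gives the asserted equivalence of $\E$-algebras.

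\textbf{Main obstacle.} The genuinely non-formal input is Step 1: verifying that $\MP_{\C}$-based Amitsur descent holds for the $p$-complete motivic sphere. All the remaining ingredients (limit preservation of $\Omega^{0,*}_\mathrm{st}$, lax monoidality, and Lemma \ref{cosimplicial filtered level}) are then used in a routine way. Once descent is in hand, the proof is a matter of carefully tracking the cosimplicial $\E$-algebra structure through the equivalences of Step 2 and Step 3.
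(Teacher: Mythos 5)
Your proposal matches the paper's proof: both establish $S_{\C}\simeq\Tot(\MP_{\C}^{\otimes\bullet+1})$ via motivic Adams--Novikov convergence, commute $\Omega^{0,*}_\mathrm{st}$ past the totalization using filtered-degree-wise corepresentability, apply Lemma \ref{cosimplicial filtered level}, and promote to $\E$-algebras via lax symmetric monoidality of $\Omega^{0,*}_\mathrm{st}$ (the paper cites \cite[Proposition 3.7]{GIKR} for this last step). One small slip in your justification of Step 1: the direct sum $\MP_\C^{\otimes n+1}=\bigoplus_{i_0,\dots,i_n\in\Z}\Sigma^{2\sum i_j,\sum i_j}\mathrm{MGL}^{\otimes n+1}$ is \emph{not} finite at each cosimplicial degree; the correct observation is rather that $\MP_\C$ is a (retract of a) free $\mathrm{MGL}$-module, so $\MP_\C$-nilpotent completion agrees with $\mathrm{MGL}$-nilpotent completion, which $p$-completely recovers $S_\C$.
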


\begin{proof}
Just as in the non-motivic case, the convergence of the motivic Adams-Novikov spectral sequence implies that
$
S_{\C}\simeq \mathrm{Tot}(\MP_{\C}^{\otimes \bullet +1}),
$
i.e.\ the motivic sphere spectrum $S_{\C}$ (also denoted $S^{0, 0}_{\C}$ or $\mathbf 1$) is equivalent to the totalization of the Amitsur complex (or cobar construction) of $\MP_\C$. This implies that
\begin{eqnarray*}
\Omega^{0, *}_\mathrm{st}(S_{\C})&\simeq & \mathrm{map}_{\Sp_\C}(S^{0, *}, \mathrm{Tot}(\MP_{\C}^{\otimes \bullet +1})) \\
&\simeq & \Tot(\mathrm{map}_{\Sp_\C}(S^{0, *}, \MP_{\C}^{\otimes \bullet +1})) \\
&\simeq & \Tot(\tau_{\ge 2*}(\MP^{\otimes \bullet+1})),
\end{eqnarray*}
where the last equivalence is an application of Lemma \ref{cosimplicial filtered level}. That this equivalence of filtered spectra respects the $\E$-ring structures on both sides follows just like \cite[Proposition 3.7]{GIKR}.
\end{proof}

\subsection{The connective cover of the moduli stack of oriented formal groups}\label{Section 1.4}
Recall the stack of oriented formal groups $\M$ from \cite{ChromaticCartoon}. We showed in \cite[Corollary 2.3.7]{ChromaticCartoon} that it admits a simplicial presentation
$$
\M\simeq \left |\Spec(\MP^{\o \bullet +1})\right|.
$$
Hence it follows by \cite[Corollary 1.3.7]{ChromaticCartoon} that its connective cover is given by
$$
\tau_{\ge 0}(\M)\simeq \left|\Spec(\tau_{\ge 0}(\MP^{\o \bullet +1}))\right|.
$$

\begin{remark}
The spectral stack $\tau_{\ge 0}(\M)$ appears in \cite[Example  9.3.1.8]{SAG}, denoted there by $\CMcal{FG}^\mathrm{der}$,  under the name \textit{derived moduli stack of formal groups}. In spite of this suggestive terminology (which we reserve for the spectral stack $\mathcal M_\mathrm{FG}$, classifying formal groups in spectral algebraic geometry; see Section \ref{Section 1.5}), we are currently unaware of a convenient description of the functor of points of this stack, i.e.\ what kind of (preoriented) formal groups does it classify. This is in sharp contrast with the non-connective stack $\M$, which we know from \cite{ChromaticCartoon} to classify oriented formal groups.
\end{remark}

The $\i$-categories of quasi-coherent sheaves on the above two spectral stacks are therefore given by
$$
\QCoh(\M)\simeq \Tot(\MP^{\o \bullet +1}), \qquad \QCoh(\tau_{\ge 0}(\M))\simeq \Tot(\tau_{\ge 0}(\MP^{\o \bullet +1})).
$$
Recall from \cite[Subsection 1.5]{ChromaticCartoon} that there exists $k$-connective cover functors
$$
\tau_{\ge k} : \QCoh(\M)\to\QCoh(\tau_{\ge 0}(\M))
$$
for all $k\in \Z$, such that $\tau_{\ge 0}(\sO_{\M})\simeq \sO_{\tau_{\ge 0}(\M)}$. If a quasi-coherent sheaf $\sF\in\QCoh(\M)$ is presented as a compatible system of $\MP^{\o \bullet +1}$-module spectra $M_\bullet$, then the $k$-connective cover $\tau_{\ge k}(\sF)\in \QCoh(\tau_{\ge 0}(\M))$ is presented by the compatible system of $\tau_{\ge 0}(\MP^{\o \bullet+1})$-module spectra $\tau_{\ge k}(M_\bullet)$.

The following object will play one of the main roles in this paper, hence we give it a particularly simple name.

\begin{definition}
Let $\omega_{\M}\in\QCoh(\M)$ denote the dualizing line of the universal oriented formal group on $\M$, and by
$$
\omega :=\tau_{\ge 0}(\omega_{\M})\in\QCoh(\tau_{\ge 0}(\M))
$$
its connective cover. 
\end{definition}

By definition of an orietation of a formal group, there is a canonical equivalence of quasi-coherent sheaves
$\Sigma^2(\omega_{\M})\simeq \sO_{\M}$ on $\M$. Such an equivalence can of course not exist for the corresponding sheaves on $\tau_{\ge 0}(\M)$, and the following result shows precisely how much it fails.

\begin{lemma}\label{connective cover from beta}
There is a canonical cofiber sequence
$$
\Sigma^2(\omega)\xrightarrow{\beta} \sO_{\tau_{\ge 0}(\M)}\to \pi_0(\sO_{\tau_{\ge 0}(\M)})\simeq \sO_{\mathcal M^\heart_\mathrm{FG}}
$$
of quasi-coherent sheaves on $\tau_{\ge 0}(\M)$, which exhibits an equivalence
$$
\tau_{\ge 2}(\sO_{\M})\simeq \Sigma^2(\omega)
$$
in the $\i$-category  $\QCoh(\tau_{\ge 0}(\M))$.
\end{lemma}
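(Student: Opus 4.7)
The plan is to deduce both the cofiber sequence and the identification of its fiber from the standard Postnikov cofiber sequence of the connective structure sheaf $\sO_{\tau_{\ge 0}(\M)}$, together with the orientation equivalence on $\M$.

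First, I would establish the equivalence $\tau_{\ge 2}(\sO_\M)\simeq \Sigma^2(\omega)$ directly from the orientation equivalence $\beta_\M\colon \Sigma^2(\omega_\M)\simeq \sO_\M$ on $\M$. Applying the connective cover functor $\tau_{\ge 2}\colon \QCoh(\M)\to\QCoh(\tau_{\ge 0}(\M))$ to both sides and using the t-structural identity $\tau_{\ge 2}\Sigma^2\simeq \Sigma^2\tau_{\ge 0}$, which holds for the $\tau_{\ge k}$ of the excerpt by its level-wise definition in the simplicial presentation, yields
$$
\tau_{\ge 2}(\sO_\M)\simeq \tau_{\ge 2}(\Sigma^2\omega_\M)\simeq \Sigma^2\tau_{\ge 0}(\omega_\M)=\Sigma^2(\omega).
$$

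Second, for the cofiber sequence itself, I would invoke the standard Postnikov cofiber sequence of the connective sheaf $\sO_{\tau_{\ge 0}(\M)}\in\QCoh(\tau_{\ge 0}(\M))$:
$$
\tau_{\ge 2}\sO_{\tau_{\ge 0}(\M)}\to \sO_{\tau_{\ge 0}(\M)}\to \tau_{\le 1}\sO_{\tau_{\ge 0}(\M)}.
$$
Its fiber term agrees with $\tau_{\ge 2}(\sO_\M)\simeq \Sigma^2(\omega)$, both being presented level-wise by $\tau_{\ge 2}(\MP^{\o n+1})$. For the cofiber term, the key input is evenness of each $\MP^{\o n+1}$: working level-wise, $\pi_1(\MP^{\o n+1})=0$, so $\tau_{\le 1}(\MP^{\o n+1})\simeq \pi_0(\MP^{\o n+1})$. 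Evenness for $n=0$ is the classical fact that $\pi_*(\MP)$ is concentrated in even degrees; for $n\ge 1$, it follows by induction from the Künneth isomorphism $\pi_*(\MP^{\o n+1})\simeq \pi_*(\MP)^{\o(n+1)}$, which holds by flatness of $\pi_*(\MP)$ over $\mathbf Z$. Assembled across the simplicial presentation, this gives $\tau_{\le 1}\sO_{\tau_{\ge 0}(\M)}\simeq \pi_0\sO_{\tau_{\ge 0}(\M)}\simeq \sO_{\Mo}$, via the standard simplicial presentation $\Mo\simeq |\Spec(\pi_0(\MP^{\o\bullet+1}))|$ of the classical moduli stack of formal groups.

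The one bookkeeping obstacle I anticipate is matching the Bott map $\beta\colon\Sigma^2(\omega)\to \sO_{\tau_{\ge 0}(\M)}$, as constructed from the universal preorientation on $\tau_{\ge 0}(\M)$, with the canonical inclusion map of the Postnikov tower after the identification $\Sigma^2(\omega)\simeq \tau_{\ge 2}(\sO_\M)$. On each chart $\Spec(\tau_{\ge 0}(\MP^{\o n+1}))$ this unwinds to a tautology: the canonical orientation of the complex orientable even periodic ring $\MP^{\o n+1}$ identifies $\omega$ with $\Sigma^{-2}\tau_{\ge 2}(\MP^{\o n+1})$, and its preorientation truncates to the inclusion $\tau_{\ge 2}(\MP^{\o n+1})\hookrightarrow \tau_{\ge 0}(\MP^{\o n+1})$. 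The main care required is in globalizing this chart-level tautology to an intrinsic statement about the universal Bott maps on $\M$ and on $\tau_{\ge 0}(\M)$, rather than checking it chart by chart.
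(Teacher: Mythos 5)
Your proof is correct and follows essentially the same route as the paper: establish $\tau_{\ge 2}(\sO_{\M})\simeq\Sigma^2(\omega)$ via the orientation equivalence $\Sigma^2(\omega_{\M})\simeq\sO_{\M}$ and the commutation of connective covers with shifts, then identify the cofiber of $\beta$ level-wise along the simplicial presentation by $\tau_{\ge 0}(\MP^{\otimes\bullet+1})$, using evenness of the rings $\MP^{\otimes n+1}$ to collapse $\tau_{\le 1}$ to $\pi_0$ and the Hopf-algebroid presentation to recognize $\sO_{\Mo}$. The paper's proof is a terser version of exactly this: it states the chart-level cofiber sequence $\Sigma^2\tau_{\ge 0}(\MP)\to\tau_{\ge 0}(\MP)\to L$ directly (leaving the evenness and K\"unneth observations implicit) and descends it; and, like you, it resolves the bookkeeping about $\beta$ by simply taking $\beta$ to be the map furnished by the chain of canonical equivalences, so no separate chart-by-chart matching of Bott maps is required.
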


\begin{proof}
The second part, along with  the existence of the desired map of quasi-coherent sheaves $\beta$, follows from the series of canonical equivalences
$$
\Sigma^2(\omega)\simeq \Sigma^2(\tau_{\ge 0}(\omega_{\M}))\simeq \tau_{\ge 2}(\Sigma^2(\omega_{\M}))\simeq \tau_{\ge 2}(\sO_{\M})
$$
in $\QCoh(\tau_{\ge 0}(\M))$. To identify the cofiber of $\beta$, it suffices to observe that, upon pullback along the faithfully flat cover $\Spec(\tau_{\ge 0}(\MP))\to \tau_{\ge 0}(\MP)$, we obtain the cofiber sequence
$$
\Sigma^2(\tau_{\ge 0}(\MP))\xrightarrow{\beta}\tau_{\ge 0}(\MP)\to \pi_0(\tau_{\ge 0}(\MP))\simeq L.
$$
The same holds for all $\tau_{\ge 0}(\MP^{\otimes \bullet +1})$, and the cosimplicial commutative ring $\pi_0(\MP^{\otimes \bullet +1})$ is the (cosimplicially-extended) Hopf algebroid presentation $(L\rightrightarrows W)$ of the stack of oriented formal groups $\Mo$.
\end{proof}

\begin{remark}
Since $\tau_{\ge 2}(\sO_{\M})\simeq \tau_{\ge 2}(\sO_{\tau_{\ge 2}(\M)})$, we could have expressed the conclusion of Lemma \ref{connective cover from beta} purely in terms of $\tau_{\ge 0}(\M)$, without reference to the non-connective spectral stack $\M$. On the other hand, the statement we gave instead has the benefit of generalizing to $n$-connective covers for all $n\in\Z$ in  Proposition \ref{higher connective covers}, instead of just $n\ge 0$.
\end{remark}

We wish to extend the result of Lemma \ref{connective cover from beta} to smash powers of $\omega$ - see Proposition \ref{higher connective covers}. This will first require a brief discussion of flat quasi-coherent sheaves in non-connective spectral algebraic geometry.

\begin{definition}
For any non-connective spectral stack $X$, we say that a quasi-coherent sheaf $\sF\in\QCoh(X)$ is \textit{flat} if, for any map of the form $f:\Spec(A)\to X$, the sheaf $f^*(\sF)$ corresponds to a flat $A$-module in the sense of \cite[Definition 7.2.2.10]{HA} under the equivalence of $\i$-categories $\QCoh(\Spec(A))\simeq \Mod_A$.
Let $\QCoh^\flat(X)\subseteq \QCoh(X)$ denote the full subcategory spanned by flat quasi-coherent sheaves.
\end{definition}

Since relative smash product preserves flatness by \cite[Proposition 7.2.2.16, (1)]{HA}, it follows that for any map of non-connective spectral stacks $f:X\to Y$, the quasi-coherent pullback functor $f^*:\QCoh(Y)\to\QCoh(X)$ restricts to a functor between the subcategories of flat quasi-coherent sheaves $f^* :\QCoh^\flat(Y)\to\QCoh^\flat(X)$.

\begin{ex}\label{example}
The dualizing line $\omega_{\w{\G}}$ of any formal group $\w{\G}$ over an $\E$-ring $A$ is a flat $A$-module by definition. This implies that the dualizing line of the universal formal group $\omega_{\mathcal M_\mathrm{FG}}$ is a flat quasi-coherent sheaf on the spectral moduli stack of formal groups $\mathcal M_\mathrm{FG}$. Since the quasi-coherent sheaf $\omega_{\M}\in\QCoh(\M)$ is obtained by pullback along the canonical map $\M\to\mathcal M_\mathrm{FG}$, it follows that it is also flat.
\end{ex}

\begin{lemma}\label{flatness and connective cover}
Let $X$ ba a  non-connective spectral stack, and let $c : X\to \tau_{\ge 0}(X)$ be the canonical map to its connective cover. The functor $c^* : \QCoh(\tau_{\ge 0}(X))\to\QCoh(X)$ restrict to an equivalence of symmetric monoidal $\infty$-categories $\QCoh^\flat(\tau_{\ge 0}(X))\simeq \QCoh^\flat(X)$, with inverse given by $\sF\mapsto\tau_{\ge 0}(\sF)$.
\end{lemma}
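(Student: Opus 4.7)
The plan is to reduce to the affine case, where this becomes a classical fact about flat modules over an $\E$-ring and its connective cover. Recall that both flatness of quasi-coherent sheaves and the connective cover functor on spectral stacks are defined affine-locally. Concretely, upon choosing a (faithfully flat) affine cover $\Spec(A_i)\to X$, the connective cover fits into a pullback square so that $\Spec(\tau_{\ge 0}(A_i))\simeq \Spec(A_i)\times_X \tau_{\ge 0}(X)$ form an affine cover of $\tau_{\ge 0}(X)$, with transition maps given by $\tau_{\ge 0}$ of the ones for $X$. Under the resulting equivalences $\QCoh(X)\simeq \lim_i \Mod_{A_i}$ and $\QCoh(\tau_{\ge 0}(X))\simeq \lim_i \Mod_{\tau_{\ge 0}(A_i)}$, the functors $c^*$ and $\tau_{\ge 0}$ correspond termwise to base change along $\tau_{\ge 0}(A_i)\to A_i$ and the module-level connective cover, respectively. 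Both operations preserve flatness: base change by \cite[Proposition 7.2.2.16]{HA}, and connective cover because for a flat $A$-module $N$ the $\pi_0$ of $\tau_{\ge 0}(N)$ equals $\pi_0(N)$, which is flat over $\pi_0(A)=\pi_0(\tau_{\ge 0}(A))$, and higher homotopy of $\tau_{\ge 0}(N)$ matches $\pi_n(\tau_{\ge 0}(A))\otimes_{\pi_0}\pi_0(N)$ by the flatness formula for $N$.

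It therefore suffices to show that, for an $\E$-ring $A$, base change $M\mapsto A\otimes_{\tau_{\ge 0}(A)}M$ and connective cover $N\mapsto \tau_{\ge 0}(N)$ are mutually inverse equivalences between flat $\tau_{\ge 0}(A)$-modules and flat $A$-modules. This reduces to two homotopy-group checks: if $M$ is flat over $\tau_{\ge 0}(A)$ (hence connective with $\pi_0(M)$ flat over $\pi_0(A)$), then the base change $A\otimes_{\tau_{\ge 0}(A)}M$ has homotopy $\pi_*(A)\otimes_{\pi_0(A)}\pi_0(M)$, so truncating back recovers $M$; and if $N$ is flat over $A$, then $\tau_{\ge 0}(N)$ is flat over $\tau_{\ge 0}(A)$ as noted, and its base change to $A$ recovers $N$ by the same flatness formula.

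Finally, the symmetric monoidal enhancement is automatic: since $c^*$ is symmetric monoidal and is an equivalence on flat subcategories by the above, its inverse $\tau_{\ge 0}$ inherits a unique symmetric monoidal structure. The main technical point one must be careful with is the compatibility of the affine-local descriptions of flatness, of the connective cover functor, and of $\QCoh$ for non-connective spectral stacks; once this bookkeeping is in place, the essential content of the lemma is the elementary affine statement controlled by $\pi_0$.
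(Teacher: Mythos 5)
Your proposal is correct and reaches the right conclusion, but it follows a somewhat different route from the paper, and there is one point of generality worth flagging.

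The paper proves the lemma by observing that $\QCoh$ and $\QCoh^\flat$ on presheaves of $\E$-rings arise as right Kan extensions of $\Mod$ and $\Mod^\flat : \CAlg\to\CAlg(\Cat)$, while the connective cover of a stack is a (sheafified) left Kan extension of $\tau_{\ge 0}:\CAlg\to\CAlg^\mathrm{cn}$. This reduces the claim to a purely affine statement, object-wise in $A\in\CAlg$, with no geometricity hypothesis on $X$. Your proof instead chooses a faithfully flat affine cover $\Spec(A_i)\to X$ and uses the resulting limit decompositions of $\QCoh(X)$ and $\QCoh(\tau_{\ge 0}(X))$, together with the compatibility of connective covers with flat affine covers (a fact analogous to \cite[Corollary 1.3.7]{ChromaticCartoon}, which deserves a citation or justification). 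This only applies when $X$ admits such a cover, i.e.\ is geometric. The lemma as stated in the paper imposes no such hypothesis, and the Kan extension argument is what buys the extra generality for free. In practice this does not matter for the paper's applications, which all concern $\M$, a geometric stack; but you should be aware that your reduction is strictly less general than the one in the paper.

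For the affine case, you reprove it by direct computation with homotopy groups (using the flatness formula $\pi_n(N)\simeq \pi_n(A)\otimes_{\pi_0(A)}\pi_0(N)$), whereas the paper simply cites \cite[Proof of Proposition 7.2.2.16, (3)]{HA}. Your computation is correct and amounts to spelling out exactly the argument underlying that reference, so this is a harmless redundancy. The remark about the symmetric monoidal enhancement coming for free from inverting a symmetric monoidal equivalence is also correct.
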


\begin{proof}
On the level of presheaves (and indeed, both are invariant under sheafification), both functors $\QCoh, \QCoh^\flat : \Fun(\CAlg, \mS)^\mathrm{op}\to \CAlg(\Cat)$ are defined as right Kan extension from the respective functors $\Mod, \Mod^\flat :\CAlg\to \CAlg(\Cat)$. The connective cover functor for spectral stack is also obtained by (sheafifying) the left Kan extension of the connective cover functor $\tau_{\ge 0} : \CAlg\to\CAlg^\mathrm{cn}$ It therefore suffices to exhibit an equivalence between the functor $\Mod^\flat : \CAlg\to\CAlg(\Cat)$ and the composite
$$
\CAlg\xrightarrow{\tau_{\ge 0}} \CAlg^\mathrm{cn}\xrightarrow{\Mod^\flat}\CAlg(\Cat).
$$
The canonical map $\tau_{\ge 0}(A)\to A$, natural in the $\E$-ring $A$, indeed induces a homotopy between these two functors, and to verify that it is an equivalence, it suffices to check it object-wise. We are reduced to showing that $A\otimes_{\tau_{\ge 0}(A)} -: \Mod_{\tau_{\ge 0}(A)}^\flat\to\Mod_A^\flat$ is an equivalence of $\infty$-categories with inverse $\tau_{\ge 0} : \Mod_A^\flat\to\Mod_{\tau_{\ge 0}(A)}^\flat$, which is established in \cite[Proof of Proposition 7.2.2.16, (3)]{HA}.
\end{proof}

\begin{prop}\label{higher connective covers}
The map $\beta: \Sigma^2(\omega)\to \sO_{\tau_{\ge 0}(\M)}$  induces an equivalence
$$
\tau_{\ge 2n}(\sO_{\M})\simeq \Sigma^{2n}(\omega^{\o n})
$$
in the $\i$-category $\QCoh(\tau_{\ge 0}(\M))$ for every $n\in \Z$.
\end{prop}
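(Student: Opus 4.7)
The plan is to bootstrap from the orientation equivalence $\Sigma^2(\omega_{\M}) \simeq \sO_{\M}$ that exists on the non-connective stack $\M$, transport it to $\tau_{\ge 0}(\M)$ via connective covers, and leverage the fact that $\omega_{\M}$ is flat (Example \ref{example}) together with Lemma \ref{flatness and connective cover}.

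The first step is to observe that $\omega$ is an invertible object of $\QCoh(\tau_{\ge 0}(\M))$, with $\omega^{\o n} \simeq \tau_{\ge 0}(\omega_{\M}^{\o n})$ for every $n \in \Z$. Since $\omega_{\M}$ is an invertible sheaf and flat on $\M$, every tensor power $\omega_{\M}^{\o n}$ lies in $\QCoh^\flat(\M)$. By Lemma \ref{flatness and connective cover}, pullback along $c:\M\to\tau_{\ge 0}(\M)$ gives a symmetric monoidal equivalence $\QCoh^\flat(\tau_{\ge 0}(\M)) \simeq \QCoh^\flat(\M)$ with inverse $\tau_{\ge 0}$; applied monoidally, this yields $\tau_{\ge 0}(\omega_{\M}^{\o n}) \simeq \tau_{\ge 0}(\omega_{\M})^{\o n} = \omega^{\o n}$, so in particular $\omega^{\o n}$ is invertible.

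The second step is to transport the orientation equivalence. Tensoring $\sO_{\M} \simeq \Sigma^2(\omega_{\M})$ with $\omega_{\M}^{\o (n-1)}$ (and using invertibility for negative $n$) gives
\[
\sO_{\M} \;\simeq\; \Sigma^{2n}(\omega_{\M}^{\o n})
\]
in $\QCoh(\M)$, for every $n\in\Z$. Now apply the $2n$-connective cover functor $\tau_{\ge 2n}:\QCoh(\M)\to\QCoh(\tau_{\ge 0}(\M))$. Since $\tau_{\ge 2n}\circ \Sigma^{2n} \simeq \Sigma^{2n}\circ \tau_{\ge 0}$ by the standard shift behavior of connective covers, and since $\omega_{\M}^{\o n}$ is flat, we get
\[
\tau_{\ge 2n}(\sO_{\M}) \;\simeq\; \tau_{\ge 2n}\bigl(\Sigma^{2n}(\omega_{\M}^{\o n})\bigr) \;\simeq\; \Sigma^{2n}\bigl(\tau_{\ge 0}(\omega_{\M}^{\o n})\bigr) \;\simeq\; \Sigma^{2n}(\omega^{\o n}).
\]

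Finally, I would verify that this equivalence is induced by the $n$-fold tensor power $\beta^{\o n}:\Sigma^{2n}(\omega^{\o n}) \to \sO_{\tau_{\ge 0}(\M)}$, which is essentially a naturality check: the $n=1$ identification is the one from Lemma \ref{connective cover from beta}, and the general case is obtained by tensoring that identification $n$ times (inverting where necessary, which is legitimate by the first step). The main subtlety I expect is the handling of negative $n$: here one cannot argue directly by induction from Lemma \ref{connective cover from beta} since $\tau_{\ge 2n}$ for $n < 0$ is not the truncation of the connective cover but rather the genuine shift of the connective-cover functor on $\QCoh(\M)$; the invertibility of $\omega$ established in the first step is what makes the negative case go through uniformly with the positive one.
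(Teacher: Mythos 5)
Your proof is correct and follows essentially the same route as the paper's: transport the orientation equivalence $\Sigma^{2n}(\omega_{\M}^{\otimes n}) \simeq \sO_{\M}$ across the $2n$-connective cover functor, commute $\tau_{\ge 2n}$ with $\Sigma^{2n}$, and invoke flatness of $\omega_{\M}$ together with Lemma \ref{flatness and connective cover} to identify $\tau_{\ge 0}(\omega_{\M}^{\otimes n}) \simeq \omega^{\otimes n}$. Your closing remark that the resulting equivalence is induced by $\beta^{\otimes n}$ is a reasonable naturality check the paper leaves implicit, and your caution about negative $n$ is sound but, as you anticipate, dissolves once invertibility is in hand.
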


\begin{proof}
Similarly to the proof of Lemma \ref{connective cover from beta}, we can use the equivalence
$$
\Sigma^{2n}(\omega_{\M}^{\o n})\simeq (\Sigma^2( \omega_{\M}))^{\o n}\simeq \sO_{\M}^{\o n}\simeq \sO_{\M}
$$ 
of quasi-coherent sheaves on $\M$, to obtain the equivalence
$$
\tau_{\ge 2n}(\sO_{\M})\simeq \tau_{\ge 2n}(\Sigma^{2n}(\omega^{\o n}_{\M}))\simeq \Sigma^{2n}\tau_{\ge 0}(\omega_{\M}^{\o n})\simeq \Sigma^{2n}(\omega^{\o n})
$$
of quasi-coherent sheaves on $\tau_{\ge 0}(\M)$. The final equivalence here follows from the fact that the quasi-coherent sheaf $\omega_{\M}$ on $\M$ is flat, see Example \ref{example}, and that the functor $\tau_{\ge 0}:\QCoh^\flat(\M)\to\QCoh^\flat(\tau_{\ge 0}(\M))$ is symmetric monoidal by Lemma \ref{flatness and connective cover}.
\end{proof}

\subsection{Ind-coherent sheaves on spectral stacks over $\mathcal M_\mathrm{FG}$}\label{Section 1.5}
As in \cite[Subsection 2.1]{ChromaticCartoon}, we let $\mathcal M_\mathrm{FG}$ denote the spectral stack of formal groups in spectral algebraic geometry. We instead use $\Mo$ to denote its underlying ordinary stack, which by recovers the classical stack of formal groups in usual algebraic geometry. 

The following notion of ind-coherent sheaves does not coincide with the more general notion in derived algebraic geometry, e.g. \cite{GaRo}. It does not possess all the excellent formal properties of the latter (however, see Remark \ref{similarities with GaRo}). On the contrary, we define and use it only in a very specific and limited context, for which it is perfectly suited. This conforms to the usage of the term in \cite{Bartel Heard Valenzuela} and \cite{BHS}.

\begin{definition}\label{Def of IndCoh}
Let $f:X\to\mathcal M_\mathrm{FG}$ be a map of non-connective spectral stacks. The symmetric monoidal $\i$-category  of \textit{ind-coherent sheaves on $X$} is defined to be
$$
\IndCoh(X):=\Ind(\mD),
$$
where $\mD\subseteq\QCoh(X)$ is the thick subcategory (i.e.\ subcategory closed under finite limits, finite colimits, and extensions) spanned by the collection of quasi-coherent sheaves $f^*(\omega^{\o n}_{\mathcal M_\mathrm{FG}})$ for all $n\in \Z$.
\end{definition}

The inclusion $\mD\subseteq\QCoh(X)$ gives rise to a canonical functor $\IndCoh(X)\to\QCoh(X)$.
 The functor of \textit{global sections of an ind-coherent sheaf} $\sF\in\IndCoh(X)$ is taken to be composite
$$
\IndCoh(X)\to\QCoh(X)\xrightarrow{\Gamma(X; -)} \Sp.
$$
 It is clear that any map $g :X\to Y$ of spectral stacks over $\mathcal M_\mathrm{FG}$ induces a symmetric monoidal functor $g^* : \IndCoh(Y)\to\IndCoh(X)$, such that the diagram of symmetric monoidal $\i$-categories
 $$
\begin{tikzcd}
\IndCoh(Y) \ar{d}\ar{r}{g^*} & \IndCoh(X)\ar{d}\\
\QCoh(Y)\ar{r}{g^*} & \QCoh(X)
\end{tikzcd}
$$
commutes.

\begin{remark}
For $X =\M$ and $X=\Mo$, with the usual forgetful maps to $\mathcal M_\mathrm{FG}$, this recovers the $\i$-categories $\IndCoh(\M)$ of \cite[Definition 2.4.2]{ChromaticCartoon} and $\IndCoh(\Mo)$ of \cite[Definition 5.14]{BHS} respectively.
\end{remark}

\begin{remark}\label{Grothendieck prestable}
In the setting of Definition \ref{Def of IndCoh}, consider the full subcategory $\mD'\subseteq\QCoh(X)$, generated under finite colimits and extensions from the collection $f^*(\omega^{\o n}_{\mathcal M_\mathrm{FG}})$ for  $n\in \Z$. Clearly $\mD'\subseteq \mD$, and so the ind-completion $\IndCoh(X)_{\ge 0} := \Ind(\mD')$ admits a canonical fully faithful embedding into $\IndCoh(X)$. Its essential image may be identified with the full subcategory, spanned under colimits and extensions by the collection of ind-coherent sheaves $f^*(\omega^{\o n}_{\mathcal M_\mathrm{FG}})$ for  $n\in \Z$. It follows by \cite[Proposition 1.4.4.11]{HA} that there exists a $t$-structure on the stable $\i$-category $\IndCoh(X)$ for which $\IndCoh(X)_{\ge 0}$ is precisely the subcategory of connective objects. It follows from \cite[Proposition C.1.2.9, Example C.1.4.4]{SAG} that $\IndCoh(X)_{\ge 0}$ is a Grothendieck prestable $\i$-category in the sense of \cite[Definition C.1.4.2]{SAG} (for the analogous statement regarding quasi-coherent sheaves, see \cite[Proposition 9.1.3.1]{SAG}).
 If $X$ is a spectral stack (i.e.\, connective), then $\QCoh(X)$ also admits a canonical $t$-structure, and the functor $\IndCoh(X)\to\QCoh(X)$ is right $t$-exact; that is to say, it restricts to a colimit-preserving functor of Grothendieck prestable $\i$-categories $\IndCoh(X)_{\ge 0}\to \QCoh(X)^\mathrm{cn}$.
\end{remark}

We claim that $\omega\in \QCoh(\tau_{\ge 0}(\M))$ is of the required form for Definition \ref{Def of IndCoh} to apply. 

\begin{lemma}
There is a canonical map $f:\tau_{\ge 0}(\M)\to \mathcal M_\mathrm{FG}$ such that $\omega \simeq f^*(\omega_{\mathcal M_\mathrm{FG}}).$
\end{lemma}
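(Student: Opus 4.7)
The plan is to exploit the universal property of the connective cover and then identify the pullback of $\omega_{\mathcal{M}_\mathrm{FG}}$ using the flatness-based equivalence of Lemma \ref{flatness and connective cover}.

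First I would construct the map $f$. Since $\mathcal{M}_\mathrm{FG}$ is a connective spectral stack (it is built from connective $\E$-rings in the sense of \cite{SAG}), while $\M$ is non-connective, the canonical map $\M \to \mathcal{M}_\mathrm{FG}$ from Example \ref{example} must factor through the connective cover. Explicitly, for any affine chart $\Spec(A) \to \M$, postcomposition with $\M \to \mathcal{M}_\mathrm{FG}$ classifies a formal group over $A$ which, since formal groups in the sense of \cite{SAG} are defined over connective $\E$-rings, pulls back uniquely from a formal group over $\tau_{\ge 0}(A)$. Sheafifying (using the simplicial presentation $\tau_{\ge 0}(\M) \simeq |\Spec(\tau_{\ge 0}(\MP^{\otimes \bullet + 1}))|$ recalled in Subsection \ref{Section 1.4}) yields the desired map $f: \tau_{\ge 0}(\M) \to \mathcal{M}_\mathrm{FG}$, together with a commuting triangle with the connective cover map $c: \M \to \tau_{\ge 0}(\M)$.

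Next I would identify the pullback. By the functoriality of pullback along the commuting triangle, we obtain
$$
c^*(f^*(\omega_{\mathcal{M}_\mathrm{FG}})) \simeq (f \circ c)^*(\omega_{\mathcal{M}_\mathrm{FG}}) \simeq \omega_{\M}
$$
in $\QCoh(\M)$, using the defining identification from Example \ref{example}. By Example \ref{example}, $\omega_{\mathcal{M}_\mathrm{FG}}$ is flat, so its pullback $f^*(\omega_{\mathcal{M}_\mathrm{FG}})$ is again flat. Now Lemma \ref{flatness and connective cover} tells us that the functor $c^* : \QCoh^\flat(\tau_{\ge 0}(\M)) \to \QCoh^\flat(\M)$ is an equivalence with inverse $\tau_{\ge 0}$. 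Applying $\tau_{\ge 0}$ to both sides of the equivalence above and using the definition $\omega := \tau_{\ge 0}(\omega_{\M})$ yields
$$
f^*(\omega_{\mathcal{M}_\mathrm{FG}}) \simeq \tau_{\ge 0}(c^*(f^*(\omega_{\mathcal{M}_\mathrm{FG}}))) \simeq \tau_{\ge 0}(\omega_{\M}) \simeq \omega,
$$
as desired.

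The only delicate point is really the first step: constructing $f$ rigorously requires either invoking a universal property of the connective cover for maps into connective spectral stacks, or else equivalently presenting $f$ directly on the simplicial model as a map of cosimplicial $\E$-rings. The rest is a formal consequence of flatness of $\omega_{\mathcal{M}_\mathrm{FG}}$ and the equivalence of Lemma \ref{flatness and connective cover}.
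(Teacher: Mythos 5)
Your proof takes essentially the same route as the paper: construct $f$ via the universal property of the connective cover applied to the forgetful map $u:\M\to\mathcal M_\mathrm{FG}$, observe $c^*f^*(\omega_{\mathcal M_\mathrm{FG}})\simeq u^*(\omega_{\mathcal M_\mathrm{FG}})\simeq\omega_{\M}$, and then invert $c^*$ on flat sheaves via Lemma \ref{flatness and connective cover} to conclude $f^*(\omega_{\mathcal M_\mathrm{FG}})\simeq\tau_{\ge 0}(\omega_{\M})=\omega$. The only difference is that you spell out the construction of $f$ more explicitly (via affine charts and the simplicial presentation) where the paper simply invokes the universal property; both are correct and amount to the same thing.
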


\begin{proof}
The map $u : \M\to \mathcal M_\mathrm{FG}$, corresponding to ``forgetting the orientation'' of an oriented formal group, gives rise via the universal property of connective covers to a map $f:\tau_{\ge 0}(\M)\to \M$ for which the diagram of non-connective spectral stacks
$$
\begin{tikzcd}
\M \ar{r}{c}\ar{dr}[swap]{u}  & \tau_{\ge 0}(\M)\ar{d}{f} \\
  & \mathcal M_\mathrm{FG}
\end{tikzcd}
$$
commutes. That gives rise to equivalences
$$
c^*(f^*(\omega_{\mathcal M_\mathrm{FG}}))\simeq u^*(\omega_{\mathcal M_\mathrm{FG}})\simeq \omega_{\M}
$$
in $\QCoh^\flat(\M)$, and finally applying Lemma \ref{flatness and connective cover}, to $f^*(\omega_{\mathcal M_\mathrm{FG}})\simeq \tau_{\ge 0}(\omega_{\M}) = \omega$.
\end{proof}

\begin{cons}\label{construction of adjunction}
The map $\beta :\Sigma^2(\omega)\to\sO_{\tau_{\ge 0}(\M))}$ of Lemma \ref{connective cover from beta} induces a diagram
$$
\cdots \to \Sigma^{2(n+1)}(\omega^{\o n})\xrightarrow{\beta}\Sigma^{2n}(\omega^{\o n})\xrightarrow{\beta}\Sigma^{2(n-1)}(\omega^{\o n-1})\to\cdots
$$
of quasi-coherent sheaves on $\tau_{\ge 0}(\M)$. In fact, each of these quasi-coherent sheaves belongs to the thick subcategory $\mD\subseteq\QCoh(\tau_{\ge 0}(\M))$, thus we are looking at a functor  $\Omega^{2*}(\omega^{\o *}) : \mathbf Z_\mathrm{poset}^\mathrm{op}\to\mD.$ This functor is clearly symmetric monoidal, hence we may extend it to a symmetric monoidal functor $\Spfil\to\IndCoh(\tau_{\ge 0}(\M))$. 
\end{cons}

\begin{lemma}\label{existence of right adjoint}
The symmetric monoidal functor from Construction \ref{construction of adjunction} admits a colimit-preserving right adjoint.
\end{lemma}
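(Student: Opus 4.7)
The plan is to treat this as a two-part task: first establish the existence of a right adjoint $\Psi$ to the symmetric monoidal functor $\Phi \colon \Spfil \to \IndCoh(\tau_{\ge 0}(\M))$ of Construction \ref{construction of adjunction} via the adjoint functor theorem, then upgrade this to $\Psi$ being colimit-preserving by showing that $\Phi$ preserves compact objects. Both target categories are presentable and stable, so formally both steps are within reach of standard machinery from \cite{HTT}, \cite{HA}.

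First I would recall that $\Spfil = \Fun(\mathbf Z^{\mathrm{op}}_\mathrm{poset}, \Sp)$ is presentable and compactly generated, with a family of compact generators given by the Day-convolution ``shifted spheres'' $\mathbf 1(n)$ for $n \in \mathbf Z$, i.e.\ the filtered spectra taking value $S$ in degrees $\le n$ and $0$ elsewhere. Moreover, $\Spfil$ is the free stable presentable symmetric monoidal $\i$-category on the symmetric monoidal poset $\mathbf Z^{\mathrm{op}}_\mathrm{poset}$; in particular the symmetric monoidal extension in Construction \ref{construction of adjunction} is constructed precisely as the unique colimit-preserving symmetric monoidal extension of the diagram $\Omega^{2*}(\omega^{\o *}) \colon \mathbf Z^{\mathrm{op}}_\mathrm{poset} \to \mD \hookrightarrow \IndCoh(\tau_{\ge 0}(\M))$, and in particular sends $\mathbf 1(n) \mapsto \Sigma^{2n}(\omega^{\o n})$. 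Thus $\Phi$ is colimit-preserving by design, and the adjoint functor theorem supplies a right adjoint $\Psi$.

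For the colimit-preservation of $\Psi$, I would argue that since every generator $\mathbf 1(n)$ is sent by $\Phi$ into $\mD$, and $\mD$ consists of compact objects of $\IndCoh(\tau_{\ge 0}(\M)) = \Ind(\mD)$, the functor $\Phi$ sends compact generators to compact objects. Closing under finite colimits and retracts (which $\Phi$ preserves, and which preserve compactness on both sides), $\Phi$ preserves all compact objects. By the standard equivalence between ``left adjoint preserves compact objects'' and ``right adjoint preserves filtered colimits'' in the compactly generated setting, $\Psi$ preserves filtered colimits. Since $\Psi$ is a right adjoint between stable presentable $\i$-categories it is automatically exact, hence preserves finite colimits as well; combined, $\Psi$ preserves all small colimits.

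The main obstacle will be the bookkeeping: verifying cleanly that the extension of Construction \ref{construction of adjunction} is indeed the universal colimit-preserving symmetric monoidal extension (so that the images of the chosen compact generators are as claimed), and confirming that $\mD$, defined as a thick subcategory, really is contained in the compact objects of $\Ind(\mD)$. Once these formal identifications are pinned down, every step is routine abstract nonsense.
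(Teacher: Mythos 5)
Your proof is correct, but it takes a genuinely different route from the paper. The paper constructs the right adjoint explicitly: it builds an exact functor $\mD \to \Spfil$ by pairing $\Sigma^{2*}(\omega^{\o *})$ against a given coherent sheaf via $\o_{\sO}$ and $\Gamma(\tau_{\ge 0}(\M);-)$, and then takes the Ind-extension to get a colimit-preserving functor $\IndCoh(\tau_{\ge 0}(\M)) \to \Spfil$; both colimit-preservation and adjointness are then read off directly from this explicit construction, using that global sections is right adjoint to the constant-sheaf functor. You instead argue purely formally: the adjoint functor theorem gives existence of $\Psi$, and continuity of $\Psi$ follows from the standard ``left adjoint preserves compacts $\Leftrightarrow$ right adjoint preserves filtered colimits'' dictionary, since the generators $\mathbf 1(n)$ are sent into $\mD$, which is compact in $\Ind(\mD)$, and stability handles finite colimits. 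Both arguments are sound. The paper's version has the advantage that it produces the explicit formula $\sF \mapsto \Gamma(\tau_{\ge 0}(\M);\, \Sigma^{2*}(\omega^{\o *})\o_{\sO}\sF)$ for the right adjoint, which the paper records in the remark immediately afterwards and which is what is actually needed when identifying the resulting monad in Proposition \ref{IndCoh as filtered module}. Your version is shorter and more robust abstract nonsense, but leaves the right adjoint implicit, so you would need to redo essentially the paper's computation when identifying the $\mathbb E_\infty$-algebra $\Gamma(\tau_{\ge 0}(\M);\,\Sigma^{2*}(\omega^{\o*}))$ downstream.
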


\begin{proof}
We show how to construct such a functor $\IndCoh(\tau_{\ge 0}(\M))\to\Spfil$.
First  compose
$$
\mathbf Z_\mathrm{poset}^\mathrm{op}\times \mD\xrightarrow{\Sigma^{2*}(\omega^{\o *})\times \id} \mD\times \mD\subseteq \QCoh(\tau_{\ge 0}(\M))\times \QCoh(\tau_{\ge 0}(\M))
$$
with the quasi-coherent smash product and global sections functor
$$
\QCoh(\tau_{\ge 0}(\M))\times \QCoh(\tau_{\ge 0}(\M))\xrightarrow{-\o_{\sO} -}\QCoh(\tau_{\ge 0}(\M))\xrightarrow{\Gamma(\tau_{\ge 0}(\M); -)}\Sp.
$$
Together this gives a functor $\mathbf Z^{\mathrm{op}}_\mathrm{poset}\times \mD\to \Sp$, or by adjunction $\mD\to \Spfil$, and this functor is clearly exact. The desired colimit-preserving functor $\mathrm{IndCoh}(\tau_{\ge 0}(\M))\to \Spfil$ is thus obtained by the universal property of Ind-objects. Since global sections are right adjoint to the constant quasi-coherent sheaf functor, the adjointness claim follows from the construction of both of the functors involved.
\end{proof}

\begin{remark}\label{similarities with GaRo}
The continuity result of Lemma \ref{existence of right adjoint}, which will be used in the proof of Proposition \ref{IndCoh as filtered module}, highlights the importance of working with ind-coherent sheaves. Indeed, the construction of the right adjoint in question involves the functor of global sections $\Gamma(X; -):\QCoh(X)\to \Sp$, which does in general not preserve colimits. On the other hand, the ind-coherent global sections functor $\Gamma(X; -):\IndCoh(X)\to \Sp$ was defined in suich a way as to make it transparently colimit-preserving. Despite the difference between the two, this is a similarity between out notion of ind-coherent sheaves and that of \cite{GaRo}. One key property of the latter is that any map of derived stacks $f:X\to Y$ induces an adjunction $f_*:\IndCoh(X)\rightleftarrows \IndCoh(Y): f^!$, as opposed to the quasi-coherent sheaf adjunction $f^*:\QCoh(Y)\rightleftarrows \QCoh(X): f_*$. For $p:X\to *$ the terminal map, that recovers continuity of ind-coherent global sections $p_* = \Gamma(X; -)$, analogous to our setting.
\end{remark}

\begin{remark}
 The right adjoint functor $\IndCoh(\tau_{\ge 0}(\M))\to\Spfil$ of Lemma \ref{existence of right adjoint} is 
given object-wise by
$$
\sF\mapsto \Gamma(\tau_{\ge 0}(\M); \, \Sigma^{2*}(\omega^{\o *})\otimes_{\sO}\sF).
$$
Its symmetric monoidal left adjoint $\Spfil\to\IndCoh(\tau_{\ge 0}(\M))$ is a little harder to describe explicitly in general. But its value on the generators
$$
\mathbf 1(n)= ( \cdots \to 0\to 0\to S\xrightarrow{\id} S\xrightarrow{\id} S\to\cdots),
$$
with the first copy of $S$ appearing in filtered degree $n$, are not to hard to understand. It sends $\mathbf 1(n)\mapsto \Sigma^{2n}(\omega^{\o n})$.
\end{remark}

\begin{prop}\label{IndCoh as filtered module}
The adjunction $\Spfil\rightleftarrows\IndCoh(\tau_{\ge 0}(\M))$ of Construction \ref{construction of adjunction} and Lemma \ref{existence of right adjoint} is monadic, and exhibits a symmetric monoidal equivalence of $\i$-categories
$$
\IndCoh(\tau_{\ge 0}(\M))\simeq \Mod_{\Gamma (\tau_{\ge 0}(\M);\, \Sigma^{2*}(\omega^{\o *}))}(\Spfil).
$$
\end{prop}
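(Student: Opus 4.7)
The plan is to apply the Barr-Beck-Lurie monadicity theorem to the adjunction $L : \Spfil \rightleftarrows \IndCoh(\tau_{\ge 0}(\M)) : R$ built in Construction \ref{construction of adjunction} and Lemma \ref{existence of right adjoint}. Two hypotheses must be verified: that $R$ preserves $R$-split geometric realizations (in fact, all small colimits) and that $R$ is conservative. The first is immediate from Lemma \ref{existence of right adjoint}, which asserts precisely that $R$ is colimit-preserving.

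For conservativity, the key observation is that $L$ sends the standard generator $\mathbf{1}(n)$ of $\Spfil$ to $\Sigma^{2n}(\omega^{\otimes n}) \in \IndCoh(\tau_{\ge 0}(\M))$. If $\sF \in \IndCoh(\tau_{\ge 0}(\M))$ satisfies $R(\sF) \simeq 0$, then by adjunction
$$
\Map_{\IndCoh(\tau_{\ge 0}(\M))}\bigl(\Sigma^{k+2n}(\omega^{\otimes n}),\, \sF\bigr) \simeq \Map_{\Spfil}\bigl(\Sigma^k \mathbf{1}(n),\, R(\sF)\bigr) \simeq 0
$$
for all $k, n \in \Z$. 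By Definition \ref{Def of IndCoh}, $\IndCoh(\tau_{\ge 0}(\M)) = \Ind(\mD)$ with $\mD$ the thick subcategory spanned by $\omega^{\otimes n}$ for $n \in \Z$, which is automatically closed under arbitrary shifts. Hence the collection $\{\Sigma^k(\omega^{\otimes n})\}_{k, n \in \Z}$ forms a family of compact generators, and vanishing of the above mapping spectra forces $\sF \simeq 0$; this gives conservativity of $R$.

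Barr-Beck-Lurie then yields an equivalence of $\i$-categories $\IndCoh(\tau_{\ge 0}(\M)) \simeq \Mod_T(\Spfil)$ with $T = R \circ L$. Since $L$ is symmetric monoidal and colimit-preserving between presentable symmetric monoidal $\i$-categories, the standard theory of symmetric monoidal monadic adjunctions (see, e.g., \cite[Proposition 4.8.5.8]{HA}) identifies $T$ with the free-module monad for the $\E$-algebra $R(L(\mathbf 1)) = R(\sO_{\tau_{\ge 0}(\M)})$ in $\Spfil$, and makes the resulting equivalence symmetric monoidal. Using the explicit formula for $R$ recorded in the proof of Lemma \ref{existence of right adjoint}, we compute
$$
R(\sO_{\tau_{\ge 0}(\M)}) \simeq \Gamma\bigl(\tau_{\ge 0}(\M);\, \Sigma^{2*}(\omega^{\otimes *}) \otimes_{\sO} \sO_{\tau_{\ge 0}(\M)}\bigr) \simeq \Gamma\bigl(\tau_{\ge 0}(\M);\, \Sigma^{2*}(\omega^{\otimes *})\bigr),
$$
completing the identification. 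The main point of care is the conservativity argument, but it reduces cleanly to the match between the generators of $\Spfil$ and the compact generators of $\IndCoh(\tau_{\ge 0}(\M))$ fixed by Definition \ref{Def of IndCoh}.
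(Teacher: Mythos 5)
Your argument follows essentially the same abstract-monadicity strategy as the paper; the paper packages it by citing \cite[Proposition A.4]{BHS}, whereas you unpack Barr--Beck--Lurie directly. Your verification of conservativity (via $L(\mathbf 1(n))\simeq\Sigma^{2n}(\omega^{\o n})$ and the fact that these shifted tensor powers are a compact generating family for $\IndCoh(\tau_{\ge 0}(\M))$ by Definition \ref{Def of IndCoh}) and of colimit preservation of $R$ (from Lemma \ref{existence of right adjoint}) are both fine. So Barr--Beck--Lurie does give $\IndCoh(\tau_{\ge 0}(\M))\simeq\Mod_T(\Spfil)$ with $T=R\circ L$.

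The gap is in the next step, where you assert that ``the standard theory of symmetric monoidal monadic adjunctions'' identifies $T$ with the free-module monad for the $\E$-algebra $R(\mathbf 1_{\IndCoh})\simeq\Gamma(\tau_{\ge 0}(\M);\Sigma^{2*}(\omega^{\o*}))$. That identification requires the projection formula
$$
R\bigl(\sF\o L(X)\bigr)\simeq R(\sF)\o X,
$$
applied with $\sF=\sO_{\tau_{\ge 0}(\M)}$, and the projection formula is \emph{not} a formal consequence of $L$ being a colimit-preserving symmetric monoidal left adjoint together with $R$ preserving colimits. It holds here because $\Spfil$ is \emph{rigidly generated}: the objects $\mathbf 1(n)$ are dualizable, so the lax-structure comparison map $R(\sF)\o X\to R(\sF\o L(X))$ is an equivalence when $X=\mathbf 1(n)$, and then extends to all $X$ since both sides preserve colimits and the $\mathbf 1(n)$ generate. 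You never use or mention rigid generation, and the citation \cite[Proposition 4.8.5.8]{HA} does not supply it. The paper avoids this by invoking \cite[Proposition A.4]{BHS}, whose hypotheses explicitly include rigid generation of the source and compactness of the unit on the target precisely to guarantee the projection formula and the symmetric monoidal refinement of the equivalence. To close the gap you should either prove the projection formula as above, or cite \cite[Proposition A.4]{BHS} and check its hypotheses as the paper does.
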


\begin{proof}
This is a direct application of \cite[Proposition A.4]{BHS}. Indeed, the symmetric monoidal $\i$-category $\Sp^\mathrm{fil}$  is rigidly generated by the filtered spectra $\mathbf 1(n)$, the monoidal unit $\sO_{\tau_{\ge 0}(\M)}$ in $\IndCoh(\tau_{\ge 0}(\M))$ is compact, the right adjoint preserves colimits, and the essential image of the left adjoint contains a family of generators for $\IndCoh(\tau_{\ge 0}(\M))$, namely $\Sigma^{2n}(\omega^{\o n})$ for all $n\in\Z$.
\end{proof}

\begin{remark}
The above result is actually a special case of the ``deformation construction'' \cite[Proposition C.19]{BHS}. Only verification of the third point \cite[Definition C.13]{BHS} is not completely obvious, but may be handled by an application of Lemma \ref{flatness and connective cover}. Indeed, it requires us to verify that there is a  canonical homotopy equivalence
$$
\Map_{\IndCoh(\tau_{\ge 0}(\M))}(\Sigma^{2a}(\omega^{\o a}), \Sigma^{2b}(\omega^{\o b}))\simeq \Omega^\infty(S),
$$
induced by passage to global sections, for all $a>b$. To see that, we begin by noting that
$$
\Map_{\IndCoh(\tau_{\ge 0}(\M))}(\Sigma^{2a}(\omega^{\o a}), \Sigma^{2b}(\omega^{\o b})) \simeq
\Omega^{2(a-b)}\Map_{\IndCoh(\tau_{\ge 0}(\M))}(\sO, \omega^{\o (b-a)}),
$$
via standard limit-colimit preservation properties of Hom functors. Note that this already used the assumption that $a>b$. The remainder of the proof is to follow for any $n\ge 0$ the chain of canonical homotopy equivalences
\begin{eqnarray*}
\Omega^{2n}\Map_{\IndCoh(\tau_{\ge 0}(\M))}(\sO_{\tau_{\ge 0}(\M)}, \omega^{\o -n})
&\simeq&
\Omega^{2n}\Map_{\QCoh^\flat(\tau_{\ge 0}(\M))}(\sO_{\tau_{\ge 0}(\M)}, \omega^{\o -n})\\
&\simeq&
\Omega^{2n}\Map_{\QCoh^\flat(\M)}(\sO_{\M}, \omega_{\M}^{\o -n})\\
&\simeq&
\Omega^{2n}\Map_{\QCoh^\flat(\M)}(\sO_{\M}, \Sigma^{2n}(\sO_{\M}))\\
&\simeq&
\Map_{\QCoh^\flat(\M)}(\sO_{\M}, \Omega^{2n}\Sigma^{2n}(\sO_{\M}))\\
&\simeq&
\Omega^\infty(\sO(\M))\\
&\simeq &
\Omega^\infty (S).
\end{eqnarray*}
Here the first equivalence follows from the definition ind-coherent sheaves, the second equivalence follows from Lemma \ref{flatness and connective cover}, the third  equivalence is a consequence of the equivalence $\omega_{\M}\simeq \Sigma^{-2}(\sO_{\M})$ - a hallmark of oriented formal groups, the fourth equivalence follows from the standard exactness properties of Hom functors, the fifth equivalence is a result of the equivalence $\Omega^{2n}\Sigma^{2n}\simeq \mathrm{id}$, valid in the setting of any stable $\i$-category, and the final equivalence is \cite[Proposition 2.4.1]{ChromaticCartoon}.
\end{remark}

\subsection{Comparison to motivic spectra}\label{section main}
To obtain the main result we are after, it takes little but to combine everything we proved so far. We once again work with everything implicitly $p$-completed.

\begin{theorem}\label{main theorem}\label{section 6}
There is a canonical equivalence of symmetric monoidal $\i$-categories
$$
 \Sp^\mathrm{cell}_{\C}\simeq \IndCoh(\tau_{\ge 0}(\M)),
$$
compatible with the respective forgetful functors to $\Sp^\mathrm{fil}$ coming from Theorem \ref{Theorem Ctau} and Proposition \ref{IndCoh as filtered module}.
\end{theorem}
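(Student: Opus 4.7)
The plan is to identify the two $\E$-algebra objects in $\Spfil$ controlling the two sides of the desired equivalence, then invoke the module-level presentations already established. By Theorem \ref{Theorem Ctau}, we have $\Sp^\mathrm{cell}_\C \simeq \Mod_{\Omega^{0,*}_\mathrm{st}(S_\C)}(\Spfil)$, and by Proposition \ref{IndCoh as filtered module}, we have $\IndCoh(\tau_{\ge 0}(\M)) \simeq \Mod_{\Gamma(\tau_{\ge 0}(\M);\, \Sigma^{2*}(\omega^{\o *}))}(\Spfil)$. Thus it suffices to produce a canonical equivalence of $\E$-algebras
$$
\Gamma(\tau_{\ge 0}(\M);\, \Sigma^{2*}(\omega^{\o *})) \simeq \Omega^{0,*}_\mathrm{st}(S_\C),
$$
which is compatible with the two presentations.

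First I would identify the left-hand side. Proposition \ref{higher connective covers} gives an equivalence of filtered objects $\Sigma^{2*}(\omega^{\o *}) \simeq \tau_{\ge 2*}(\sO_{\M})$ in $\QCoh(\tau_{\ge 0}(\M))$, so the left-hand side is $\Gamma(\tau_{\ge 0}(\M);\, \tau_{\ge 2*}(\sO_\M))$. Using the simplicial presentation $\tau_{\ge 0}(\M) \simeq |\Spec(\tau_{\ge 0}(\MP^{\o \bullet+1}))|$ from Section \ref{Section 1.4} and flat descent for quasi-coherent sheaves, together with the compatibility of $k$-connective covers with affine pullback (also recalled in Section \ref{Section 1.4}), I obtain
$$
\Gamma(\tau_{\ge 0}(\M);\, \tau_{\ge 2*}(\sO_\M)) \simeq \Tot(\tau_{\ge 2*}(\MP^{\o \bullet+1})).
$$
Combined with Lemma \ref{comparison lemma}, which identifies the right-hand side $\Omega^{0,*}_\mathrm{st}(S_\C) \simeq \Tot(\tau_{\ge 2*}(\MP^{\o \bullet+1}))$ as $\E$-algebras, this produces the desired equivalence at the level of filtered spectra.

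Next I would upgrade this identification to one of $\E$-algebra objects in $\Spfil$. Both sides carry natural $\E$-ring structures: on the left, Construction \ref{construction of adjunction} exhibits $\Sigma^{2*}(\omega^{\o *})$ as the image of the monoidal unit under a symmetric monoidal functor $\Spfil \to \IndCoh(\tau_{\ge 0}(\M))$, so its global sections carry a lax-symmetric-monoidally induced $\E$-structure; on the right, Lemma \ref{comparison lemma} produces an $\E$-equivalence with the cobar totalization. Since the cobar totalization identification of the left-hand side also respects the $\E$-ring structure (via the multiplicative structure of $\MP$ and compatibility of $\tau_{\ge *}$ with the tensor product of flat quasi-coherent sheaves, cf.\ Lemma \ref{flatness and connective cover}), these identifications assemble to an equivalence of $\E$-algebras.

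The main obstacle, as I see it, is precisely the multiplicative coherence of the identification in the previous paragraph: showing that the $\E$-ring structure on $\Gamma(\tau_{\ge 0}(\M);\, \Sigma^{2*}(\omega^{\o *}))$ coming from the symmetric monoidal functor in Construction \ref{construction of adjunction} matches the $\E$-structure on the cosimplicial totalization $\Tot(\tau_{\ge 2*}(\MP^{\o \bullet+1}))$ coming from descent. This should be handled by noting that both arise functorially from the commutative algebra structure on $\sO_{\tau_{\ge 0}(\M)}$ together with the multiplicativity of the $t$-structure on flat sheaves; flatness of $\omega$ (Example \ref{example}) and Lemma \ref{flatness and connective cover} are precisely what permit the smash product to commute with the relevant connective covers. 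Once the $\E$-algebra equivalence is in hand, passing to module $\i$-categories yields the equivalence
$$
\Sp^\mathrm{cell}_\C \simeq \Mod_{\Omega^{0,*}_\mathrm{st}(S_\C)}(\Spfil) \simeq \Mod_{\Gamma(\tau_{\ge 0}(\M);\,\Sigma^{2*}(\omega^{\o *}))}(\Spfil) \simeq \IndCoh(\tau_{\ge 0}(\M)),
$$
which by construction is compatible with the two forgetful functors to $\Spfil$.
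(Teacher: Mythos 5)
Your proposal takes essentially the same route as the paper: identify both sides with filtered module categories via Theorem \ref{Theorem Ctau} and Proposition \ref{IndCoh as filtered module}, then match the corresponding $\E$-algebras in $\Spfil$ by chaining Proposition \ref{higher connective covers}, the compatible simplicial presentations of $\M$ and $\tau_{\ge 0}(\M)$, and Lemma \ref{comparison lemma}. The paper is terser about the multiplicative coherence you flag as the main obstacle, but the ingredients you cite (flatness of $\omega$ and Lemma \ref{flatness and connective cover}) are indeed what make it go through.
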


\begin{proof}
In light of Theorem \ref{Theorem Ctau} and Proposition \ref{IndCoh as filtered module}, both symmetric monoidal $\i$-categories in question are identified with filtered module $\i$-categories. Hence it suffices to identify the corresponding $\E$-algebra objects in $\Sp^\mathrm{fil}$. For that, note first that Proposition \ref{higher connective covers} induces an equivalence of filtered spectra
$$
\Gamma(\tau_{\ge 0}(\M);\, \Sigma^{2*}(\omega^{\o *}))\simeq \Gamma\big(\tau_{\ge 0}(\M);\, \tau_{\ge 2*}(\sO_{\M})\big),
$$
compatible with the $\E$-algebra structure.
Now, using the compatible simplicial presentations $\M\simeq \left|\Spec(\MP^{\otimes \bullet +1})\right|$ and $\tau_{\ge 0}(\M)\simeq \left|\Spec(\tau_{\ge 0}(\MP^{\otimes \bullet +1}))\right|$, we may identify
$$
\Gamma\big(\tau_{\ge 0}(\M);\, \tau_{\ge 2*}(\sO_{\M})\big)\simeq \Tot(\tau_{\ge 2*}(\MP^{\otimes \bullet +1})),
$$
which is precisely the $\E$-algebra in filtered spectra $\Omega^{0, *}_\mathrm{st}(S_{\C})$ by Lemma \ref{comparison lemma}.
\end{proof}

\begin{corollary}\label{dictionary}
Under the equivalence of Theorem \ref{main theorem}, we have the correspondence
\begin{eqnarray*}
S_{\C} &\leftrightarrow & \sO_{\tau_{\ge 0}(\M)}\\
S^{2,1} &\leftrightarrow & \omega^{\o-1}\\
S^{t,w} &\leftrightarrow&  \Sigma^{t-2w} (\omega^{\o-w})\simeq \Sigma^t({\tau}_{\ge -2w}(\sO_{\M})) \\
S^{0, -1}\xrightarrow{\tau} S_\C &\leftrightarrow & \Sigma^2(\omega)\xrightarrow{\beta}\sO_{\tau_{\ge 0}(\M)}\\
 S_\C/\tau &\leftrightarrow &  \sO_{\Mo}.
\end{eqnarray*}
In particular, we obtain an algebro-geometric formula for the bigraded homotopy groups of the motivic sphere spectrum, as quasi-coherent sheaf cohomology
$$
\pi_{t, w}(S_{\C})\simeq \mathrm H^{2w-t} (\tau_{\ge 0}(\M);\, \omega^{\o w}).
$$
\end{corollary}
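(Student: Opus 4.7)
The plan is to carry each entry of the dictionary across the zigzag
$$
\Sp_\C^\mathrm{cell}\simeq \Mod_R(\Spfil)\simeq \IndCoh(\tau_{\ge 0}(\M))
$$
afforded by Theorem \ref{Theorem Ctau}, Proposition \ref{IndCoh as filtered module}, and Theorem \ref{main theorem}, where $R:=\Omega^{0,*}_\mathrm{st}(S_\C)\simeq \Gamma\bigl(\tau_{\ge 0}(\M);\,\Sigma^{2*}(\omega^{\o *})\bigr)$. Write $\Phi$ for the resulting symmetric monoidal equivalence $\Sp_\C^\mathrm{cell}\DistTo \IndCoh(\tau_{\ge 0}(\M))$. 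Symmetric monoidality immediately gives $\Phi(S_\C)\simeq \sO_{\tau_{\ge 0}(\M)}$, the first entry.

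Next I would pin down $\Phi(S^{0,-1})$ together with the image of the map $\tau$. A direct unwinding of definitions gives $\Omega^{0,n}_\mathrm{st}(S^{0,-1})\simeq R_{n+1}$, so that, as an $R$-module in $\Spfil$, $S^{0,-1}$ is freely generated by $\mathbf 1(-1)$, and the map $\tau:S^{0,-1}\to S_\C$ is the image under the free $R$-module functor of the canonical map $\mathbf 1(-1)\to \mathbf 1(0)$ in $\Spfil$. On the sheaf side, Proposition \ref{higher connective covers} combined with the $\E_\infty$-algebra identification of Theorem \ref{main theorem} presents $\Sigma^2(\omega)$ likewise as the free $R$-module on $\mathbf 1(-1)$, with $\beta:\Sigma^2(\omega)\to\sO_{\tau_{\ge 0}(\M)}$ coming from the same canonical filtered map. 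Therefore $\Phi(S^{0,-1})\simeq \Sigma^2(\omega)$ and $\Phi(\tau)\simeq \beta$.

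The remaining entries follow formally from symmetric monoidality. Writing $S^{t,w}\simeq \Sigma^t\bigl((S^{0,-1})^{\o -w}\bigr)$ yields
$$
\Phi(S^{t,w})\simeq \Sigma^t\bigl(\Sigma^2(\omega)\bigr)^{\o -w}\simeq \Sigma^{t-2w}(\omega^{\o -w}),
$$
specializing at $(t,w)=(2,1)$ to $\Phi(S^{2,1})\simeq \omega^{\o -1}$, and coinciding with $\Sigma^t(\tau_{\ge -2w}(\sO_\M))$ by another application of Proposition \ref{higher connective covers}. The entry $S_\C/\tau\leftrightarrow \sO_{\Mo}$ follows by applying $\Phi$ to the cofiber sequence of $\tau$ and invoking Lemma \ref{connective cover from beta}, which identifies $\cofib(\beta)\simeq \sO_{\Mo}$. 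For the cohomology formula I would invoke the invertibility of $\omega$ in $\QCoh^\flat$ (Example \ref{example} together with Lemma \ref{flatness and connective cover}) to obtain
$$
\pi_{t,w}(S_\C)\simeq \pi_0\mathrm{map}_{\IndCoh}\bigl(\Sigma^{t-2w}(\omega^{\o -w}),\sO_{\tau_{\ge 0}(\M)}\bigr)\simeq \pi_{t-2w}\Gamma(\tau_{\ge 0}(\M);\,\omega^{\o w}),
$$
and then translate homotopical to cohomological indexing.

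The subtle point is promoting the identification of filtered $R$-modules to identification of the specific morphisms $\tau$ and $\beta$ rather than of some abstract filtration-shift map. This is however automatic once the $\E_\infty$-algebra equivalence $R\simeq \Gamma(\tau_{\ge 0}(\M);\,\Sigma^{2*}(\omega^{\o *}))$ of Theorem \ref{main theorem} is known to respect filtration structure, which is built into its proof via the common presentation $\Tot(\tau_{\ge 2*}(\MP^{\o \bullet +1}))$; everything else is formal bookkeeping.
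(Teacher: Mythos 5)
Your proposal is correct and follows the natural route the paper has in mind: the corollary is stated without proof because it is meant to be read off directly from the chain of equivalences constructed in Theorem~\ref{Theorem Ctau}, Proposition~\ref{IndCoh as filtered module}, and Theorem~\ref{main theorem}, exactly as you do. Your level-wise computation $\Omega^{0,n}_\mathrm{st}(S^{0,-1})\simeq R_{n+1}$, the matching identification of $\Sigma^2(\omega)$ with the free $R$-module on $\mathbf 1(-1)$ via the right adjoint $\sF\mapsto\Gamma\bigl(\tau_{\ge 0}(\M);\,\Sigma^{2*}(\omega^{\o *})\o\sF\bigr)$, the promotion to the specific maps $\tau\leftrightarrow\beta$ via the common cosimplicial presentation $\Tot(\tau_{\ge 2*}(\MP^{\o\bullet+1}))$, and the subsequent tensor-power and cofiber bookkeeping all check out and reproduce the stated dictionary, including the cohomological reindexing $\pi_{t,w}(S_\C)\simeq\pi_{t-2w}\Gamma(\tau_{\ge 0}(\M);\omega^{\o w})\simeq\mathrm H^{2w-t}(\tau_{\ge 0}(\M);\omega^{\o w})$.
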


The degeneration/deformation phenomena, as studied for instance in \cite{Gheorghe}, \cite{GWX}, \cite{GIKR}, are recovered easily from Theorem \ref{main theorem}. 

\begin{itemize}
\item
\underline{\textit{The generic fiber $\tau^{-1}$}:} To invert the $\tau$ in $\Sp_{\C}^\mathrm{cell}$, we should consider $\tau$-local objects. Under the dictionary of Corollary \ref{dictionary}, this corresponds to $\beta$-local objects in $\IndCoh(\tau_{\ge 0}(\M))$. But of course, inverting $\beta$ amounts to producing $\M$ out of $\tau_{\ge 0}(\M)$. And since maps of non-connective spectral stacks over $\mathcal M_\mathrm{FG}$, such as $\M\to\tau_{\ge 0}(\M)$, induce functors on ind-coherent sheaves, we obtain a chain of equivalences of $\i$-categories
$$
(\Sp^\mathrm{cell}_\C )^{\mathrm{Loc}(\tau)}\simeq \IndCoh(\tau_{\ge 0}(\M))^{\mathrm{Loc}(\beta)}\simeq \IndCoh(\M)\simeq \Sp.
$$
Only the last one does not follow from the previous discussion, and is instead the conclusion of \cite[Theorem 2.4.4]{ChromaticCartoon}.

\item
\underline{\textit{The special fiber $\tau = 0$} :} To set $\tau = 0$ in $\Sp^\mathrm{cell}_\C$, we shall consider modules over the quotient $S_{\mathbf C}/\tau$. Under the dictionary of Corollary \ref{dictionary}, this corresponds to modules over $\sO_{\tau_{\ge 0}(\M)}/\beta$ in $\IndCoh(\tau_{\ge 0}(\M))$. From the cofiber sequence of Lemma \ref{connective cover from beta}, we see that $\sO_{\tau_{\ge 0}(\M)}/\beta\simeq \sO_{\Mo}$. Hence the functoriality of ind-coherent sheaves along the canonical map of spectral stacks $\Mo\to\tau_{\ge 0}(\M)$ gives rise to the last in the chain of equivalences of $\i$-categories
$$
\Mod_{S_{\mathbf C}/\tau}(\Sp^\mathrm{cell}_\C )\simeq \Mod_{\sO_{\tau_{\ge 0}(\M)}/\beta}(\IndCoh(\tau_{\ge 0}(\M))\simeq \IndCoh(\Mo).
$$
\end{itemize}

\begin{remark}\label{Chow ts}
The $t$-structure on $\IndCoh(\tau_{\ge 0}(\M))$ from Remark \ref{Grothendieck prestable} coincides, under the equivalence of $\i$-categories of Theorem \ref{main theorem}, with the \textit{Chow $t$-structure} on cellular motivic spectra $\Sp_{\C}^\mathrm{cell}$ of \cite[Definition 2.25]{Chow t}. This is the $t$-structure determined through \cite[Proposition 1.4.4.11]{HA} by the subcategory $(\Sp_{\C}^\mathrm{cell})_{c\,\ge 0} \subseteq\Sp_\C^\mathrm{cell}$, which is spanned under extensions and colimits by the motivic sphere spectra $S^{t,w}$ with Chow(-Novikov) degree $c=t-2w \ge 0$. The equivalence of the latter with $\IndCoh(\tau_{\ge 0}(\M))_{\ge 0}$ in the sense of Remark \ref{Grothendieck prestable} now follows directly from Corollary \ref{dictionary}. The Chow $t$-structure is the essential instrument for approaching the ``cofiber of $\tau$ philosophy" in \cite{GWX}, and is the basis for its far-reaching integral (i.e.~non-$p$-complete) generalization over an arbitrary field (instead of $\C$) of \cite{Chow t}.
\end{remark}

\subsection{Comparison to synthetic spectra}\label{section synthetic}

Let us now drop the implicit $p$-completeness assumption of the previous section. Without it, the distinguished element $\tau\in\pi_{0, -1}(S_\C)$ no longer exists, making motivic spectra inaccessible through filtered spectra. Nonetheless, we may still obtain an ``integral'' comparison to another algebraic model, used to study these phenomena: the synthetic spectra of \cite{Pstragowski}.

Let us recall the construction of the $\i$-category $\mathrm{Sym}^\mathrm{ev}_E$ of \textit{even synthetic spectra based on $E$} from \cite[Definition 5.10]{Pstragowski} for any ring spectrum $E$ which is even Adams in the sense of \cite[Definition 5.8]{Pstragowski} (which includes all the ring spectra of interest to us here). 

\begin{cons}\label{cons synth}
First we define $\Sp^\mathrm{fpe}_E\subseteq \Sp^\mathrm{fin}$ to be the full subcategory spanned by all finite spectra $X$, such that $E_*(X)$ is a finitely-generated projective $\pi_*(E)$-module, concentrated purely in even degrees. Then $\mathrm{Sym}^\mathrm{ev}_E\subseteq \mathrm{Fun}^\pi ((\Sp^\mathrm{fpe}_E)^\mathrm{op}, \Sp)$ is the full subcategory  of product-preserving functors $F : (\mathrm{Sp}^\mathrm{fpe}_E)^\mathrm{op}\to\Sp$, satisfying the following additional condition. For any map $f: X\to Y$ in $\Sp_E^\mathrm{fpe}$ which induces a surjection $E_*(X)\to E_*(Y)$, the canonical map
$
F(Y)\to \fib( F(X)\to F(\fib(f)))
$
is an equivalence of spectra.
\end{cons}

The $\i$-category of synthetic spectra, which carries a canonical smash product symmetric monoidal structure, has several excellent properties, see \cite{Pstragowski}, \cite{Pstragowski on GH}, and \cite{BHS:Manifolds}.

\begin{theorem}\label{synthetic}
There is a canonical equivalence of symmetric monoidal $\i$-categories
$$
\IndCoh(\tau_{\ge 0}(\M))\simeq \mathrm{Syn}_{\mathrm{MP}}^\mathrm{ev}
$$
with even synthetic spectra based on the periodic complex bordism spectrum $\mathrm{MP}$.
\end{theorem}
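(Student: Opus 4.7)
The plan is to prove Theorem \ref{synthetic} by imitating the proof strategy of Theorem \ref{main theorem}: present both sides as $\i$-categories of module objects over a common $\E$-algebra in filtered spectra, and then match those algebras. The proof of Theorem \ref{main theorem} already supplies the identification
$$
\IndCoh(\tau_{\ge 0}(\M)) \simeq \Mod_{\Tot(\tau_{\ge 2*}(\MP^{\otimes \bullet+1}))}(\Spfil),
$$
coming from Proposition \ref{IndCoh as filtered module}, Proposition \ref{higher connective covers}, and the simplicial presentation of $\tau_{\ge 0}(\M)$. The task therefore reduces to establishing the analogous filtered module presentation on the synthetic side.

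For the synthetic side, the first step is a synthetic analogue of Theorem \ref{Theorem Ctau}: for any even Adams ring spectrum $E$, the lax symmetric monoidal functor $\Omega^{0, *}_{\mathrm{st}} : \mathrm{Syn}^{\mathrm{ev}}_E \to \Spfil$ of ``synthetic stable stems'' should induce a symmetric monoidal equivalence
$$
\mathrm{Syn}^{\mathrm{ev}}_E \simeq \Mod_{\Omega^{0, *}_{\mathrm{st}}(\nu S)}(\Spfil).
$$
This is either extractable from Pstragowski's foundational treatment \cite{Pstragowski}, or established directly via the deformation construction \cite[Proposition C.19]{BHS}, exactly in the manner used in the proof of Proposition \ref{IndCoh as filtered module}. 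The hypotheses are easy to verify: the synthetic analogues $\nu(\Sigma^{2n} E)$ for $n \in \mathbf Z$ supply a rigid family of generators, the right adjoint preserves colimits by construction of $\mathrm{Syn}^{\mathrm{ev}}_E$ as a subcategory of product-preserving functors out of $(\Sp^{\mathrm{fpe}}_E)^{\mathrm{op}}$, and the unit is compact.

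Next, I would compute $\Omega^{0, *}_{\mathrm{st}}(\nu S)$ for $E = \MP$, in parallel with Lemma \ref{comparison lemma}. The defining property of the synthetic analogue functor $\nu$ on a spectrum $X$ whose $\MP$-homology is free and concentrated in even degrees is the identification $\pi_{t,w}(\nu X) \simeq \pi_t(\tau_{\ge 2w}(X))$; applied level-wise to the Amitsur complex of $\MP$, this yields the synthetic counterpart of Lemma \ref{cosimplicial filtered level}:
$$
\Omega^{0, *}_{\mathrm{st}}(\nu(\MP^{\otimes \bullet+1})) \simeq \tau_{\ge 2*}(\MP^{\otimes \bullet+1})
$$
as cosimplicial filtered spectra. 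Totalizing and invoking the convergent synthetic Adams resolution $\nu S \simeq \Tot(\nu(\MP)^{\otimes \bullet+1})$ then gives
$$
\Omega^{0, *}_{\mathrm{st}}(\nu S) \simeq \Tot(\tau_{\ge 2*}(\MP^{\otimes \bullet+1})),
$$
matching the algebra on the spectral algebraic geometry side and completing the proof.

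The main obstacle is the first step: verifying, with sufficient care, that Pstragowski's synthetic spectra admit such a filtered module presentation over $\Omega^{0,*}_{\mathrm{st}}(\nu S)$. All of the relevant inputs — compactness, rigidity of the generating family, and continuity of the right adjoint — are morally present in the synthetic formalism, but assembling them into a clean application of \cite[Proposition C.19]{BHS} is the only step not covered by a direct appeal to Pstragowski's results or to our own Section \ref{Section 1.5}. Once that presentation is in hand, the remaining identification is a formal unwinding entirely parallel to Section \ref{section main}.
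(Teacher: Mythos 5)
Your proposal follows the same overall strategy as the paper: present both sides as filtered $\E$-ring module categories and match the $\E$-algebras. The paper's actual proof is shorter, however, because it simply cites (an even analogue of) \cite[Proposition C.12]{BHS} for the filtered module presentation of $\mathrm{Syn}^\mathrm{ev}_{\MP}$, rather than reconstructing it from \cite[Proposition C.19]{BHS} as you propose. Both routes are plausible, but yours postpones the real content into a step you yourself flag as incomplete, whereas Proposition C.12 already packages the synthetic-to-filtered presentation in a form ready to use.

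The one genuine gap is the convergence step: you write ``invoking the convergent synthetic Adams resolution $\nu S\simeq \Tot(\nu(\MP)^{\o\bullet+1})$'' without saying why it converges. What the cobar totalization computes in general is the $\tau$-complete unit $\mathbf 1^\wedge_\tau$, not $\nu S$, and your claimed equivalence $\mathrm{Syn}^\mathrm{ev}_E\simeq \Mod_{\Omega^{0,*}_{\mathrm{st}}(\nu S)}(\Spfil)$ for a general even Adams $E$ is in tension with this: after applying your Lemma-6.7-style computation and totalizing you land on $\Omega^{0,*}_{\mathrm{st}}(\mathbf 1^\wedge_\tau)$, and the two agree only if the sphere is $E$-nilpotent complete. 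For $E=\MP$ this is true, but it is exactly the input the paper makes explicit, citing $\MP$-nilpotent completeness of the sphere in the sense of \cite[Definition 9.16]{BHS:Manifolds} and \cite[Proposition A.11]{BHS:Manifolds} to conclude $\mathbf 1^\wedge_\tau\simeq \mathbf 1$ in $\mathrm{Syn}^\mathrm{ev}_{\MP}$. (Indeed, the paper's Variant \ref{variant} shows that for $E=E_n$ the naive module presentation yields $\Mod_{\mathbf 1^\wedge_\tau}(\mathrm{Syn}^\mathrm{ev}_{E_n})$, a genuinely different category, precisely because the sphere is not $E_n$-nilpotent complete.) You should make this nilpotent-completeness input explicit; as written it is smuggled into the word ``convergent.''

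Two smaller points: the indexing in your claimed identity $\pi_{t,w}(\nu X)\simeq\pi_t(\tau_{\ge 2w}X)$ should be stated carefully in the even-synthetic convention, and the commutation of $\Omega^{0,*}_{\mathrm{st}}$ with $\Tot$ holds because it is a right adjoint (i.e., built from mapping spectra out of compact bigraded spheres), which you should record as in the proof of Lemma \ref{comparison lemma}.
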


\begin{proof}
First we repeat the proof of Theorem \ref{main theorem} (which did not rely on the $p$-completeness assumption) to identify $\IndCoh(\tau_{\ge 0}(\M))$ with modules in $\Sp^\mathrm{fil}$ for the filtered $\E$-ring $\Tot(\tau_{\ge 2*}(\MP^{\o \bullet +1}))$. The desired result now follows from identifying $\mathrm{Sym}_{\MP}^\mathrm{ev}$ with the same $\i$-category of filtered modules spectra. That is the content of (an even analogue of) \cite[Proposition C.12]{BHS}, together with the observation that the sphere spectrum is $\MP$-nilpotent complete in the sense of \cite[Definition 9.16]{BHS:Manifolds}, and so $\mathbf 1^\wedge_\tau\simeq \mathbf 1$ holds in $\mathrm{Sym}^\mathrm{ev}_\mathrm{MP}$  by \cite[Proposition A.11]{BHS:Manifolds}.
\end{proof}

\begin{remark}\label{MU over MP}
We may replace  $\MP$ with the usual complex bordism spectrum $\mathrm{MU}$ in the statement of Theorem \ref{synthetic}. That is because, $\MP$ being a sum of even-degree suspensions of $\mathrm{MU}$, we have $\Sp_{\MP}^\mathrm{fpe}\simeq \Sp_\mathrm{MU}^\mathrm{fpe}$. The conditions that either of $\MP_*(X)\to \MP_*(Y)$ and $\mathrm{MU}_*(X)\to\mathrm{MU}_*(Y)$ is a surjection are likewise equivalent, hence it follows from the definition of synthetic spectra, recalled above as Construction \ref{cons synth}, that  there is a canonical equivalence of $\i$-categories $\mathrm{Syn}_{\MP}^\mathrm{ev}\simeq \mathrm{Syn}_\mathrm{MU}^\mathrm{ev}$.
\end{remark}

Repeating all the arguments from the previous sections, and the proof of Theorem \ref{synthetic}, but this time using the simplicial presentation
$$
\mathcal M^{\mathrm{or}, \le n}_\mathrm{FG}\simeq \left |\Spec(E_n^{\otimes \bullet +1})\right |
$$
of the moduli stack of oriented formal groups of height $\le n$, see \cite[Remark 3.2.7]{ChromaticFiltration}, we obtain (with a single extra ingredient of the Smash Product Theorem) the analogous result:

\begin{varianthm}\label{variant}
For any prime $p$ and height $1\le n < \infty$, there is a canonical equivalence of symmetric monoidal $\i$-categories
$$
\IndCoh(\tau_{\ge 0}(\mathcal M_\mathrm{FG}^{\mathrm{or}, \le n}))\simeq L_n{\mathrm{S}}\mathrm{yn}_{E_n}^\mathrm{ev}
$$
with $E_n$-local even synthetic spectra based on the Lubin-Tate spectrum $E_n$.
\end{varianthm}

\begin{proof}
By repeating everything so far with $E_n$ replacing $\MP$, and once again applying the even analogue of  \cite[Proposition C.12]{BHS}, we obtain a symmetric monoidal equivalence of $\i$-categories
$$
\IndCoh(\tau_{\ge 0}(\mathcal M_\mathrm{FG}^{\mathrm{or}, \le n}))\simeq \Mod_{\mathbf 1^\wedge_\tau}(\mathrm{Syn}^\mathrm{ev}_{E_n}).
$$
By \cite[Proposition A.11]{BHS:Manifolds}, we can identify $\mathbf 1^\wedge_\tau\simeq L_n\mathbf 1$ with the $L_n$-localization in synthetic spectra.  By the Smash Product Theorem of Hopkins-Ravenel, $L_n$ is a smashing localization, hence $L_n\Sp\simeq \Mod_{L_nS}$. Expressing  $L_n$-localization of a stable presentable symmetric monoidal $\i$-category $\mC$ via the Lurie tensor product on $\mathcal P\mathrm{r}^\mathrm{L}$, this allows us to identify 
$$
L_n\mC\simeq L_n\Sp\o \mC\simeq \Mod_{L_nS}\o \mC\simeq \Mod_{L_nS\otimes \mathbf 1}(\mC).
$$
In the case in question $\mC = \mathrm{Syn}_{E_n}^{\mathrm{ev}}$, this gives the desired result.
\end{proof}

This too admits a deformation picture, this time related to Goerss-Hopkins obstruction theory.
Here the generic fiber is the $E_n$-local stable category
$$
L_n\mathrm{Sp}\simeq \IndCoh(\mathcal M_\mathrm{FG}^{\mathrm{or}, \le n})\simeq\QCoh(\mathcal M_\mathrm{FG}^{\mathrm{or}, \le n}),
$$
(these equivalences of $\i$-categories are due to \cite[Theorem 3.3.1 and Remark 3.3.6]{ChromaticFiltration}), while the special fiber is the $\i$-category of sheaves $\IndCoh(\mathcal M^{\heart, \le n}_\mathrm{FG})\simeq \QCoh(\mathcal M^{\heart, \le n}_\mathrm{FG})$ on the ordinary moduli stack $\mathcal M^{\heart, \le n}_\mathrm{FG}$ of formal groups of height $\le n$.

\begin{remark}
Let us indicate a direct approach to comparing ind-coherent sheaves on $\tau_{\ge 0}(\M)$ and synthetic spectra, not relying on the results of \cite{BHS}, and consequently, on the intermediate comparison with filtered spectra. The first step is to  composite functor
$$
\Sp^\mathrm{fin}\xrightarrow{-\o\sO_{\M}}\QCoh(\M)\xrightarrow{\tau_{\ge 0}}\QCoh(\tau_{\ge 0}(\M)).
$$ 
The first map is fully faithful by \cite[Lemma 2.4.5]{ChromaticCartoon}. Recall that flatness of quasi-coherent sheaves satisfies flat descent, so it can be checked upon pullback along the cover $q:\Spec(\MP)\to\M$. For a finite spectrum $X$, the sheaf $X\o\sO_{\M}$ is therefore flat if and only if $q^*(X\o \sO_{\M})\simeq X\o \MP$ is. This in particular happens if $\MP_*(X)=\pi_*(\MP\o X)$ is a graded projective $\pi_*(\MP)$-module, i.e.\ if we have $X\in\mathrm{Sp}_{\MP}^\mathrm{fpe}$. In that case, the first functor in the above composite factors through the subcategory  of flat quasi-coherent sheaves $\QCoh^\flat(\M)\subseteq\QCoh(\M)$. Lemma \ref{flatness and connective cover} therefore show that we obtain a fully faithful functor
$$
\Sp^\mathrm{fpe}_\mathrm{MP}\xrightarrow{\tau_{\ge 0}(-\o \sO_{\M})}\QCoh^\flat(\tau_{\ge 0}(\M)).
$$
Furthermore,  this functor sends by Lemma \ref{higher connective covers} for any $n\in \Z$
$$
\Sigma^{2n}(S)\mapsto \tau_{\ge 0}(\Sigma^{2n}(\sO_{\M}))\simeq \Sigma^{2n}\tau_{\ge -2n}(\sO_{\M})\simeq \omega^{\o -n},
$$
verifying that its essential image of the functor in question is contained inside the thick subcategory $\mathcal D\subseteq\QCoh(\tau_{\ge 0}(\M))$ from  Definition \ref{Def of IndCoh}. It follows that $\IndCoh(\tau_{\ge 0}(\M))\simeq \Ind(\mathcal D)$ admits a full subcategory, equivalent to $\Sp^\mathrm{fin}$. In particular, this gives rise to a full subcategory of the Grothendieck prestable $\i$-category $\IndCoh(\tau_{\ge 0}(\M))_{\ge 0}$ of Remark \ref{Grothendieck prestable}, closed under finite coproducts and generating in the sense of \cite[Definition C.2.1.1]{SAG}. The Gabriel-Popescu Theorem \cite[Corollary C.2.1.10]{SAG} then gives rise to an adjunction
$$
\mathrm{Fun}^\pi((\mathrm{Sp}_\mathrm{MP}^\mathrm{fpe})^\mathrm{op}, \Sp)\rightleftarrows \IndCoh(\tau_{\ge 0}(\M))
$$
with a fully faithful right adjoint and a $t$-exact left adjoint. In particular, the right-hand side is some kind of reflective localization of the $\i$-category of the product-preserving functors $(\Sp^{\mathrm{fpe}}_{\MP})^{\mathrm{op}}\to\Sp$. Theorem \ref{synthetic} then reduces to verifying that this localization is the same one as the one induced by the condition on such spectrum-valued presheaves given in Construction \ref{cons synth}.
\end{remark}

\begin{remark}
In line with the description of the Chow $t$-structure from Remark \ref{Chow ts}, the  \textit{synthetic analogue} functor $\nu :\Sp\to \mathrm{Syn}_\mathrm{MP}^\mathrm{ev}$ of \cite[Definition 4.3]{Pstragowski} (really an even analogue thereof) may be expressed through the equivalence of $\i$-categories of Theorem \ref{synthetic}
as
$$
\Sp\simeq\IndCoh(\M)\simeq \IndCoh(\tau_{\ge 0}(\M))^{\mathrm{Loc}(\beta)}\xrightarrow{\tau_{\ge 0}}\IndCoh(\tau_{\ge 0}(\M)),
$$
where the first equivalence of $\i$-categories is the one from \cite[Theorem 2.1.4]{ChromaticCartoon}.
Compare this with the description of the analogous $\mathrm C_2$-equivariant version of the functor $\nu$ over the base-field $\mathbf R$ in \cite[Definition 6.17]{BHS}.
\end{remark}

\section{Periodic spectral stacks and the deformation picture}\label{Part 2}
We investigate the special properties of the spectral stacks $\M$ and $\tau_{\ge 0}(\M)$, that lead to the $\tau$-deformation picture.
Some of this section is inspired by the treatment of periodicity in \cite[Section 2]{Pstragowski on GH}.

\subsection{Non-connective basic affines}\label{subsection D}

In classical algebraic geometry, for a global function $f\in \sO(X)$ on a scheme (or stack) $X$, we have the basic affine $D_X(f)\subseteq X$, giving the canonical open subscheme (resp.~substack) structure to the open locus $\{f\ne 0\}$. In this subsection, we consider a generalization of this construction in two ways:
\begin{enumerate} 
\item We work in 
 non-connective spectral algebraic geometry as opposed to classical algebraic geometry. 
\item Whereas a global function $f$ on $X$ is equivalent to a map of quasi-coherent sheaves $f:\sO_X\to\sO_X$, we allow $f$ to have as its domain any invertible sheaf on $X$.
\end{enumerate}
\begin{definition}\label{f-locality}
Let $X$ be a  non-connective  spectral stack. Fix an invertible quasi-coherent sheaf $\mL$ on $X$ and map $f: \mL\to \sO_X$ in $\QCoh(X)$. A quasi-coherent sheaf $\sF$ on $X$ is \textit{$f$-local} if the map $f : \mL\o_{\sO_X}\sF\to \sF$ is an equivalence in the $\i$-category $\QCoh(X)$. We denote the full subcategory of $f$-local quasi-coherent sheaves by $\QCoh(X)^{\mathrm{Loc}(f)}\subseteq\QCoh(X)$.
\end{definition}

The full subcategory $\QCoh(X)^{\mathrm{Loc}(f)}\subseteq\QCoh(X)$ is easily seen to be closed under smash product of quasi-coherent sheaves, and as such inherits a symmetric monoidal structure.

\begin{prop}\label{f-localization}
The subcategory inclusion $\QCoh(X)^{\mathrm{Loc}(f)}\subseteq\QCoh(X)$ admits a symmetric monoidal left adjoint $\sF\mapsto \sF[f^{-1}]$, where $$\sF[f^{-1}]\simeq \varinjlim(\sF\xrightarrow{f}\mL^{\o -1}\o_{\sO_X}\sF\xrightarrow{f}\mL^{\o -2}\o_{\sO_X}\sF\to\cdots).$$
This exhibits $\QCoh(X)^{\mathrm{Loc}(f)}$ as a smashing localization of $\QCoh(X)$, which is to say that the canonical map $\sF\o_{\sO_X}\sO_{X}[f^{-1}]\to \sF[f^{-1}]$ is an equivalence for all $\sF\in\QCoh(X)$.
\end{prop}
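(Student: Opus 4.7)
The plan is to define $\sF[f^{-1}]$ by the sequential colimit formula in the statement and then verify, in order, that it is $f$-local, that the canonical unit $\sF \to \sF[f^{-1}]$ has the universal property of a left adjoint of the inclusion $\QCoh(X)^{\mathrm{Loc}(f)} \subseteq \QCoh(X)$, and that the resulting localization is smashing and symmetric monoidal. A useful preliminary observation is that, because the relative tensor product $- \otimes_{\sO_X} -$ on $\QCoh(X)$ preserves colimits in each variable, we have
$$\sF[f^{-1}] \simeq \Bigl(\varinjlim_n \mL^{\otimes -n}\Bigr) \otimes_{\sO_X} \sF \simeq \sO_X[f^{-1}] \otimes_{\sO_X} \sF,$$
so the smashing property is essentially built into the construction and most of the argument reduces to the case $\sF = \sO_X$.

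First, I would show that $\sO_X[f^{-1}]$ is itself $f$-local. Tensoring the sequential diagram defining it with $\mL$ produces the diagram obtained by shifting indices by one, so a cofinal reindexing yields an equivalence $\mL \otimes_{\sO_X} \sO_X[f^{-1}] \simeq \sO_X[f^{-1}]$. The subtle point is to identify this equivalence with the map actually induced by $f : \mL \to \sO_X$; once one unwinds the transition maps in the colimit, both maps are seen to agree after cofinal reindexing. Given this, $f$-locality of $\sF[f^{-1}]$ for general $\sF$ follows because $\QCoh(X)^{\mathrm{Loc}(f)}$ is a tensor ideal: tensoring any equivalence $\mL \otimes_{\sO_X} \sG \simeq \sG$ with a further sheaf preserves the equivalence.

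Second, to upgrade $\sF \mapsto \sF[f^{-1}]$ to a left adjoint of the inclusion, I would verify that for any $f$-local $\sG$ the canonical map
$$\Map_{\QCoh(X)}(\sF[f^{-1}], \sG) \longrightarrow \Map_{\QCoh(X)}(\sF, \sG)$$
is an equivalence. Rewriting the source as a sequential inverse limit of $\Map(\mL^{\otimes -n} \otimes_{\sO_X} \sF, \sG) \simeq \Map(\sF, \mL^{\otimes n} \otimes_{\sO_X} \sG)$ via invertibility of $\mL$, the $f$-locality of $\sG$ forces every transition map in this limit system to be an equivalence, and the limit collapses to $\Map(\sF, \sG)$.

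Finally, the smashing claim is precisely the formula $\sF[f^{-1}] \simeq \sO_X[f^{-1}] \otimes_{\sO_X} \sF$ established in the first step, so no further argument is needed for it. The symmetric monoidality of the localization functor then follows formally from the general yoga of smashing localizations of stable presentable symmetric monoidal $\i$-categories, cf.\ \cite[Proposition 4.8.2.10]{HA}. The main delicate point I expect is the cofinality/reindexing step used to produce the $f$-local equivalence $\mL \otimes_{\sO_X} \sO_X[f^{-1}] \simeq \sO_X[f^{-1}]$ via the map induced by $f$; everything else reduces to standard manipulations with sequential (co)limits and the closed symmetric monoidal structure on $\QCoh(X)$.
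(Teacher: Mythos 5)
The paper does not actually prove Proposition~\ref{f-localization}: the statement is given without an argument, and the proof block that follows belongs to Corollary~\ref{localization and modules}, which in fact \emph{uses} the Proposition's smashing claim as an input. It is evidently being treated as a standard fact about localizing a presentably symmetric monoidal stable $\infty$-category at a map out of an invertible object, so there is no author argument to compare against.

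Your reconstruction supplies the expected proof, and it is correct. The skeleton --- reduce via colimit-preservation of $\o_{\sO_X}$ to the case $\sF = \sO_X$, establish $f$-locality of $\sO_X[f^{-1}]$ by the shift argument on the defining telescope, verify the universal property by collapsing an inverse limit of mapping spaces using $f$-locality of the target, and conclude symmetric monoidality from the general theory of compatible monoidal localizations --- is sound, and you correctly isolate the one genuinely delicate step: identifying the cofinality equivalence on the shifted telescope with the map actually induced by $f$. Spelled out, after transposing through the invertible $\mL$, the map $\tilde f : \sO_X[f^{-1}] \to \mL^{\o -1}\o_{\sO_X}\sO_X[f^{-1}]$ is the colimit of the levelwise maps $\mL^{\o -n}\to\mL^{\o -(n+1)}$ induced by $f$, which are precisely the transition maps of the defining diagram; so it is the canonical shift-by-one map on the sequential colimit, and such a map is always an equivalence (the colimit cone maps $\mL^{\o -n}\to\sO_X[f^{-1}]$ are by definition compatible with the transition maps). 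That closes the gap you flagged. The later step, where $f$-locality of $\sG$ forces the transition maps $\Map(\sF,\mL^{\o (n+1)}\o_{\sO_X}\sG)\to\Map(\sF,\mL^{\o n}\o_{\sO_X}\sG)$ in the inverse system to be equivalences, is likewise correct since each is $f\o\id_{\mL^{\o n}\o\sG}$ and $f:\mL\o_{\sO_X}\sG\to\sG$ is an equivalence by hypothesis. In short, this is a complete and correct argument for a statement the paper leaves implicit.
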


\begin{corollary}\label{localization and modules}
There is a canonical equivalence of symmetric monoidal $\i$-categories
$$
\QCoh(X)^{\mathrm{Loc}(f)}\simeq \Mod_{\sO_X[f^{-1}]}(\QCoh(X)).
$$
\end{corollary}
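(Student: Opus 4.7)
My plan is to apply the general principle that a smashing localization of a symmetric monoidal presentable stable $\i$-category is equivalent to the $\i$-category of modules over the localized unit. Proposition \ref{f-localization} has already packaged precisely the smashing-localization hypothesis into a usable form, so the remaining work is to translate that statement into the desired module-theoretic form.

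Concretely, since $L(-) = (-)[f^{-1}]$ is a symmetric monoidal left adjoint, the object $\sO_X[f^{-1}] = L(\sO_X)$ inherits a canonical $\E$-algebra structure in $\QCoh(X)$, and is itself $f$-local (as the value of $L$ on any object lies in the essential image of $L$). I would first check that the forgetful functor $\Mod_{\sO_X[f^{-1}]}(\QCoh(X)) \to \QCoh(X)$ factors through $\QCoh(X)^{\mathrm{Loc}(f)}$: any $\sO_X[f^{-1}]$-module $M$ admits, via its action map, an equivalence $M \simeq M\otimes_{\sO_X[f^{-1}]} \sO_X[f^{-1}]$, from which $f$-locality is inherited from that of $\sO_X[f^{-1}]$. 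In the reverse direction, for any $f$-local sheaf $\sF$ the smashing identity $\sF \simeq \sF\otimes_{\sO_X}\sO_X[f^{-1}]$ from Proposition \ref{f-localization} equips $\sF$ with a canonical $\sO_X[f^{-1}]$-module structure, providing the promotion functor $\QCoh(X)^{\mathrm{Loc}(f)} \to \Mod_{\sO_X[f^{-1}]}(\QCoh(X))$.

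These two functors are mutually inverse: both verifications reduce to the idempotence of $\sO_X[f^{-1}]$, i.e.\ the equivalence $\sO_X[f^{-1}]\otimes_{\sO_X}\sO_X[f^{-1}]\simeq \sO_X[f^{-1}]$, which is immediate from $f$-locality of $\sO_X[f^{-1}]$ together with smashing. Symmetric monoidality is automatic, because on $f$-local objects the relative smash product $\otimes_{\sO_X[f^{-1}]}$ coincides with $\otimes_{\sO_X}$ (again by idempotence of $\sO_X[f^{-1}]$). Alternatively, one could bypass the direct construction and simply cite the general comparison between smashing localizations and module categories over idempotent $\E$-algebras in a symmetric monoidal presentable stable $\i$-category, e.g.\ \cite[Proposition 4.8.2.10]{HA}. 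I do not anticipate any serious obstacle here; the argument is entirely formal once Proposition \ref{f-localization} is in place, and in effect the corollary is little more than a packaging of that proposition.
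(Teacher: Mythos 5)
Your proof is correct, but it takes a different route from the paper. The paper verifies that the adjunction $(-)[f^{-1}] : \QCoh(X) \rightleftarrows \QCoh(X)^{\mathrm{Loc}(f)}$ is monadic by invoking the Barr--Beck--Lurie theorem: it checks that the right adjoint (the inclusion) preserves colimits, which reduces via the smashing identity of Proposition \ref{f-localization} to the observation that $-\otimes_{\sO_X}\sO_X[f^{-1}]$ preserves colimits. The identification of the resulting monad with $\sO_X[f^{-1}]\otimes_{\sO_X}-$ is then implicit. You instead take the idempotent-algebra route: you observe that $\sO_X[f^{-1}]$ is an idempotent $\E$-algebra (by smashing plus $f$-locality of $\sO_X[f^{-1}]$) and directly exhibit the two mutually inverse functors, checking both directions and symmetric monoidality by the idempotence $\sO_X[f^{-1}]\otimes_{\sO_X}\sO_X[f^{-1}]\simeq\sO_X[f^{-1}]$. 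Your approach is more self-contained on the symmetric-monoidal side, which the paper's monadicity argument leaves somewhat implicit, while the paper's Barr--Beck--Lurie proof has the side benefit (noted in the subsequent remark) of generalizing to show that any smashing localization of a presentably symmetric monoidal $\i$-category is monadic. One small point worth tightening in your write-up: the assertion that $f$-locality of an $\sO_X[f^{-1}]$-module $M$ ``is inherited'' from that of $\sO_X[f^{-1}]$ deserves a line of justification; the cleanest version is $\mL\otimes_{\sO_X}M\simeq(\mL\otimes_{\sO_X}\sO_X[f^{-1}])\otimes_{\sO_X[f^{-1}]}M$, after which $f$-locality of $\sO_X[f^{-1}]$ does the job. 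Both approaches are valid and lead to the same conclusion.
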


\begin{proof}
We must show that the adjunction
$$
(-)[f^{-1}]:\QCoh(X)\rightleftarrows \QCoh(X)^{\mathrm{Loc}(f)}
$$
is monadic. By the Bar-Beck-Lurie Theorem, if suffices to show that the right adjoint preserves colimits.
Consider a diagram $\sF_i\in \QCoh(X)^{\mathrm{Loc}(f)}$. Composing with the inclusion $\QCoh(X)^{\mathrm{Loc}(f)}\subseteq \QCoh(X)$, let $\sF := \varinjlim_i \sF_i$ denote the colimit computed in $\QCoh(\mX)$. Since the $f$-localization functor, being a left adjoint, preserves colimits, it follows that
$$
\sF[f^{-1}]\simeq \varinjlim_i (\sF_i)[\beta^{-1}]\simeq \varinjlim_i \sF_i,
$$
where the colimit on the right-hand-side is computed in $\QCoh(X)^{\mathrm{Loc}(f)}$. It therefore remains to show that the canonical map $\sF\to \sF[f^{-1}]$ is an equivalence in $\QCoh(X)$. That follows from the smashing localization assertion  $\sF[f^{-1}]\simeq \sF\o_{\sO_X}\sO_X[f^{-1}]$ of Proposition \ref{f-localization}, and the fact that smashing with $\sO_X[f^{-1}]$ (or any fixed quasi-coherent sheaf) commutes with colimits as a functor $\QCoh(X)\to\QCoh(X)$.
\end{proof}

\begin{remark}
The proof of Corollary \ref{localization and modules} shows more generally that any smashing localization in a presentably symmetric monoidal $\i$-category is monadic.
\end{remark}

To express $f$-local quasi-coherent sheaves more geometrically, 
we need to  embark on a short digression regarding affine morphisms in non-connective spectral algebraic geometry.

\begin{cons}
By the usual yoga of representability, a map of non-connective spectral stacks $f:\mY\to \mX$ is defined to be \textit{affine} if and only if for any $\E$-ring $A$ and map $\Spec(A)\to \mX$, the pullback morphism $\mY\times_{\mX}\Spec(A)\to \Spec(A)$ is an affine map of non-connective spectral schemes. In particular, that implies that $\mY\times_\mX\Spec(A)\simeq \Spec(B)$ for some $\E$-$A$-algebra $B\in\CAlg_A$. It follows that the quasi-coherent pushforward $f_*(\sO_\mY)$ caries a canonical structure of an $\E$-algebra in $\QCoh(\mX)$. Conversely, if $\sA\in\CAlg(\QCoh(\mX))$, then we may associate to it an affine morphism of non-connective spectral stacks $\Spec_\mX(\sA)\to\mX$. The universal property of the latter may be given by
$$
\Map_{(\mathcal S\mathrm{hv}_\mathrm{fpqc}^\mathrm{nc})_{/X}}(\mY, \Spec_\mX(\sA))\simeq \Map_{\CAlg(\QCoh(\mX))}(\sA, f_*(\sO_{\mY}))
$$
for any relative non-connective spectral stack $f:\mY\to \mX$. If $\mathrm{Aff}^\mathrm{nc}(\mX)\subseteq(\mathcal S\mathrm{hv}^\mathrm{nc}_\mathrm{fpqc})_{/X}$ denotes the full subcategory of affine morphisms to $\mX$, then the constructions $(f:\mY\to \mX)\mapsto f_*(\sO_{\mY})$ and $\Spec_\mX$ described above induce an equivalence of $\i$-categories
$$
\mathrm{Aff}^\mathrm{nc}(\mX)\simeq \CAlg(\QCoh(\mX))^\mathrm{op},
$$
thanks to (a non-connective analogue of) \cite[Proposition  6.3.4.5.]{SAG}.
\end{cons}

\begin{remark}
Note that in the above sense, a connective spectral stack $X$ admits affine morphisms $\mY\to X$ from many non-connective spectral stacks $\mY$. In fact, the $\mY\simeq \Spec_X(\sA)$ is a (connective) spectral stack if and only if the $\E$-algebra $\sA$ is a connective quasi-coherent sheaf on $X$.
\end{remark}

\begin{remark}
It is clear that, for any affine map $\mY\to \mX$ into a geometric non-connective spectral stack $\mX$, the non-connective spectral stack $\mY$ is also geometric. 
\end{remark}

\begin{prop}\label{sheaves on affines}
For any affine map of geometric non-connective spectral stacks $f:\mY\to \mX$, the quasi-coherent sheaf adjunction
$$
f^*:\QCoh(\mX)\rightleftarrows \QCoh(\mY) : f_*
$$
induces an equivalence of symmetric monoidal $\i$-categories
$$
\QCoh(\mY)\simeq \Mod_{f_*(\sO_\mY)}(\QCoh(\mX)).
$$
\end{prop}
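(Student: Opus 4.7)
The plan is to apply the Barr--Beck--Lurie monadicity theorem to the adjunction $f^* \dashv f_*$, and then identify the resulting monad with the one given by tensoring against the $\E$-algebra $f_*(\sO_\mY) \in \CAlg(\QCoh(\mX))$. The symmetric monoidal enhancement will then follow from the fact that $f^*$ is symmetric monoidal.

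First I would verify the two hypotheses of Barr--Beck--Lurie for $f_*$, namely that it is conservative and preserves all (or at least $f^*$-split) colimits. Both properties are fpqc-local on the target $\mX$, and quasi-coherent sheaves satisfy fpqc descent, so we may choose an fpqc atlas by affines $\Spec(A) \to \mX$ (which exists since $\mX$ is geometric) and reduce to the case $\mX \simeq \Spec(A)$. By the definition of affineness, we then have $\mY \simeq \Spec(B)$ for some $\E$-$A$-algebra $B$, and the adjunction becomes the standard extension/forgetful adjunction $(-\otimes_A B) \dashv \mathrm{oblv} : \Mod_B \rightleftarrows \Mod_A$, for which conservativity and preservation of all colimits are immediate. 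Thus Barr--Beck--Lurie applies and yields a natural equivalence $\QCoh(\mY) \simeq \Mod_T(\QCoh(\mX))$, where $T = f_* f^*$.

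Next I would identify $T$ with the monad $\sF \mapsto f_*(\sO_\mY) \otimes_{\sO_\mX} \sF$. This is the projection formula for the affine morphism $f$: the natural assembly map $f_*(\sO_\mY) \otimes_{\sO_\mX} \sF \to f_*(f^*\sF)$ can be tested affine-locally on $\mX$, where after the reduction above it becomes the canonical equivalence $B \otimes_A M \simeq M \otimes_A B$ for any $M \in \Mod_A$. The monad and algebra structures on the two sides match, because the unit and multiplication of $T$ restrict to those of the $\E$-algebra $f_*(\sO_\mY) = T(\sO_\mX)$. Plugging this identification into the monadic equivalence yields the underlying $\i$-categorical equivalence $\QCoh(\mY)\simeq \Mod_{f_*(\sO_\mY)}(\QCoh(\mX))$.

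Finally, for the symmetric monoidal refinement I would invoke the symmetric monoidal Barr--Beck statement (e.g.\ \cite[Corollary 4.8.5.19]{HA}): since $f^*$ is a symmetric monoidal left adjoint between presentably symmetric monoidal $\i$-categories, the monadic equivalence automatically upgrades to one of symmetric monoidal $\i$-categories, the right-hand side acquiring its standard symmetric monoidal structure as modules over a commutative algebra. The only potentially nontrivial step I anticipate is the projection formula in the non-connective generality; but since $f$ is affine and the question is local on $\mX$, this reduces cleanly to the associativity of tensor products over $A$, so no real obstruction remains.
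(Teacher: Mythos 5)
Your argument is correct and essentially the same in spirit as the paper's: both reduce the statement to the immediate affine case via fpqc descent for quasi-coherent sheaves. The paper does this by writing $\QCoh(\mX)$ and $\QCoh(\mY)$ as compatible totalizations over an affine atlas and invoking the Beck--Chevalley compatibility of monadicity with limits of adjunctions (\cite[Theorem 4.7.5.2]{HA}), which simultaneously gives both the monadicity and the identification of the monad. You instead verify the Barr--Beck hypotheses and the projection formula for the global adjunction by asserting they are ``fpqc-local on $\mX$.'' The one ingredient you should make explicit here is base-change for $f_*$: to know that conservativity, colimit preservation, and the assembly map can be checked after pullback along an affine atlas $g_{\mX}:\Spec(A)\to\mX$, you must know that the square relating $(f_A)_*$ and $f_*$ commutes, i.e.\ $g_{\mX}^* f_* \simeq (f_A)_* g_{\mY}^*$. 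This holds because $f$ is affine (it is precisely the Beck--Chevalley condition the paper cites), but ``quasi-coherent sheaves satisfy fpqc descent'' gives you only joint conservativity of the $g^*$'s, not this compatibility of the right adjoints; without it the reduction to the affine case does not go through. Once that point is supplied, the remainder of your argument (identification of the monad via the projection formula, symmetric monoidal upgrade) is fine.
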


\begin{proof}
If $\mX$, and consequently also $\mY$, is affine, this is immediate. Thanks to the geometricity assumption, we may write the $\i$-categories of quasi-coherent sheaves in questions as compatible totalizations. The non-affine result then follows, repeating the proof of its connective analogue \cite[Proposition 6.3.4.6]{SAG}, form the affine one by passing to totalizations, thanks to the Beck-Chevalley compatibility of monadicity with totalizations \cite[Theorem 4.7.5.2]{HA}.
\end{proof}

In the setting of Definition \ref{f-locality}, it follows from Proposition \ref{f-localization} that $\sO_X[f^{-1}]$ carries a canonical structure of an $\E$-algebra object in $\QCoh(X)$. The \textit{basic affine over $X$, determined by $f$} is then defined to be the non-connective spectral stack
$$
D_X(f) := \Spec_X(\sO_X[f^{-1}]).
$$
By definition, it admits a canonical (non-connective) affine morphism $D_X(f)\to X$.

\begin{prop}\label{QC is loc}
Assume that the  non-connective spectral stack $X$ is geometric.
The affine map of non-connective spectral stacks $D_X(f)\to X$ exhibits an equivalence of symmetric monoidal $\i$-categories
$$
\QCoh(D_X(f))\simeq \QCoh(X)^{\mathrm{Loc}(f)}.
$$
\end{prop}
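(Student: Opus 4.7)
The plan is to chain together the two equivalences that have already been established. First, observe that $D_X(f) = \Spec_X(\sO_X[f^{-1}])$ is, by construction, equipped with an affine morphism $\pi : D_X(f) \to X$, and the quasi-coherent pushforward satisfies $\pi_*(\sO_{D_X(f)}) \simeq \sO_X[f^{-1}]$ as $\E$-algebra objects in $\QCoh(X)$. Since $X$ is assumed geometric, the earlier remark guarantees that $D_X(f)$ is geometric as well. We may therefore apply Proposition \ref{sheaves on affines} to $\pi$, yielding a symmetric monoidal equivalence
$$
\QCoh(D_X(f)) \simeq \Mod_{\sO_X[f^{-1}]}(\QCoh(X)).
$$

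Second, by Corollary \ref{localization and modules}, the right-hand side is canonically identified with the symmetric monoidal $\i$-category of $f$-local sheaves:
$$
\Mod_{\sO_X[f^{-1}]}(\QCoh(X)) \simeq \QCoh(X)^{\mathrm{Loc}(f)}.
$$
Composing these two equivalences produces the desired symmetric monoidal equivalence. Under this composition, the pullback functor $\pi^* : \QCoh(X) \to \QCoh(D_X(f))$ is carried to the localization functor $\sF \mapsto \sF[f^{-1}]$ of Proposition \ref{f-localization}, which gives the expected compatibility.

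The key obstacle, such as it is, is bookkeeping rather than any substantive difficulty: one must verify that the $\E$-algebra $\pi_*\sO_{D_X(f)}$ in $\QCoh(X)$ agrees with $\sO_X[f^{-1}]$ as a symmetric monoidal object, so that the two cited results compose along a single $\E$-algebra. This is immediate from the defining universal property of $\Spec_X$, which exhibits the bijection between affine morphisms to $X$ and $\E$-algebras in $\QCoh(X)$ on the nose, and from the fact that Corollary \ref{localization and modules} identifies $\sO_X[f^{-1}]$ precisely as the $\E$-algebra whose module category is $\QCoh(X)^{\mathrm{Loc}(f)}$. No further input is required.
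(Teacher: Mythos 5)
Your proof is correct and takes exactly the same route as the paper, which likewise obtains the equivalence by composing Proposition \ref{sheaves on affines} (applied to the affine morphism $D_X(f)\to X$) with Corollary \ref{localization and modules}. You have simply spelled out the bookkeeping that the paper leaves implicit.
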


\begin{proof}
This follows from combining Corollary \ref{localization and modules} with Proposition \ref{sheaves on affines}.
\end{proof}

Returning to the classical setting of the start of Subsection \ref{subsection D}, a function $f\in \sO(X)$ in classical algebraic geometry does not give rise merely to the basic affine $D_X(f)$, but also to the closed subscheme $V_X(f)$, the vanishing locus $\{f = 0\}$, counted with multiplicity. It is given explicitly by $V_X(f)\simeq \Spec_X(\sO_X/f)$. Unfortunately, this construction can not be carried over to spectral algebraic geometry, non-connective or otherwise, in full generality. That is due to the quotients of $\E$-rings in general failing to inherit an $\E$-ring structure themselves; they may not even carry one, as is famously the case for Moore spectra $S/p$. Hence it is unclear how to make sense of $\sO_X/f$ in the spectral context. In the next subsection, we consider a special case in which this difficulty disappears.

\subsection{Effective Cartier divisors}

Recall,  e.g.\ from \cite[\href{https://stacks.math.columbia.edu/tag/01WQ}{Tag 01WQ}]{stacks-project}, the usual notion of effective Cartier divisors in ordinary algebraic geometry. An \textit{effective Cartier divisor} on an ordinary scheme $X$ is a closed immersion of ordinary schemes $D\to X$, such that the ideal sheaf $\sO_X (-D):=\mathrm{ker}(\sO_X\to\sO_D)$ is an invertible quasi-coherent sheaf on $X$. Most of these words have natural interpretations in spectral algebraic geometry as well. For instance:

\begin{definition}
A map of spectral stacks $Y\to X$ is a \textit{closed immersion} if it is affine, and the underlying map of ordinary stacks $Y^\heart\to X^\heart$ is a closed immersion in the sense of ordinary algebraic geometry.
\end{definition}

\begin{remark}
The condition that the map of underlying ordinary stacks is a closed immersion may be rephrased as follows. Since $Y\to X$ is a affine, it is fully determined by the $\E$-algebra object $\sO_Y\in\CAlg(\QCoh(X))$ (we are abusing notation by omitting to write a quasi-coherent push-forward). It being a closed immersion is then equivalent to the map of sheaves of commutative algebras $\pi_0(\sO_X)\to\pi_0(\sO_Y)$ being surjective.
\end{remark}

\begin{remark}
Since being a closed immersion is local for the flat topology, the definition of a flat immersion may yet equivalently be extended via descent from affines. There a map of connectives $\E$-ring $A\to B$ corresponds to a closed immersion $\Spec(B)\to\Spec(A)$ if and only if the ring map $\pi_0(A)\to\pi_0(B)$ is surjective.
\end{remark}

There is therefore nothing stopping us from defining an analogue of effective Cartier divisors in spectral algebraic geometry.

\begin{definition}\label{Def of effCart}
Let $X$ be a  spectral stack. An \textit{extended effective Cartier divisors on $X$} is a closed immersion of spectral stacks $D\to X$, such that $\sO_X(-D):=\mathrm{fib}(\sO_X\to\sO_D)$ is an invertible quasi-coherent sheaf on $X$.
\end{definition}

\begin{remark}\label{Remark 2.4.2}
Comparing Definition \ref{Def of effCart} to the usual notion of effective Cartier divisors in ordinary algebraic geometry, though the definitions are verbatim the same, the notion of an extended effective Cartier divisors is more general in a number of aspects.
\begin{enumerate}[label =(\arabic*)]
\item In spectral algebraic geometry, a closed immersion $D\to X$ into an ordinary scheme $X$ need not imply that the non-connective spectral stack $D$ itself is an ordinary scheme.
\item In our definition of $\sO_X(-D)$, we define it to be the $\i$-categorical fiber, a derived notion of the ordinary kernel.\label{point 2 of Remark 2.4.2}
\item The demand that $\sO_X(-D)$ be an invertible quasi-coherent sheaf is much less restrictive in spectral than its classical analogue is in ordinary algebraic geometry.\label{point 3 of Remark 2.4.2}
\end{enumerate}
\end{remark}

\begin{remark}
To expand upon point \ref{point 3 of Remark 2.4.2} or Remark \ref{Remark 2.4.2}, let us recall from  \cite[Section 2.9]{SAG} the distinction between invertible sheaves and line bundles in spectral algebraic geometry (connective or otherwise). A quasi-coherent sheaf on a (non-connective) spectral stack $X$ is \textit{invertible} if it is invertible with respect to the usual symmetric monoidal structure $\o_{\sO_X}$ on $\QCoh(X)$, and is a \textit{line bundle} if it is locally free of rank $1$, i.e.~locally isomorphic to $\sO_X$. Every line bundle is invertible, but not conversely. Indeed, an invertible sheaf may locally be of the form $\Sigma^n(\sO_X)$ for any $n\in \Z$. In accordance with the notation of \cite{SAG}, line bundles span the \textit{Picard $\i$-groupoid} $\mathscr P\mathrm{ic}(X)$, while invertible sheaves span the \textit{extended Picard $\i$-groupoid} $\mathscr P\mathrm{ic}^\dagger(X)$, of which only the first restricts to the usual notion from algebraic geometry upon restriction to an ordinary scheme $X$.
\end{remark}

The point \ref{point 2 of Remark 2.4.2} of Remark \ref{Remark 2.4.2} accounts for a way in which the spectral notion of extended effective Cartier divisors is better behaved than its classical counterpart: extended effective Cartier divisors pullback without any caveats.

\begin{prop}
Let $f:Y\to X$ be a map of spectral stacks. If $D\to X$ is an effective Cartier divisor, then so is $D\times_X Y\to Y$.
\end{prop}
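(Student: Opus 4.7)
My plan is to reduce everything to manipulating the cofiber sequence defining $\sO_X(-D)$, and then to pull it back. Start from the defining cofiber sequence
\[
\sO_X(-D)\to \sO_X\to \sO_D
\]
in $\QCoh(X)$, where by the assumption that $D\to X$ is an extended effective Cartier divisor, the first term is an invertible quasi-coherent sheaf on $X$, the last term is the $\E$-algebra corresponding to the affine map $D\to X$, and the surjectivity of $\pi_0(\sO_X)\to\pi_0(\sO_D)$ forces $\sO_X(-D)$ to be connective, hence a line bundle.

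Next I would apply the quasi-coherent pullback functor $f^*:\QCoh(X)\to\QCoh(Y)$, which is exact as a left adjoint between stable $\infty$-categories and thus preserves the cofiber sequence. Using that $\Spec_X$ is functorial and compatible with base change, I identify $f^*\sO_D\simeq \sO_{D\times_X Y}$ as an $\E$-algebra on $Y$, so that the pulled-back cofiber sequence reads
\[
f^*\sO_X(-D)\to \sO_Y\to \sO_{D\times_X Y}.
\]
Since invertibility is a symmetric monoidal property and $f^*$ is symmetric monoidal, $f^*\sO_X(-D)$ is again an invertible sheaf (indeed a line bundle, as line bundles pull back to line bundles). This shows the fiber $\sO_Y(-(D\times_X Y))\simeq f^*\sO_X(-D)$ is invertible, which is the second defining condition for an extended effective Cartier divisor.

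It remains to verify that $D\times_X Y\to Y$ is a closed immersion. Affineness is automatic since affine morphisms are stable under base change. For the closed-immersion property, I need $\pi_0(\sO_Y)\to\pi_0(\sO_{D\times_X Y})$ to be surjective. But in the pulled-back cofiber sequence above, both $Y$ and $\sO_Y$ are connective, and $f^*\sO_X(-D)$ is connective (being a line bundle), so the cofiber sits in a long exact sequence on homotopy sheaves yielding the surjection $\pi_0(\sO_Y)\twoheadrightarrow \pi_0(\sO_{D\times_X Y})$ directly from $\pi_{-1}(f^*\sO_X(-D))=0$.

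The only step that requires a little care is the base-change identity $f^*\sO_D\simeq \sO_{D\times_X Y}$ for affine maps between non-connective spectral stacks, since we need this at the level of $\E$-algebras in $\QCoh(Y)$; this however is exactly the statement that $\Spec_X(-)$ transforms pullbacks of algebras (which $f^*$ computes) into pullbacks of affine stacks over $Y$, which in turn follows from the universal property of $\Spec_X$ described earlier in the excerpt. Everything else is a formal consequence of the cofiber sequence and the fact that pullback along $f$ preserves cofiber sequences, invertibility, and connectivity.
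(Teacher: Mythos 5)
Your proof is correct and follows essentially the same route as the paper's: both pull back the fiber sequence $\sO_X(-D)\to\sO_X\to\sO_D$ along $f^*$, identify $f^*\sO_D\simeq\sO_{D\times_XY}$ via base change for affine morphisms, and invoke symmetric monoidality of $f^*$ to preserve invertibility. The paper simply asserts that closed immersions are stable under pullback, whereas you rederive this for the case at hand via the long exact sequence on homotopy sheaves; that is a harmless amount of extra detail.

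One step in your write-up is incorrect, though it does not damage the argument. You claim twice that $\sO_X(-D)$ (resp.\ $f^*\sO_X(-D)$) is a line bundle, citing connectivity plus invertibility. This implication is false: a connective invertible sheaf need not be locally free of rank $1$. Indeed the central example of the paper illustrates this -- for a periodic spectral stack $X$, the closed immersion $X^\heart\to X$ is an extended effective Cartier divisor with $\sO_X(-X^\heart)\simeq\tau_{\ge 1}(\sO_X)$, which is connective and invertible but (locally) equivalent to $\Sigma^n(\mL)$ for some $n\ge 1$, hence not a line bundle. Fortunately your argument only ever uses \emph{connectivity} of $\sO_X(-D)$ and $f^*\sO_X(-D)$: the former you establish correctly via the long exact sequence and the surjectivity hypothesis, and the latter follows since $f^*$ preserves connectivity for a map of (connective) spectral stacks. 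So you should simply drop the parenthetical ``hence a line bundle'' assertions and keep the connectivity reasoning; everything else is sound.
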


\begin{proof}
Let us denote $D' :=  D\times_X Y$. Since closed immersions are closed under pullback, it is clear that $D'\to Y$ is a closed immersions. On the other hand, observing that
\begin{eqnarray*}
\sO_Y(D') &\simeq& \mathrm{fib}(\sO_{D'}\to \sO_Y)\\
&\simeq& \mathrm{fib}(f^*(\sO_D)\to f^*(\sO_X))\\
&\simeq& f^*(\mathrm{fib}(\sO_D\to \sO_X))\\
&\simeq& f^*(\sO_X(-D)),
\end{eqnarray*}
invertibility of this quasi-coherent sheaf follows from the fact that the pullback functor $f^*$, being symmetric monoidal, preserves invertibility.
\end{proof}

In light of the compatibility with pullbacks, extended
effective Cartier divisors  assemble into a functor $\mathrm{CDiv}^\dagger_\mathrm{eff}:\CAlgcn\to \mS$.  Since both the conditions of being a closed immersion and an invertible sheaf satisfy flat descent, the functor $\mathrm{Div}$ is a spectral stack - the \textit{moduli stack of extended effective Cartier divisors}.

\begin{remark} The moduli stack of extended effective Cartier divisors may be given explicitly by
$$
\mathrm{CDiv}^\dagger_\mathrm{eff}(A)\simeq (\mathrm{CAlg}_{\pi_0(A)}^{\heart, \mathrm{surj}})^\simeq \times_{\CAlg_{\pi_0(A)}^\simeq}(\CAlg^\mathrm{cn}_A)^\simeq\times_{\Mod_A^\simeq}\mathscr P\mathrm{ic}^\dagger(A),
$$
where the one non-evident functor  is $\CAlg^\mathrm{cn}_A\to\Mod_A$, sending a connective $\E$-$A$-algebra $B$ to the $A$-module $\mathrm{fib}(A\to B)$.
\end{remark}

\begin{remark}
Alternatively, we could define an \textit{effective Cartier divisor} on a  spectral stack $X$ the same as above, but replacing every instance of `invertible sheaf' with `line bundle'. This would lead to a stack $\mathrm{CDiv}_\mathrm{eff}$, somewhat closer to classical effective Cartier divisors of ordinary algebraic geometry. We will have no use for that notion here, however.
\end{remark}

Let $D\to X$ be a an extended effective Cartier divisor. It is an affine map of non-connective spectral stacks,  so the ``divisor itself'' is given by $D := \Spec_X(\sO_D)$.  If we denote the canonical map by $i:\sO_X(-D)\to\sO_X$, then the \textit{complement of the divisor} $X-D\to X$ is taken to be 
$$
X- D :=D_X(i) \simeq \Spec_X(\sO_X[i^{-1}]);
$$
a special case of the non-connective basic affines of Subsection \ref{subsection D}.

\begin{remark}
Though we view the complement of an extended effective Cartier divisor $X-D\to X$ as a form of a geometric complement to the closed immersion $D\to X$, it is not necessarily true that it would be an open immersion. That is because open immersion are affine-locally of the form $A\to A[f^{-1}]$ for $f\in\pi_0(A)$, while the complement of an extended effective Cartier divisor may be affine-locally of the form $A\to A[f^{-1}]$ for $f\in \pi_n(A)$ and any $n\in \Z$. On the other hand, having an empty intersection with the divisor $D$, in the sense that $(X-D)\times_XD\simeq \emptyset$, justifies calling $X-D$ a complement.
\end{remark}

Both of the affine maps $D\to X$ and $X-D\to X$, associated to an extended effective Cartier divisor $D$ in $X$, admit a universal case on the moduli stack $\mathrm{CDiv}^\dagger_\mathrm{eff}$.

\begin{cons}
A map of non-connective spectral stacks $X\to \mathrm{CDiv}^\dagger_\mathrm{eff}$ is by design equivalent to specifying an extended effective Cartier divisors $D$ on  $X$. In particular, there exists a \textit{universal effective Cartier divisor} $D_\mathrm{univ}\to \mathrm{CDiv}_\mathrm{eff}^\dagger$, and the \textit{universal  complement} $\mathrm{CDiv}_\mathrm{eff}^\dagger -D_\mathrm{univ}\to\mathrm{CDiv}_\mathrm{eff}^\dagger$.
 For any effective Cartier divisor $D\to X$, equivalent to a map $X\to \mathrm{Div}$, there are pullback square of non-connective spectral stacks
$$
\begin{tikzcd}
D \ar{r} \ar{d} & X\ar{d} \\
D_\mathrm{univ}\ar{r} & \mathrm{CDiv}_\mathrm{eff}^\dagger,
\end{tikzcd}
 \qquad\qquad
 \begin{tikzcd}
X-D \ar{r} \ar{d} & X\ar{d} \\
\mathrm{CDiv}_\mathrm{eff}^\dagger -D_\mathrm{univ}\ar{r} & \mathrm{CDiv}_\mathrm{eff}^\dagger.
\end{tikzcd}
$$
\end{cons}

We view the cospan of  spectral stacks
$$
\mathrm{CDiv}_\mathrm{eff}^\dagger - D_\mathrm{univ}
\to\mathrm{CDiv}_\mathrm{eff}^\dagger\leftarrow D_\mathrm{univ}
$$
as an analogue in spectral algebraic geometry of the `deformation cospan' of ordinary stacks
$$
\Spec(\mathbf Z)\simeq \mathbf G_m/\mathbf G_m\to \mathbf A^1/\mathbf G_m\leftarrow\mathrm B\mathbf G_m,
$$
induced upon passage to quotients under the scaling action of $\G_m$ from the  complementary open and closed immersions $\G_m\to \mathbf A^1\leftarrow\{0\}$; the generic and special fiber respectively. To see the reason behind the claimed analogy to extended effective Cartier divisors, we consider the analogous situation in ordinary algebraic geometry.

\begin{remark}\label{connection def}
Recall from \cite[Definition 12.3.2]{Olsson} that a \textit{generalized effective Cartier divisor} (also known as a \textit{Deligne ``divisor''}) on an ordinary stack $X$ consists of a pair $(\mL, s)$ of a line bundle $\mL$ on $X$ and a map of quasi-coherent sheaves $s:\mL\to\sO_X$. Any effective Cartier divisor $D\to X$ is a special case of a generalized one, via the map $\sO_X(-D)\to \sO_X$. Generalized effective Cartier divisors are by \cite[Proposition 10.3.7]{Olsson} classified by the moduli stack $\mathbf A^1/\mathbf G_m$, the quotient stack of the scaling action of the multiplicative group $\G_m$ on the affine line $\A^1$. 
For any effective Cartier divisor $D\to X$, we therefore obtain the map of ordinary stacks $X\to \mathbf A^1/\mathbf G_m$. It fits into the `deformation diagram' 
$$ 
\begin{tikzcd}
X- D\ar{r} \ar{d} &X\ar{d} & D \ar{l}\ar{d}\\
 \Spec(\Z)\ar{r} & \mathbf A^1/\G_m & \mathrm B\mathbf G_m\ar{l},
\end{tikzcd}
$$
in which 
 both squares are pullback squares in ordinary stacks. 
\end{remark}

\begin{remark}\label{nogolemma}
Some aspects of the generalized effective Cartier divisor story carry over with little change to the context of spectral algebraic geometry\footnote{A different, though closely related, analogue of generalized effective Cartier divisors in derived algebraic geometry was studied in \cite[Subsection 3.2]{Khan}.}. We must replace the ordinary stack $\mathrm B\mathbf G_m$, classifying line bundles on ordinary schemes, with the  spectral stack $\mathrm{BGL}_1^\dagger$ from  \cite[Construction 4.1.5]{Elliptic 1}, classifying invertible quasi-coherent sheaves. This is a delooping of the spectral group scheme $\GL_1$ of  \cite[Subsection 1.6.3]{Elliptic 2}, but not the usual connected delooping $\mathrm{BGL}_1$, which classifies line bundles instead. Despite thus not being a classifying stack of a group itself, we can still treat $\mathrm{BGL}_1^\dagger$ as close to it. For instance, we can define a  spectral stack $\mathbf A^1/\mathrm{GL}_1^\dagger$, classifying pairs $(\mL, s)$ of an invertible quasi-coherent sheaf $\mL$ and a map $s:\mL\to\sO_X$. This is a spectral analogue of the ordinary stack $\A^1/\G_m$, and even fits into a pullback square
$$
\begin{tikzcd}
\A^1\ar{r} \ar{d} & \Spec(S) \ar{d} \\
\textup{$\mathbf A^1/\GL_1^\dagger$} \ar{r} & \textup{$\mathrm{BGL}_1^\dagger$}.
\end{tikzcd}.
$$
Here $\A^1 = \Spec(S\{t\})$ is the smooth affine line over $\Spec(S)$, i.e.\ the affine spectral scheme corresponding to the free $\E$-ring $S\{t\}$.
Considering only those map $s:\sL\to \sO_X$ which are equivalences of quasi-coherent sheaves, produces a canonical map $\Spec(S)\to\A^1/\GL_1^\dagger$. Conversely, any invertible bundle $\sL$ may be associated the zero map $\sL\to 0\to\sO_X$, giving a canonical map $\mathrm{BGL}^\dagger_1\to\mathbf A^1/\GL_1^\dagger$.
\end{remark}

\begin{remark}\label{Remark problems 1}
Continuing with setting of the previous Remark,  there is, as in the classical case, a canonical map $\mathrm{CDiv}_\mathrm{eff}^\dagger\to\mathbf A^1/\mathrm{GL}_1^\dagger$. For any spectral stack $X$, it sends an extended effective Cartier divisor $D\to X$ to the map $\sO_X(-D)\to \sO_X$ in $\QCoh(X)$. This induces a commutative diagram of non-connective spectral stacks
$$
\begin{tikzcd}
X- D\ar{r} \ar{d} &X\ar{d} & D \ar{l}\ar{d}\\
\textup{$\mathrm{CDiv}^\dagger_\mathrm{eff}$} - D_\mathrm{univ}  \ar{r} \ar{d} & \textup{$\mathrm{CDiv}_\mathrm{eff}^\dagger$}\ar{d} & D_\mathrm{univ} \ar{l}\ar{d}\\
 \Spec(S)\ar{r} & \textup{$\mathbf A^1/\GL_1^\dagger$} & \textup{$\mathrm{BGL}_1^\dagger$}\ar{l},
\end{tikzcd}
$$
in which the upper two inner squares  are pullback squares, so is the lower left-hand inner square, and consequently so is the big left-hand square. 
The point of departure from the classical story, recounted in Remark \ref{connection def} is that the right-hand square, and consequently the big right-hand one,  is not Cartesian. Indeed, the pullback $X\times_{\mathbf A^1/\mathrm{GL}_1^\dagger}\mathrm{BGL}_1^\dagger\to X$ is a closed immersion into $X$, corresponding to the $\mathbb E_\infty$-algebra quotient of the map $\sO_X(-D)\to \sO_X$. That is to say, it corresponds to the quasi-coherent $\mathbb E_\infty$-$\sO_X$-algebra $\mathrm{Sym}_{\sO_X}^*(0)\o_{\Sym^*_{\sO_X}(\sO_X(-D))}\sO_X.$ On the other hand, the closed immersion $D\to X$ corresponds to the quasi-coherent $\E$-$\sO_X$-algebra $\sO_D\simeq \mathrm{cofib}(\sO_X(-D)\to \sO_X)$, with the cofiber computed in the $\i$-category $\QCoh(X)$. Unlike if we were performing these two quotienting operation with simplicial commutative algebras, i.e.\ in the world of derived algebraic geometry, they in general do not coincide in the present $\E$-ring setting.
 Therefore, in the spectral context, the `deformation picture' genuinely needs to be phrased in terms of extended effective Cartier divisors, instead of in terms of $\mathbf A^1/\GL_1^\dagger$.
\end{remark}

\subsection{Periodic spectral stacks}
In the world of connective spectral algebraic geometry, and spectral stack $X$ admits a canonical map $X^\heart\to X$ from its underlying ordinary stack $X^\heart$. Since this map is always affine by \cite[Corollary 9.1.6.7]{SAG}, and induces an isomorphism on underlying ordinary stacks, it is a closed immersion. It is natural to ask when it is an effective divisor.

\begin{definition}
A geometric spectral stack $X$ is \textit{periodic} if the canonical map from the underlying ordinary stack  $X^\heart\to X$ is an extended effective Cartier divisor.
\end{definition}

If $X^\heart\to X$ is an extended effective Cartier divisor, its fundamental fiber sequence is the (co)fiber sequence
$$
\tau_{\ge 1}(\sO_X)\to \sO_X\to \pi_0(\sO_X)\simeq \sO_{X^\heart}
$$
in $\QCoh(X)$.
It follows that a geometric spectral stack  $X$ is  periodic if and only if its $1$-connective cvoer $\tau_{\ge 1}(\sO_X)$ is an invertible quasi-coherent sheaf on $X$.

\begin{remark}
A large class of examples of periodic spectral stacks are therefore \textit{$n$-periodic spectral stacks}, i.e.\ such geometric spectral stacks $X$ for which $\tau_{\ge 1}(\sO_X)\simeq \Sigma^n(\mL)$ for $\mL$ a line bundle on $X$. The $1$-connective cover $\tau_{\ge 1}(\sO_X)$ is by definition $1$-connective. Since any line bundle $\sL$ is always flat and hence connective (because we are assuming the same for $X$), it follows that an equivalence of quasi-coherent sheaves $\tau_{\ge 1}(\sO_X)\simeq \Sigma^n(\sL)$ is possible only for $n\ge 1$. A (non-empty) $n$-periodic stack will therefore always have period $n\ge 1$. Conversely, each connected component of a periodic spectral Deligne-Mumford stack is $1$-periodic for some $n\ge 0$ by \cite[Remark 2.9.5.8]{SAG}. In particular, any periodic spectral stack is fpqc-locally $n$-periodic for some (collection of) positive integers $n\ge 1$.
\end{remark}

For $X$ a periodic spectral stack, let us denote by $\mX := X- X^\heart$ the complement of the extended effective Cartier divisor $X^\heart\to X$. Equivalently, $\mX\simeq D_X(\beta)$ is the basic open associated to the quasi-coherent sheaf map $\beta : \tau_{\ge 1}(\sO_X)\to \sO_X$. In particular, we have an identification $\mX\simeq\Spec_X(\sO_X[\beta^{-1}])$, and as such there is a canonical affine map of non-connective spectral stacks $\mX\to X$.

\begin{lemma}\label{periodic homotopy inverted}
The map $\mX \to X$ exhibits an equivalence of spectral stacks $X\simeq \tau_{\ge 0}(\mX)$.
\end{lemma}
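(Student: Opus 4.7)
The plan is to reduce to an fpqc-local affine calculation and then use a short long-exact-sequence argument. Since $\mX \simeq \Spec_X(\sO_X[\beta^{-1}])$ is affine over $X$, its connective cover is computed affine-locally by applying the $\E$-ring connective cover $A \mapsto \tau_{\ge 0}(A)$ (cf.\ \cite[Corollary 1.3.7]{ChromaticCartoon}). The universal property of the connective cover factors the tautological affine map $\mX \to X$ as $\mX \to \tau_{\ge 0}(\mX) \to X$, and it suffices to show the second arrow is an equivalence after fpqc base change on $X$. Using the preceding remark that any periodic spectral stack is fpqc-locally $n$-periodic, together with a further trivialization of the resulting line bundle, we may pass to a cover on which $X$ becomes affine $\Spec(A)$ with $\tau_{\ge 1}(A) \simeq \Sigma^n(A)$ for some $n \ge 1$.

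Under this trivialization, $\beta$ becomes an element of $\pi_n(A)$ and the periodicity cofiber sequence reads
$$
\Sigma^n(A) \xrightarrow{\beta} A \to \pi_0(A).
$$
The associated long exact sequence in homotopy
$$
\cdots \to \pi_{k+n+1}(\pi_0(A)) \to \pi_k(A) \xrightarrow{\beta} \pi_{k+n}(A) \to \pi_{k+n}(\pi_0(A)) \to \cdots
$$
has its outer terms vanishing whenever $k+n > 0$, since $\pi_0(A)$ is discrete. Hence multiplication by $\beta$ is an isomorphism $\pi_k(A) \xrightarrow{\simeq} \pi_{k+n}(A)$ for every $k \ge 0$. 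Consequently
$$
\pi_k\bigl(A[\beta^{-1}]\bigr) \simeq \colim\bigl(\pi_k(A) \xrightarrow{\beta} \pi_{k+n}(A) \xrightarrow{\beta} \cdots\bigr) \simeq \pi_k(A)
$$
for every $k \ge 0$, which is precisely the assertion $A \simeq \tau_{\ge 0}(A[\beta^{-1}])$. Gluing via fpqc descent for connective covers then upgrades this to the desired global equivalence $\tau_{\ge 0}(\mX) \simeq X$.

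The main subtle point is the homotopy-group computation, and it is worth emphasizing that periodicity is exactly what makes it work: invertibility of $\tau_{\ge 1}(\sO_X)$ is what forces the cofiber $\sO_X/\beta$ to be discrete, causing the long exact sequence to collapse into genuine isomorphisms on non-negative homotopy groups. Without the periodicity hypothesis, the localization $A[\beta^{-1}]$ could acquire connective contributions not already present in $A$, and the conclusion would fail.
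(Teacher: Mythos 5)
Your proof is correct and follows essentially the same strategy as the paper's: reduce fpqc-locally to the $n$-periodic case, then exploit the fact that the cofiber of $\beta$ is the discrete sheaf $\pi_0$ to conclude that $\beta$-inversion does not change nonnegative homotopy. The only cosmetic differences are that you pass all the way down to an affine chart with trivialized line bundle and argue directly on homotopy groups via the long exact sequence, whereas the paper works sheaf-theoretically on the $n$-periodic stack (keeping $\mL$) and isolates the fiber of a single step $\sO_X\to\tau_{\ge 1}(\sO_X)^{\o -1}$; and you perform the filtered colimit after localizing, which sidesteps the appeal to compatibility of the $t$-structure on $\QCoh(X)$ with filtered colimits (\cite[Corollary 9.1.3.2]{SAG}) that the paper invokes to handle the colimit globally first.
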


\begin{proof}
We must show that the map of quasi-coherent sheaves $\sO_X\to \sO_X[\beta^{-1}]$ induces an equivalence on connective covers. We claim that it suffices that the map $\sO_X\to \tau_{\ge 1}(\sO_X)^{\o -1}$, induced from $\beta$ by smashing with $\tau_{\ge 1}(\sO_X)^{\o -1}$, inducing an equivalence on connective covers. Indeed, if this holds, the same holds by induction for $\sO_X\to \tau_{\ge 0}(\sO_X)^{\o -n}$ for all $n\ge 0$, and since the $t$-structure on $\QCoh(X)$ on a geometric spectral stack $X$ is compatible with filtered colimits by \cite[Corollary 9.1.3.2]{SAG}, i.e.\ the functor $\tau_{\ge 0} : \QCoh(X)\to\tau_{\le 0}(\QCoh(X))$ commutes with them, we obtain the desired result by passing to $n\to\infty$ along $\varinjlim_n (\tau_{\ge 1}(\sO))^{\o -n}\simeq \sO_X[\beta^{-1}]$.

Thus we need to study the map $\sO_X\to \tau_{\ge 1}(\sO_X)^{\o -1}$. Since the connective cover is a right adjoint as such preserves limits, it follows that 
$$
\tau_{\ge 0}(\mathrm{fib}(\sO_X\to \tau_{\ge 1}(\sO_X)^{\o -1}))\to \tau_{\ge 0}(\sO_X)\to \tau_{\ge 0}(\tau_{\ge 1}(\sO_X)^{\o -1})
$$
is a fiber sequence in the prestable $\i$-category $\QCoh(X)^\mathrm{cn}$. It suffices to show that the left-most term is the zero object. Since that is (as are connective covers as well) a local question on $X$ for the fpqc topology, we may assume that $X$ is $n$-periodic. That means that $\tau_{\ge 1}(\sO_X)\simeq \Sigma^n(\mL)$ for some line bundle $\mL$ on $X$.  Using flatness of $\mL$, we obtain
\begin{eqnarray*}
\mathrm{fib}(\sO_X\to \tau_{\ge 1}(\sO_X)) &\simeq &\Sigma^{-1} (\mathrm{cofib}(\sO_X\to \tau_{\ge 1}(\sO_X)))\\
&\simeq &
\Sigma^{-1}(\mathrm{cofib}(\beta)\o_{\sO_X}\tau_{\ge 1}(\sO_X)^{\o -1})\\
&\simeq &\Sigma^{-n-1}(\pi_0(\sO_X)\o_{\sO_X}\mL^{\o -1})\\
&\simeq & \Sigma^{-n-1}(\pi_0(\mL)^{\o -1}),
\end{eqnarray*}
which is certainly concentrated in homotopical degrees $\le -(n+1) \le -1$, and so vanishes upon applying the connective cover functor $\tau_{\ge 0}$.
\end{proof}

\begin{corollary}\label{Cor heart per}
The map $\mX\to X$ induces an equivalence $\mX^\heart\simeq X^\heart$ upon underlying ordinary stacks.
\end{corollary}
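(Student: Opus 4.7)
The plan is to derive the corollary as a formal consequence of the preceding Lemma \ref{periodic homotopy inverted}, which establishes that the canonical affine map $\mX \to X$ exhibits $X$ as the connective cover $\tau_{\ge 0}(\mX)$.

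The key observation is that, for any non-connective spectral stack $Y$, the underlying ordinary stack $Y^\heart$ is insensitive to the connective cover, in the sense that the canonical map $\tau_{\ge 0}(Y) \to Y$ induces an equivalence $(\tau_{\ge 0}(Y))^\heart \simeq Y^\heart$. This holds because $(-)^\heart$ is defined affine-locally by $\pi_0$ of the corresponding $\E$-ring of sections, and for any $\E$-ring $A$ the structural map $\tau_{\ge 0}(A) \to A$ is by construction an isomorphism on $\pi_0$.

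Applying this to $Y = \mX$ and combining with Lemma \ref{periodic homotopy inverted}, we get the canonical chain of equivalences $\mX^\heart \simeq (\tau_{\ge 0}(\mX))^\heart \simeq X^\heart$, all compatible with the map $\mX \to X$. I expect no genuine obstacle here; the corollary is essentially just the preceding lemma, transported across $(-)^\heart$, and the only point worth making explicit in the write-up is that the two equivalences $\pi_0$-detects-connective-cover and $\tau_{\ge 0}(\mX) \simeq X$ compose to produce an equivalence induced by the original map $\mX \to X$ itself, rather than by some auxiliary morphism.
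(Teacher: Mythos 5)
Your argument is correct and matches the paper's implicit reasoning: the corollary is stated without proof precisely because it is immediate from Lemma \ref{periodic homotopy inverted} together with the fact that $(-)^\heart$ depends only on $\pi_0$ of the structure sheaf, which the connective cover functor leaves unchanged. The only small slip is directional: the connective cover map of spectral stacks runs $Y \to \tau_{\ge 0}(Y)$ (dual, under the contravariant $\Spec$, to the $\E$-ring map $\tau_{\ge 0}(A)\to A$), not $\tau_{\ge 0}(Y)\to Y$, but this does not affect the substance of your argument.
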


\begin{remark}
It is not too difficult to show that a geometric non-connective spectral stack $\mX$ is of the form $\mX\simeq D_X(\beta)$ for an $n$-periodic spectral stack $X$ (which will then be its connective cover) if and only if there exists a line bundle $\mL$ on $\mX$ such that $\sO_\mX\simeq \Sigma^n(\mL)$. In that case, we have on the level of homotopy sheaves isomorphisms of graded $\sO_{X^\heart}$-algebras $\pi_*(\sO_X)\simeq \sO_{X^\heart}[\beta]$ and $\pi_*(\sO_\mX)\simeq \sO_{X^\heart}[\beta^{\pm 1}]$ for a generator $\beta$ in degree $n$.
\end{remark}

Tying together all the preceding discussion, we obtain the main result of this Section.

\begin{theorem}\label{defeorem}
Let $\mX$ be a non-connective geometric spectral stack, such that  $\tau_{\ge 0}(\mX)$ is a periodic spectral stack, and $\mX =\tau_{\ge 0}(\mX) - \mX^\heart  $. There exists a canonical diagram of non-connective spectral stacks
$$
\begin{tikzcd}
\mX\ar{r} \ar{d} & \tau_{\ge 0}(\mX)\ar{d} & \mX^\heart \ar{l}\ar{d}\\
 \Spec(S)\ar{r} & 
\textup{$\mathrm{CDiv}_\mathrm{eff}^\dagger$}
  & 
  D_\mathrm{univ}
  \ar{l},
\end{tikzcd}
$$
in which the top row consists of the usual connective cover and underlying ordinary stack maps, and
 both squares are pullback squares. 
\end{theorem}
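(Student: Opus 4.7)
The plan is to build the classifying map $\tau_{\ge 0}(\mX) \to \mathrm{CDiv}_\mathrm{eff}^\dagger$ from the periodicity hypothesis, and then verify both squares as pullbacks by applying the universal property of the moduli of extended effective Cartier divisors. First, by the periodicity assumption on $\tau_{\ge 0}(\mX)$, the canonical closed immersion of the underlying ordinary stack
$$
\mX^\heart \simeq (\tau_{\ge 0}(\mX))^\heart \hookrightarrow \tau_{\ge 0}(\mX)
$$
(where the first equivalence is Corollary \ref{Cor heart per}) is an extended effective Cartier divisor, with ideal sheaf the invertible quasi-coherent sheaf $\tau_{\ge 1}(\sO_{\tau_{\ge 0}(\mX)})$. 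By the universal property of $\mathrm{CDiv}_\mathrm{eff}^\dagger$, this datum is equivalent to a canonical classifying map $\tau_{\ge 0}(\mX) \to \mathrm{CDiv}_\mathrm{eff}^\dagger$, which is the chosen middle vertical map in the diagram.

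The right-hand square is then a pullback directly from the universal property of $D_\mathrm{univ}$: every extended effective Cartier divisor on a non-connective spectral stack is, by construction, the pullback of $D_\mathrm{univ}$ along its classifying map. For the left-hand square, apply the analogous universal property for the complement of the universal divisor, so that
$$
\tau_{\ge 0}(\mX) \times_{\mathrm{CDiv}_\mathrm{eff}^\dagger} \bigl(\mathrm{CDiv}_\mathrm{eff}^\dagger - D_\mathrm{univ}\bigr) \simeq \tau_{\ge 0}(\mX) - \mX^\heart,
$$
which by hypothesis is $\mX$. It remains to identify $\mathrm{CDiv}_\mathrm{eff}^\dagger - D_\mathrm{univ}$ with $\Spec(S)$ in a way that produces precisely the chosen bottom map $\Spec(S) \to \mathrm{CDiv}_\mathrm{eff}^\dagger$ in the diagram.

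For this identification, unwind definitions: an $A$-point of $\mathrm{CDiv}_\mathrm{eff}^\dagger - D_\mathrm{univ}$ is an extended effective Cartier divisor $(\sL \xrightarrow{s} \sO_{\Spec(A)})$ for which $s$ has been inverted via the basic affine construction of Subsection \ref{subsection D}, i.e., is already an equivalence. But then the cofiber $\sO_D \simeq \mathrm{cofib}(s)$ vanishes, forcing $D = \emptyset$, and the space of pairs $(\sL, s)$ with $s$ an equivalence of invertible sheaves is contractible. This gives the required equivalence $\mathrm{CDiv}_\mathrm{eff}^\dagger - D_\mathrm{univ} \simeq \Spec(S)$, and the left-hand square is then a pullback by the preceding calculation. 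I expect this final identification to be the main (if mild) obstacle: one must carefully distinguish the condition of locally inverting the ideal sheaf map from literally requiring it to be an equivalence, but since inversion of an invertible sheaf map is itself representable by the associated basic affine, these conditions coincide.
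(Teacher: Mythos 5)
Your argument is correct and matches the route the paper leaves implicit: Theorem \ref{defeorem} is stated without a written proof, as ``tying together the preceding discussion,'' meaning the Construction of $D_\mathrm{univ}$ and its complement (which already gives both pullback squares with $\mathrm{CDiv}_\mathrm{eff}^\dagger - D_\mathrm{univ}$ in the bottom left), Lemma \ref{periodic homotopy inverted}, and Corollary \ref{Cor heart per}. Your only new ingredient is the explicit identification $\mathrm{CDiv}_\mathrm{eff}^\dagger - D_\mathrm{univ}\simeq \Spec(S)$, and your treatment of it is right: since a basic affine $D_X(f)$ is $\Spec_X$ of a localization, and localizations are idempotent, the space of $A$-algebra sections of $A[(g^*f)^{-1}]$ is empty unless $g^*f$ is already an equivalence and contractible when it is, so an $A$-point of the universal complement is exactly an extended effective Cartier divisor whose ideal sheaf map is an equivalence, i.e.\ $B\simeq 0$ with the trivialization data forming a contractible space. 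The worry you flag about ``locally inverting'' versus ``already an equivalence'' is thus correctly dissolved by the universal property of localization.
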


In particular, the map $\mX\to\tau_{\ge 0}(\mX)$ may be identified with the `generic fiber', i.e.\ with $\beta$ invertible, while $\mX^\heart\to\tau_{\ge 0}(\mX)$ may be viewed as the `special fiber', i.e.\ with $\beta = 0$.
Since all the morphisms in sight are affine, this induces, together with Proposition \ref{QC is loc}, on the level of quasi-coherent sheaves, equivalences of $\i$-categories
\begin{equation}\label{sheaf level cartoon}
\QCoh(\mX)\simeq \QCoh(\tau_{\ge 0}(\mX))^{\mathrm{Loc}(\beta)},\qquad \QCoh(\mX^\heart)\simeq \Mod_{\sO_{\tau_{\ge 0}(\mX)}/\beta}(\QCoh(\tau_{\ge 0}(\mX))).
\end{equation}
That is (a quasi-coherent version of) the main $\tau$-deformation picture, as obtained by applying it to $\mX = \M$ (and passing to ind-coherent sheaves) in Section \ref{Section 1} above.

\begin{exun}
The affine non-connective spectral schemes, corresponding to the topological complex $K$-theory spectrum $\mathrm{KU}$ and its connective cover $\mathrm{ku}=\tau_{\ge 0}(\mathrm{KU})$, also fit into this paradigm. The underlying ordinary scheme map $\Spec(\mathbf Z)\to\Spec(\mathrm{ku})$ is an extended effective Cartier divisor, with the connected cover map $\Spec(\mathrm{KU})\to\mathrm{Spec}(\mathrm{ku})$ for its complement.
\end{exun}

\begin{remark}\label{Remark problems 2}
It may seem tempting to try to give a different   stack-level interpretation of the deformation picture, as
 manifests on the level of the $\i$-categories of quasi-coherent sheaves in \eqref{sheaf level cartoon} (or on the level of ind-coherent sheaves in Section \ref{section 6}), than the one provided by Theorem \ref{defeorem}. Such an interpretation might use the \textit{flat affine line} $\mathbf A^1_\flat = \Spec (S[t])$ and the multiplicative group $\G_m\subseteq \A^1_\flat$, corresponding to the polynomial $\E$-ring $S[t]:=S[\mathbf Z_{\ge 0}]$  (as opposed to the differentially smooth affine line $\A^1$ and the general linear group $\GL_1\subseteq \A^1$, based on the free $\E$-ring $S\{t\}\simeq S[\Omega^\i(S)]$ that featured in Lemma  \ref{nogolemma}). The main reason this may seem attractive, is the 
 standard equivalence of $\i$-categories with filtered spectra $\QCoh( \mathbf A_\flat^1/\mathbf G_m)\simeq \Sp^\mathrm{fil}$, proved in this context in \cite[Theorem 1.1]{Moulinos}. That might elucidate the central, albeit auxiliary, appearance of formal spectra throughout Section \ref{Section 1}.
 There are serious problems with this idea, however. In order to accommodate the example of  the spectral stacks $\tau_{\ge 0}(\M)$,  we would need to consider deformation diagrams over something akin to the quotient stack $\mathbf A_\flat^1[2]/\mathbf G_m$. The trouble arises in trying to make sense of
 the ``$2$-shifted flat affine line'' $\mathbf A_\flat^1[2]$. It should be something like $\Spec (S[\beta])$, where $S[\beta]$ is the graded ring spectrum with underlying graded spectrum $S[\beta]\simeq \bigoplus_{n \ge 0}\Sigma^{-2n}(S)$, as constructed in \cite[Section 3.4]{Lurie Rotation}. The issue is that $S[\beta]$ is only an $\mathbb E_2$-ring, instead of $\E$.  Due to the inherent difficulties of $\mathbb E_2$-spectral algebraic geometry via the `functor of points' approach\footnote{See however \cite{Francis} for a discussion of $\mathbb E_n$-spectral algebraic geometry, based on the \'etale topology and locally ringed $\i$-topoi.} , stemming from  pullbacks of affine $\mathbb E_2$-spectral schemes no longer being computed by relative smash products, we do not try pursuing this line of inquiry here. 
\end{remark}

\begin{remark}
Given the $\mathbb E_2$-difficulties we drew attention to in the previous Remark,
 one might wonder how come we did not encounter any of them so far. The reason is that the subtleties concerting the difference between the $\mathbb E_2$-spectral stacks (in some sense of what that should mean) $\mathbf A_\flat^1/\mathbf G_m$ and $\mathbf A_\flat^1[2]/\mathbf G_m$ may to an extent be circumvented on the level of the $\i$-categories of quasi-coherent (and consequently ind-coherent) sheaves. This is due to the $\i$-category of graded spectra $\mathrm{Sp}^\mathrm{gr}$ admitting an $\mathbb E_2$-monoidal self-equivalence $\fatslash : \Sp^\mathrm{gr}\simeq \Sp^\mathrm{gr}$ called \textit{shearing}, and given by
$
 X_*\mapsto \Sigma^{-2*}(X_*).
$
It exchanges the polynomial $\E$-ring $S[t]$, equipped with the usual grading, and the graded $\mathbb E_2$-ring $S[\beta]$. As such, shearing induces an equivalence of monoidal $\i$-categories
$$
\QCoh(\mathbf A_\flat^1/\mathbf G_m)\simeq \Sp^\mathrm{fil}\simeq \Mod_{S[t]}(\Sp^\mathrm{gr})\overset{\fatslash}{\simeq}\Mod_{S[\beta]}(\Sp^\mathrm{gr})\simeq \QCoh(\mathbf A_\flat^1[2]/\mathbf G_m),
$$
allowing us to pass between modules over $\QCoh(\mathbf A_\flat^1/\mathbf G_m)$ and $\QCoh(\mathbf A_\flat^1[2]/\mathbf G_m)$, by viewing both as filtered spectra. This is implicitly used in Section \ref{Section 1}, since the graded object $\Sigma^{2*}(\omega^{\o *})$, with its $S[t]$-module structure corresponding to the filtration, shears to the more traditional graded object $\omega^{\o *}$, encoding to the (strict) line bundle $\omega$ on the spectral stack $\tau_{\ge 0}(\M)$, equipped with a corresponding $S[\beta]$-module structure.
\end{remark}


\begin{thebibliography}{99}

\bibitem[BKWX21]{Chow t}
T.~Bachmann, H.~J.~Kong, G.~Wang, Z.~Xu
\textit{The Chow t-structure on the $\i$-category of motivic spectra},
version January 25, 2021.
Available from
\href{https://arxiv.org/abs/2012.02687}{arXiv:2012.02687 [math.KT]}


\bibitem[BHV18]{Bartel Heard Valenzuela}
T.~Barthel, D.~Heard, G.~Valenzuela,
\textit{Local duality in algebra and topology},
Advances in Mathematics,
Volume 335, 7 September 2018, Pages 563-663.
Preprint available from
\href{https://arxiv.org/abs/1511.03526}{arXiv:1511.03526 [math.AT]}


\bibitem[BHSZ21]{Transchromatic}
A.~Beaudry, M.~A.~Hill, X.~D.~Shi, M.~Zeng,
\textit{Transchromatic extensions in motivic and Real bordism},
version September 14, 2021.
Available from
\href{https://arxiv.org/abs/2005.10888}{arXiv:2005.10888 [math.AT]}


\bibitem[BHS19]{BHS:Manifolds}
R.~Burklund, J.~Hahn, and A.~Senger. 
\textit{On the boundaries of highly connected, almost closed manifolds},
version October 30, 2019.
Available from
\href{https://arxiv.org/abs/1910.14116}{arXiv:1910.14116 [math.AT]}


\bibitem[BHS20a]{BHS:Manifolds2}
R.~Burklund, J.~Hahn, and A.~Senger. 
\textit{Inertia groups in the metastable range},
version October 19, 2020.
Available from
\href{https://arxiv.org/abs/2010.09869}{arXiv:2010.09869 [math.GT]}


\bibitem[BHS20b]{BHS}
R.~Burklund, J.~Hahn, A.~Senger,
\textit{Galois reconstruction of Artin-Tate $\mathbb R$-motivic spectra},
version October 20, 2020.
Available from
\href{https://arxiv.org/abs/2010.10325}{arXiv:2010.10325 [math.AT]}

\bibitem[CK20]{Cisinski-Khan}
D.-C.~Cisinski, A.~Khan,
\textit{$\mathbf A^1$-homotopy invariance in spectral algebraic geometry},
version October 15, 2020.
Available from
\href{https://arxiv.org/abs/1705.03340}{arXiv:1705.03340 [math.AT]}


\bibitem[DI05]{Dugger-Isaksen}
D.~Dugger, D.~Isaksenm
\textit{Motivic cell structures},
Algebr. Geom. Topol. 5(2): 615-652 (2005). DOI: 10.2140/agt.2005.5.615.
Preprint available from
\href{https://arxiv.org/abs/math/0310190}{arXiv:math/0310190 [math.AT]}


\bibitem[DI10]{DI}
D.~Dugger, D.~Isaksen,
\textit{The motivic Adams spectral sequence}.
Geom. Topol.,14(2):967–1014, 2010.
Preprint available from
\href{https://arxiv.org/abs/0901.1632}{arXiv:0901.1632 [math.AT]}


\bibitem[Fra08]{Francis}
J.~Francis,
\textit{Derived Algebraic Geometry Over $\CMcal E_n$-Rings}.
MIT PhD  thesis, slighly revised, 2008.
Available from
\url{https://sites.math.northwestern.edu/~jnkf/writ/thezrev.pdf}


\bibitem[GR17]{GaRo}
D.~Gaitsgory, N.~Rozenblyum
\textit{A Study in Derived Algebraic Geometry: Volumes I and II}.
Mathematical Surveys and Monographs
Volume: 221; 2017.
Available from
\url{http://people.math.harvard.edu/~gaitsgde/GL/Vol1.pdf} and \url{http://people.math.harvard.edu/~gaitsgde/GL/Vol2.pdf}


\bibitem[GS07]{Gepner-Snaith}
D.~Gepner, V.~Snaith,
\textit{On the motivic spectra representing algebraic cobordism and algebraic K-theory}.
December 2007
Documenta mathematica Journal der Deutschen Mathematiker-Vereinigung 14.
Preprint available from
\href{https://arxiv.org/abs/0712.2817}{arXiv:0712.2817 [math.AG]}


\bibitem[Ghe17]{Gheorghe: Exotic}
B.~Gheorghe,
\textit{Exotic motivic periodicities},
version September 4, 2017.
Available from
\href{https://arxiv.org/abs/1709.00915}{arXiv:1709.00915 [math.AT]}


\bibitem[Ghe18]{Gheorghe} 
 B.~Gheorghe.
 \textit{The motivic cofiber of $\tau$}.
 Doc. Math., 23:1077–1127, 2018.
 Preprint available from
\href{https://arxiv.org/abs/1701.04877}{arXiv:1701.04877 [math.AT]}


\bibitem[GI16]{GI16}
B.~Gheorghe, D.~C. Isaksen,
\textit{The structure of motivic homotopy groups},
Boletin de la Sociedad Matem\'atica Mexicana (2016), 1–9.
Preprint available from
\href{https://arxiv.org/abs/1505.01476}{arXiv:1505.01476 [math.AT]}


\bibitem[GWX21]{GWX}
B.~Gheorghe, G.~Wang, Z.~Xu,
\textit{The special fiber of the motivic deformation of the stable homotopy category is algebraic}.
Acta Mathematica
Volume 226 (2021)
Number 2.
Preprint available from
\href{https://arxiv.org/abs/1809.09290}{arXiv:1809.09290 [math.AT]}


\bibitem[GIKR18]{GIKR}
B.~Gheorghe, D.~C.~Isaksen, A.~Krause, N.~Ricka,
\textit{$\mathbb C$-motivic modular forms},
version October 25, 2018.
Available from
\href{https://arxiv.org/abs/1810.11050}{arXiv:1810.11050 [math.AT]}


\bibitem[Gre21a]{ChromaticCartoon}
R.~Gregoric,
\textit{Moduli stack of oriented formal groups and periodic complex bordism},
version November 30, 2021.
Available from
\href{https://arxiv.org/abs/2107.08657}{arXiv:2107.08657 [math.AG]}


\bibitem[Gre21b]{ChromaticFiltration}
R.~Gregoric,
\textit{Moduli stack of oriented formal groups and the chromatic filtration},
version November 30, 2021.
Available from
\href{https://arxiv.org/abs/2111.15202}{arXiv:2111.15202 [math.AT]}


\bibitem[HY19]{Hahn-Yuan}
J.~Hahn, A.~Yuan,
\textit{Exotic Multiplications on Periodic Complex Bordism}, 
version April 30, 2019. To appear in Journal of Topology.
Preprint available from
\href{https://arxiv.org/abs/1905.00072}{arXiv:1905.00072 [math.AT]}


\bibitem[Hor14]{Hornbostel: Preorientations}
J.~Hornbostel,
\textit{Preorientations of the derived motivic multiplicative group},
Algebr. Geom. Topol. 13(5): 2667-2712 (2013).
Erratum Algebr. Geom. Topol. 18(5): 1257–1258 (2018).
Preprint available from
\href{https://arxiv.org/abs/1005.4546}{arXiv:1005.4546 [math.KT]}


\bibitem[Hor06]{Hornbostel: Localizations}
J.~Hornbostel,
\textit{Localizations in motivic homotopy theory},
Math. Proc. of the Cambridge Phil. Soc. 140 (2006), 95-114.


\bibitem[Hoy15]{Hoyois}
M.~Hoyois,
\textit{From algebraic cobordism to motivic cohomology},
J. reine angew. Math. 702, 2015, 173-226.
Preprint available from
\href{https://arxiv.org/abs/1210.7182}{arXiv:1210.7182 [math.AG]}


\bibitem[HKO11]{HKO11}
P.~Hu, I.~Kriz, and K.~Ormsby,
\textit{Remarks on motivic homotopy theory over algebraically closed fields}, 
J. K-Theory 7 (2011), no. 1, 55–89. MR 2774158 (2012b:14040).


\bibitem[Isa19]{Stable stems}
D.~ Isaksen.
\textit{Stable stems}.
Mem. Amer. Math. Soc., 262(1269):viii+159, 2019.
Preprint available from
\href{https://arxiv.org/abs/1407.8418}{arXiv:1407.8418 [math.AT]}


\bibitem[IWX20]{More stable stems}
D.~Isaksen, G.~Wang, Z.~Xu,
\textit{More stable stems},
version June 17, 2020.
Available from
\href{https://arxiv.org/abs/2001.04511}{arXiv:2001.04511 [math.AT]}


\bibitem[Joa15]{Joachimi}
R.~Joachimi,
\textit{Thick ideals in equivariant and motivic stable homotopy categories},
chapter in book Bousfield Classes and Ohkawa's Theorem (pp.109-219).
Preprint available from
\href{https://arxiv.org/abs/1503.08456}{arXiv:1503.08456 [math.AT]}


\bibitem[KR19]{Khan}
A.~A.~Khan, D.~Rydh,
\textit{Virtual Cartier divisors and blow-ups},
version March 16, 2019,
Available from
\href{https://arxiv.org/abs/1802.05702}{arXiv:1802.05702 [math.AG]}



\bibitem[Kra18]{Krause} 
A.~Krause,
\textit{Periodicity in motivic homotopy theory and over $\mathrm{BP}_*\mathrm{BP}$},
thesis (2018), Bonn University.
Available from
\url{https://uni-muenster.sciebo.de/s/Fe96rDK6WncjrHx/download?path=/files&files=thesis_krause.pdf}


\bibitem[Lev14]{Levine}
M.~Levine,
\textit{A comparison of motivic and classical stable homotopy theories}.
J. Topol. 7(2), 327–362 (2014).
Preprint available from
\href{https://arxiv.org/abs/1201.0283}{arXiv:1201.0283 [math.AG]}


\bibitem[Lev15]{Levine2}
M.~Levine, et al., 
\textit{The Adams-Novikov spectral sequence and Voevodsky’s slice tower},
Geometry \&
Topology 19 (2015), no. 5, 2691–2740.
Preptint available from
\href{https://arxiv.org/abs/1311.4179}{arXiv:1311.4179 [math.AG]}


\bibitem[Lur09]{survey}
J.~Lurie,
\textit{A Survey of Elliptic Cohomology},
in: Baas N., Friedlander E., Jahren B., Østvær P. (eds) Algebraic Topology. Abel Symposia, vol 4. Springer, Berlin, Heidelberg. 
Preprint available from
\url{https://www.math.ias.edu/~lurie/papers/survey.pdf}


\bibitem[Lur15]{Lurie Rotation}
J.~Lurie,
\textit{Rotation Invariance in Algebraic K-Theory}.
Version September 2015.
Available from
\url{https://www.math.ias.edu/~lurie/papers/Waldhaus.pdf}


\bibitem[HA]{HA}
J.~Lurie,
\textit{Higher Algebra},
version September 2017.
Available from
\url{https://www.math.ias.edu/~lurie/papers/HA.pdf}


\bibitem[SAG]{SAG}
J.~Lurie,
\textit{Spectral Algebraic Geometry},
version February 2018.
Available from
\url{http://www.math.ias.edu/~lurie/papers/SAG-rootfile.pdf}


\bibitem[Ell1]{Elliptic 1}
J.~Lurie,
\textit{Elliptic Cohomology I:  Spectral Abelian Varieties},
version February 3, 2018.
Available from
\url{https://www.math.ias.edu/~lurie/papers/Elliptic-I.pdf}


\bibitem[Ell2]{Elliptic 2}
J.~Lurie,
\textit{Elliptic Cohomology II: Orientations},
version April 26, 2018.
Available from
\url{https://www.math.ias.edu/~lurie/papers/Elliptic-II.pdf}


\bibitem[MS18]{Mathew}
A.~Mathew, V.~Stojanoska,
\textit{The Picard group of topological modular forms via descent theory},
Geom. Topol. 20 (2016), no. 6, 3133--3217.
Preprint available from
\href{https://arxiv.org/abs/1409.7702}{arXiv:1409.7702 [math.AT]}


\bibitem[MG19]{AMG}
A.~Mazel-Gee,
\textit{$\mathbb E_\infty$ automorphisms of motivic Morava $E$-theories},
version Januray 17, 2019.
Available from
\href{https://arxiv.org/abs/1901.05713}{arXiv:1901.05713 [math.KT]}


\bibitem[Mor99]{Morel}
F.~Morel,
\textit{Th\'eorie homotopique des sch\'emas},
Ast\'erisque 256 (1999), vi+119.


\bibitem[MV99]{Morel-Voevodsky}
F.~Morel, V.~Voevodsky,
\textit{$\mathbf A^1$-homotopy theory of schemes},
Publications Math\'ematiques de l'IH\'ES, 90 (90): 45–143, December 1999.


\bibitem[Mou21]{Moulinos}
T.~Moulinos,
\textit{The geometry of filtrations},
version July 1, 2021.
Available from
\href{https://arxiv.org/abs/1907.13562}{arXiv:1907.13562 [math.AT]}


\bibitem[NØS09]{Motivic Landweber}
N.~Naumann, P.~A.~Østvær, M.~Spitzweck
\textit{Motivic Landweber Exactness},
Doc. Math.14(2009), 551–593.
Preprint available from
\href{https://arxiv.org/abs/0806.0274}{arXiv:0806.0274 [math.AG]}


\bibitem[Ols16]{Olsson}
M.~Olsson,
\textit{Algebraic spaces and stacks},
volume 62. American Mathematical Soc.,2016.


\bibitem[PP21]{PaPs}
I.~Patchkoria, P.~Pstrągowski,
\textit{Adams spectral sequences and Franke's algebraicity conjecture},
version October 7, 2021.
Available from \href{https://arxiv.org/abs/2110.03669}{arXiv:2110.03669 [math.AT]}


\bibitem[Pst18]{Pstragowski}
P.~Pstrągowski,
\textit{Synthetic spectra and the cellular motivic category}, 
 version March 5, 2018.
Available from 
\href{https://arxiv.org/abs/1803.01804}{arXiv:1803.01804 [math.AT]}


\bibitem[Pst21]{Pstragowski2}
P.~Pstrągowski,
\textit{Chromatic homotopy is algebraic when  $p>n^2+n+1$},
Advances in Mathematics, Volume 391, 2021, 107958, ISSN 0001-8708.
Preprint available from
\href{https://arxiv.org/abs/1810.12250}{arXiv:1810.12250 [math.AT]}


\bibitem[PV21]{Pstragowski on GH}
P.~Pstrągowski, P.~VanKoughnett,
\textit{Abstract Goerss-Hopkins theory},
Advances in Mathematics, Volume 393, 2021, 108098, ISSN 0001-8708.
Preprint available from 
\href{https://arxiv.org/abs/1904.08881}{arXiv:1904.08881 [math.AT]}


\bibitem[Stacks]{stacks-project}
The {Stacks Project Authors},
\textit{Stacks Project},
2018.
Available from
\url{https://stacks.math.columbia.edu}


\bibitem[Sta21a]{Sta16} 
S.-T.~Stahn,
\textit{The motivic Adams-Novikov spectral sequence at odd primes over $\mathbb C$ and $\mathbb R$},
version January 22, 2021.
Available from
\href{https://arxiv.org/abs/1606.06085}{arXiv:1606.06085 [math.AT]}


\bibitem[Sta21b]{Sta2}
S.-T.~Stahn,
\textit{Periodic self maps and thick ideals in the stable motivic homotopy category over $\mathbb C$  at odd primes},
version January 22, 2021
Available from
\href{https://arxiv.org/abs/2101.09229}{arXiv:2101.09229 [math.AT]}


\bibitem[Voe98]{Voevodsky1}
V.~Voevodsky,
\textit{$\mathbf A^1$-homotopy theory},
Proceedings of the International Congress of Mathematicians, Vol. I (Berlin, 1998), 1998, pp. 579-604.


\bibitem[Voe02]{Voevodsky}
V.~Voevodsky, \textit{Open problems in the motivic stable homotopy theory}.  I.Motives, polylogarithms and Hodge theory, Part I (Irvine, CA, 1998), 3–34, Int. Press Lect. Ser.,3, I, Int.Press, Somerville, MA, 2002.

\bibitem[Voe03]{VoeK1}
V.~Voevodsky,
\textit{Motivic cohomology with $\mathbf Z/2$-coefficients},
Publ. Math. Inst. Hautes \'EtudesSci. 98 (2003), 59–104.

\bibitem[Voe11]{VoeK2}
V.~Voevodsky,
\textit{On motivic cohomology with $\mathbf Z/l$-coefficients},
Ann. of Math. (2) 174 (2011),no. 1, 401–438.

\end{thebibliography}
\end{document}